\documentclass[a4paper, 11pt]{amsart}
\pdfoutput=1 %

\usepackage{microtype}
\usepackage[margin=2.5cm]{geometry}

\usepackage{tikz}
\usetikzlibrary{perspective, positioning, cd, decorations.pathmorphing, calc, intersections, backgrounds, fit}

\usepackage{enumitem}

\usepackage{caption}
\usepackage{wrapfig}

\usepackage{bm}
\usepackage{centernot}
\usepackage{dsfont}
\usepackage{xparse}
\renewcommand{\mathbb}[1]{\mathds{#1}}

\usepackage{sansmath}

\newcommand{\ZZ}{\mathbb Z}
\newcommand{\QQ}{\mathbb Q}

\NewDocumentCommand{\FF}{e{_}}{\mathbb F\IfValueT{#1}{_{\mkern-3mu#1}}}
\newcommand{\card}[1]{\lvert#1\rvert}
\newcommand{\ol}[1]{\overline{#1}}
\NewDocumentCommand{\Laurent}{mm}{#1(\mkern-3mu(#2)\mkern-3mu)}
\NewDocumentCommand{\Affine}{e{_}}{\mathbb A_{#1}}

\newcommand{\Ga}{G_{\mathrm a}}
\newcommand{\Gm}{G_{\mathrm m}}

\DeclareMathOperator{\rk}{rk}
\DeclareMathOperator{\trdeg}{trdeg}
\DeclareMathOperator{\RM}{RM}

\DeclareMathOperator{\acl}{acl}
\DeclareMathOperator{\dcl}{dcl}
\DeclareMathOperator{\Cb}{Cb}
\DeclareMathOperator{\tp}{tp}

\newcommand{\Frob}{\mathord{\operatorname{Frob}}}
\DeclareMathOperator{\Aff}{Aff}

\DeclareMathOperator{\End}{End}
\DeclareMathOperator{\Aut}{Aut}

\DeclareMathOperator{\QAut}{QAut}

\DeclareMathOperator{\ACF}{\mathrm{ACF}}

\newcommand{\indep}[1][]{%
  \mathrel{\mathop{\vcenter{\hbox{\oalign{\noalign{\kern-.3ex}\hfil$\vert$\hfil\cr
    \noalign{\kern-.7ex}
    $\smile$\cr\noalign{\kern-.3ex}}}
  }}\displaylimits_{#1}}
}

\usepackage{mathtools}
\newcommand{\defas}{\coloneqq}
\newcommand{\asdef}{\eqqcolon}
\newcommand{\defiff}{\mathrel{\mathord:\!\!\mathord{\iff}}}
\newcommand{\Set}[1]{\left\{\,#1\,\right\}}
\newcommand{\isoto}{\xrightarrow{\smash{\raisebox{-0.5ex}{\ensuremath{\mathord\sim\mkern2mu}}}}}

\usepackage{romanbar}
\newcommand{\Rel}[1]{\mathrel{\text{\Romanbar{#1}}}}
\newcommand{\oRel}[1]{\mathord{\Rel #1}}

\usepackage{amsmath, amssymb, amsthm}
\usepackage{leftindex}

\usepackage[all]{xy}

\renewcommand{\restriction}{\mathord{\upharpoonright}}

\usepackage{comment}

\usepackage{hyperref}
\definecolor{darkred}{RGB}{160,0,0}
\definecolor{darkpurple}{RGB}{120,0,120}
\definecolor{lightpurple}{RGB}{140,50,150}
\definecolor{darkblue}{RGB}{0,0,160}
\hypersetup{
  colorlinks,
  citecolor=lightpurple,
  filecolor=black,
  linkcolor=darkblue,
  urlcolor=darkblue
}
\usepackage[sort,nameinlink]{cleveref}

\theoremstyle{definition}
\newtheorem{theorem}{Theorem}[section]
\newtheorem{lemma}[theorem]{Lemma}
\newtheorem{proposition}[theorem]{Proposition}
\newtheorem{claim}[theorem]{Claim}
\newtheorem{remark}[theorem]{Remark}
\newtheorem{corollary}[theorem]{Corollary}
\newtheorem{definition}[theorem]{Definition}

\newtheorem{notation}[theorem]{Notation}
\newtheorem{convention}[theorem]{Convention}

\usepackage{todonotes}

\title{Recognition of algebraic matroids is undecidable}

\author{Tobias Boege}
\address[Tobias Boege]{%
UiT The Arctic University of Norway, Tromsø, Norway}
\email{post@taboege.de}

\author{Geva Yashfe}
\address[Geva Yashfe]{University of Chicago}
\email{gyashfe@uchicago.edu}

\date{\today}

\subjclass[2020]{%
05B35  %
(primary)
03B25, %
12L05,  %
12L12  %
(secondary)%
}

\keywords{%
  algebraic matroid,
  group configuration theorem,
  transcendence degree,
  undecidability%
}

\begin{document}

\begin{abstract}
We prove that the recognition problem for algebraic matroids is undecidable. Explicitly, this means that there is no algorithm that takes as input a finite set $S$ and a function $r\colon\mathcal{P}(S) \to \mathbb{Z}_{\ge 0}$ (where $\mathcal{P}(S)$ is the power set) and decides whether there exists a pair of fields $F \subset K$, and a function $f\colon S \to K$, such that for all $A \subseteq S$: $\mathrm{trdeg}_F f(A) = r(A)$.

This problem is known to be decidable if the characteristic of the fields involved is constrained to be~zero. We prove that it is undecidable if the characteristic is either left unspecified (in which case a realization over any characteristic is accepted) or fixed to be a prime $p$.

The proof relies on Hrushovski--Zilber's Group Configuration Theorem and on the work of Evans and Hrushovski on ``Projective Planes in Algebraically Closed Fields''. We relate two different such projective planes, and eventually construct a reduction from the solvability of Diophantine equations over $\mathbb{F}_p(x)$ ($p$ prime) to algebraicity of matroids. Solvability of Diophantine equations over $\mathbb{F}_p(x)$ was proved to be undecidable by Pheidas for all $p > 2$, and later by Videla for $p=2$. A central part of our proof is a variant of the so-called Field Configuration Theorem.
\end{abstract}

\maketitle

\tableofcontents

\section{Introduction}

Transcendence bases of field extensions satisfy the Steinitz exchange property. This familiar property (from linear algebra and elsewhere) implies that all transcendence bases of the same field extension have the same cardinality, and that the transcendence degree function gives each field extension the structure of a matroid. Finite submatroids of such matroids are called \emph{algebraic}. Concretely, an algebraic matroid is a set $E$ and a \emph{rank function} $r\colon \mathcal{P}(E) \to \ZZ_{\ge 0}$ from the power set of $E$ to nonnegative integers, such that there exists a function $f\colon E \to K$, where $F \subset K$ are fields, satisfying
\[\forall A \subseteq E:\quad r(A) = \trdeg_F f(A).\]
(The rank function of a matroid is subject to additional constraints, called the rank axioms, but these are automatically satisfied when there exists $f$ as above.)

In this paper we prove that the recognition problem for algebraic matroids is undecidable. That is, there is no algorithm to decide whether a given matroid (which is the pair $(E,r)$ above) can be realized over some pair of fields. More precisely, we prove:
\begin{theorem} \leavevmode
\begin{enumerate}
    \item There is no algorithm which takes as input a matroid and decides whether it is algebraic in a given characteristic $p > 0$.
    \item There is no algorithm which takes as input a matroid and decides whether it is algebraic.
\end{enumerate}
\end{theorem}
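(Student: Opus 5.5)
The plan is to reduce the solvability of Diophantine equations over $\FF_p(x)$, which is undecidable by Pheidas ($p>2$) and Videla ($p=2$), to algebraicity of matroids. Concretely, given a finite system $\Sigma$ of polynomial equations with coefficients in $\FF_p(x)$, we will effectively build a finite matroid $M_\Sigma$ with two properties. First, $M_\Sigma$ is algebraic (in characteristic $p$, equivalently in any characteristic) if and only if $\Sigma$ has a solution in $\FF_p(x)$. Second, $M_\Sigma$ is never algebraic in characteristic $0$, and more generally never in characteristic $q \neq p$. Given both properties, parts (1) and (2) follow at once, since an algorithm for either problem would decide solvability of $\Sigma$.

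The matroid $M_\Sigma$ will encode a projective plane in the style of Evans--Hrushovski, with gadgets for addition and multiplication. We start from a rank-$3$ configuration whose points represent elements of a field, and whose lines are defined via a group configuration. The Group Configuration Theorem of Hrushovski--Zilber then implies that any algebraic realization comes from a connected one-dimensional algebraic group acting, so from $\Ga$ or $\Gm$ over an algebraically closed field $K$. The key step is a variant of the Field Configuration Theorem: a configuration encoding both addition and multiplication compatibly forces the group action to come from $\Aff$ of a field. This step identifies a realization with points of $K$, up to the usual ambiguity in positive characteristic, namely Frobenius twists, where algebraic closure in the matroid cannot distinguish $a$ from $a^{p^n}$.

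With the field structure in hand, we encode each equation of $\Sigma$ by points $a_i$ whose ``sums'' and ``products'' are built via configurations. The constants $x$ and $1$ are encoded by a generic transcendental point and a distinguished point, which forces the characteristic to be $p$. For example, we add a configuration making $\underbrace{1+\dots+1}_{p}=0$. To force the unknowns to lie in $\FF_p(x)$, more precisely in the perfect closure or algebraic closure determined by $x$, we require each unknown to be in the rank-one closure of $x$. We then relate two different Evans--Hrushovski planes to pin down the residual Frobenius ambiguity. The difficulty here is that matroid closure only sees $\acl(x)=\ol{\FF_p(x)}$, not $\FF_p(x)$ itself. So we must use Pheidas--Videla in a form over the algebraic/perfect closure, or encode rationality by a further trick, for instance by representing elements via points on the plane defined over the base where Frobenius acts. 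Conversely, if $\Sigma$ is solvable, we realize $M_\Sigma$ explicitly in $\ol{\FF_p(x)}$ adjoined with generic elements, checking all ranks by a direct transcendence-degree computation.

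I expect the main obstacle to be the Field Configuration variant together with the control of Frobenius twists. We must show that every realization, not just a nice one, yields a genuine solution of $\Sigma$ in $\FF_p(x)$ rather than a ``twisted'' one in which equations hold only up to powers of Frobenius. My first attempt would be to normalize the realization using the first plane so that the group is exactly $\Aff(K)$ acting standardly. The second plane would then show that any twist would contradict a rank condition. Finally, untwisting would convert a twisted solution into an honest one, using that the Diophantine problem is invariant under Frobenius applied coordinatewise.
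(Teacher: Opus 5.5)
Your proposal has a genuine gap at its foundation. You treat the plane as coordinatized by the ambient field $K$: you speak of identifying a realization with points of $K$, of unknowns as elements of $\acl(x)$, and of a final ``untwisting''. But a matroid records only algebraic closure, so no realization determines individual field elements, not even up to Frobenius. In your encoding every unknown in $\acl(x)=\overline{\FF_p(x)}$ is either a loop or parallel to $x$, so the matroid records nothing about $\Sigma$. For the same reason the converse realization ``in $\overline{\FF_p(x)}$'' cannot work. Your fallback to Pheidas--Videla over the algebraic closure also fails, since the existential theory of $\overline{\FF_p(x)}$ is decidable.

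What the Evans--Hrushovski theory actually gives is a one-dimensional group $G$, which is $\Ga$, $\Gm$, or an elliptic curve (you omitted the last). The points of the plane are coordinatized by the skew field $L_F(G)$ of fractions of $\End_F(G)$, and von Staudt gadgets encode equations over $L_F(G)$, not over $K$. For $G=\Ga$ this is $F(x;\Frob)$. There the Frobenius is not an ambiguity to be removed but an honest element $x$, and the copy of $\FF_p(t)$ you need is its centralizer $\FF_p(x)$ (\Cref{lemma: x centralizer}).

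The missing idea is how to \emph{name} that constant. \Cref{thm: Pheidas--Videla} needs $t$ in the language. A ``generic'' extra point on a line of the plane is just another existentially quantified element of $L_F(G)$, not a named constant, and nothing in your plan produces $x$ (or a conjugate of it).

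This is the technical core of the paper. A rank-two group configuration is forced, by purely rank-theoretic conditions, to have group $\Gm\ltimes_\theta\Ga$ (\Cref{thm: Aff conf}). The element $p\in\QQ=L_F(\Gm)$ is built by von Staudt constructions in a $\Gm$-plane. It is transferred (\Cref{thm: Gm transfer}) to a quasi-automorphism of $\Gm\ltimes_\theta\Ga$, which by \Cref{prop: Gm to Aff quasi-aut} must act as $(s,a)\mapsto(s^p, ca^p+\cdots)$. Then \Cref{thm: Ga transfer} carries it to a point of the $\Ga$-plane labelled by a conjugate of $x$. So the two planes are related in order to \emph{create} the Frobenius, not to eliminate it, and the affine group links quasi-automorphisms of $\Gm$ and $\Ga$ rather than exposing a field.

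Your device for part (2) is sound once reinterpreted in $L_F(G)$. Imposing $p=0$ in the coordinatizing skew field rules out $\Gm$, elliptic curves, and every characteristic other than $p$. It is a valid substitute for the paper's $M\oplus N_p$ with Lindstr\"om's matroids, but it only helps once part (1) has a working reduction.
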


Algebraic matroids have a very different flavor over characteristic zero as opposed to characteristic $p > 0$. In characteristic zero, a matroid is algebraic if and only if it is linear\footnote{That is, its elements can be mapped into a vector space in such a way that independence in the matroid corresponds exactly with linear independence in the vector space.}; this is seen essentially by passing to differentials (see \cite[Chp.~VIII, Prop.~5.5]{LangAlgebra}\footnote{The numbering is different in older editions. It is the section on derivations in the chapter on transcendental extensions.}). Many problems around linearity of matroids are decidable: linearity is decidable in a given characteristic, and so is the problem of linearity over \emph{some} characteristic, which is not fixed in advance; see~\cite[Sect.~6.8]{Oxley} for references. Algebraicity of matroids in characteristic $p > 0$ is not equivalent to linearity, and there exist matroids which are algebraic without being linear over any field. %

For further discussion of algebraic matroids in general, see \cite{Oxley,White}. For applications of algebraic matroids, see the survey \cite{AlgMatInAction}. Algebraic matroids are related to several realization problems in algebraic geometry, including the realizability of tropical varieties (see \cite{Yu17}), the realizability of algebraic cycles by irreducible varieties (\cite{Yu17} and \cite[Example 7.7]{HuhTropical}) and also the realizability problem for volume polynomials over a field (\cite[Proposition 5.4]{GHMSW} and \cite{CCLMZ}).

\subsection{Von Staudt constructions in algebraic matroids} 
The archetypal example of algebra arising from combinatorial geometry is the coordinatization of projective planes. It is well known that for every Desarguesian projective plane there exists a division ring $D$ coordinatizing the plane. Geometric configurations then force algebraic relations between the coordinates. This can be used to encode the solvability problem for systems of equations over $D$ in the realizability problem for point-line configurations within such a plane. An early use of this was made by von Staudt (\cite{vS57}); the idea is now sometimes referred to as ``the von Staudt construction''. Later, Mn\"ev \cite{Mnev} (and independently Sturmfels \cite{BokowskiSturmfels}) used it in a more matroid-theoretic context to prove a complexity-theoretic result and inspired many other applications of the same family of ideas. One source that explains the concept and basic construction in a way relevant to what we need is \cite[Sections~1.3 and~3]{SkewVonStaudt}.

\begin{figure}
\begin{tikzpicture}[every node/.style={fill, circle, thick, inner sep=3pt, outer sep=0pt}, every path/.style={thick}]
  \node [label={[yshift=-3ex]above:$[0:0:1]$}] (x) at (0,2) {};
  \node [label={left:$[1:0:0]$}] (z) at (-3,-2) {};
  \node [label={right:$[0:1:0]$}] (y) at (3,-2) {};
  \node [label={left:$[1:0:-1]$}] (a) at (-1.5,0) {};
  \node [label={[yshift=3ex]below:$[-1:1:0]$}] (b) at (0,-2) {};
  \node [label={right:$[0:-1:1]$}] (c) at (1.5,0) {};

  \draw (x) -- (a) -- (z) -- (b) -- (y) -- (c) -- (x);
  \draw (a) to[out=270, in=180] (b) to[out=0, in=270] (c);
\end{tikzpicture}
\caption{A picture of the group configuration~$M(K_4)$, realized in a projective plane over a skew field. Any algebraic realization of this matroid yields a one-dimensional algebraic group $G$ with an associated skew field~$L_F(G)$. This skew field coordinatizes a projective plane in which the six points lie as pictured.}
\label{fig: intro group conf}
\end{figure}

Evans and Hrushovski \cite{EH} showed how to use von~Staudt constructions in the context of algebraic matroids. Their idea is based on the Group Configuration Theorem, which essentially states that from any algebraic realization of the matroid $M(K_4)$ in a field extension $F \subset K$, one can read off a one-dimensional algebraic group $G$, uniquely defined up to isogeny. By adding additional points and lines to the figure one can encode polynomial equations over the Ore skew field of fractions of the ring of $F$-definable endomorphisms of $G$. We denote this skew field by $L_F(G)$.

The idea is that one can ensure that a given configuration is a subconfiguration of a projective plane over the skew field mentioned above, and then use von Staudt constructions. The original six points of the group configuration $M(K_4)$ embed in this projective plane as pictured in \Cref{fig: intro group conf}. A more detailed discussion of this is contained in \Cref{sec: EH planes}.

We need a variant of this construction that works in higher dimensional groups which are not necessarily commutative. The endomorphisms no longer form a ring (pointwise addition is impossible), but there is an analogue of the multiplicative group of the skew field. In \Cref{sec: quasi auts} we prove that points on the line between $[1:0:0]$ and $[0:1:0]$ correspond to what we call \emph{quasi-automorphisms} of $G$: these are subgroups of $G \times G$ that project onto each factor with finite kernel. Each of them is of the form
\[\{(g_1,g_2) \in G\times G \mid \varphi(g_1) = \psi(g_2)\}\]
for appropriately chosen nontrivial endomorphisms $\varphi,\psi$ of $G$. Such a quasi-automorphism corresponds to the point $[-\varphi: \psi: 0] = [-\psi^{-1} \varphi : 1 : 0]$ of the projective plane (see \Cref{rem: quasi-aut coordinates}). The quasi-automorphisms also exist for higher dimensional $G$.

\subsection{Overview of the proof}
If we force the group of a group configuration to be isogenous to a fixed $G$, then by adding points and lines to the corresponding matroid, we can relate the algebraicity problem to the solvability of equations in $L_F(G)$. There are two difficulties with this general approach to proving undecidability. First, it is not clear how to force $G$ to be isogenous to a particular group; and second, the existential theory of $L_F(G)$ is not known to be undecidable for any of the available groups. The skew fields one can obtain are:
\begin{itemize}[itemsep=0.4em]
    \item If $G=\Gm$ is the multiplicative group then $L_F(G) = \QQ$.
    \item If $G$ is an elliptic curve, $L_F(G)$ is either $\QQ$, an imaginary quadratic extension of $\QQ$, or a quaternion algebra over~$\QQ$.
    \item If $G=\Ga$ is the additive group, $L_F(G)$ is the skew field of fractions of the ring of $p$-polynomials. The ring of $p$-polynomials is isomorphic to Ore's ring of skew-polynomials $F[x;\Frob]$, whose elements are precisely the polynomials in $x$ over $F$, but in which the multiplication is twisted by the Frobenius: $x\cdot c = c^p\cdot x$ for $c \in F$. So $L_F(\Ga) = F(x;\Frob)$.
\end{itemize}

Our construction eventually produces a group configuration in which the group is $\Ga$ (this is straightforward) together with the extra point ${[-x: 1: 0]}$ in the projective plane over $F(x; \Frob) \simeq L_F(\Ga)$. By this we mean that we produce an algebraic matroid, and in every one of its algebraic realizations in fields $F \subset K$, the special point is coordinatized by ${[-x : 1: 0]}$ in the unique coordinate system that assigns the group configuration the coordinates ${[1:0:0]},{[0:1:0]},\ldots,{[0:-1:1]}$. (More precisely, this is true up to conjugation in $F(x;\Frob)$.) Producing this point is where most of the work lies. This gives us access to equations over $F(x;\Frob)$ with coefficients in $\ZZ[x]$. We can use this to encode systems of equations over $\FF_p(x)$, which is the centralizer of $x$ in $F(x;\Frob)$. This is accomplished by adding, for each variable $y_i$, the extra equation $xy_i = y_ix$ to the system. The solvability of systems of equations over $\FF_p(x)$ is undecidable by work of Pheidas ($p > 2$) and Videla ($p=2$): see the papers \cite{Pheidas,Videla}, or \Cref{thm: Pheidas--Videla} for our statement of their theorem. 

To construct the special point, we relate quasi-automorphisms of $\Ga$ and of $\Gm$ through the affine group $\Aff = \Gm \ltimes \Ga$. It turns out that in characteristic $p$, a quasi-automorphism of $\Aff$ inducing the $p$-th power map on $\Gm$ must induce the Frobenius on $\Ga$, up to conjugation in $L_F(\Ga)$. The Frobenius on $\Ga$ is represented by the element $x \in F(x;\Frob) = L_F(\Ga)$, so this accomplishes our goal. The $p$-th power map on $\Gm$ is just the element~$p$ within $L_F(\Gm)\simeq \QQ$. Writing $p = 1+1+\ldots+1$, von Staudt constructions can be used to produce this~point.

The remaining challenge is to construct a group configuration for $\Aff$ matroid-theoretically, and relate quasi-automorphisms of $\Ga$ and of $\Gm$ to those of $\Aff$. This is the heart of the paper. Relating between quasi-automorphisms of $\Aff$ and of $\Gm$ (via the quotient map) is accomplished in \Cref{sec: quasi auts}. We construct a group configuration for $\Aff$ by a new variant of the Field Configuration Theorem (utilizing a different configuration): the original does not seem to suffice, because its input includes a condition on canonical bases which is not stated in terms of ranks alone. This is accomplished in \Cref{sec: Aff}, where we also relate quasi-automorphisms of $\Aff$ with those of $\Ga$, essentially via the inclusion.

\section{Preliminaries}

We use various basic tools from model theory (see \cite{Marker,Palacin,Bays,Poizat}) and some terminology from matroid theory (\cite{Oxley}).

\subsection{Varieties and coordinates}
Algebraic varieties in this paper are defined and used in a manner close to \cite{Poizat}:

Let $\mathbb{K}$ be an algebraically closed field.
By an \emph{algebraic variety} over an algebraically closed subfield $F \subseteq \mathbb{K}$, we mean one given by a finite atlas of Zariski-closed charts $C_1,\ldots,C_n$ (where for each $i$, $C_i \subseteq \mathbb{K}^{d_i}$ is the zero set of a collection of polynomials with coefficients in $F$), with transition maps defined by rational maps with coefficients in~$F$. As a set, the variety is then the disjoint union of charts, modulo the equivalence relation generated by identifying points with their images under the transition maps. Since $\ACF_p$ has elimination of imaginaries, this set is definable over $F$. Note that this gives us access to notions such as the Morley rank of the point of a variety, or its algebraic closure. If $\mathbb{K}$ contains elements which are transcendental over $F$, an $F$-definable variety contains points which are not themselves $F$-definable whenever it is of dimension greater than~zero.

\begin{convention}\label[convention]{conv:variety_points}
    Let $V$ be an algebraic variety defined over a field $F \subseteq \mathbb{K}$, and let $P \in V$. By~$F(P)$ we mean the subfield of $\mathbb{K}$ generated by the union of $F$ with the coordinate tuple of $P$ in any one of the affine charts of $V$. Since transition maps are $F$-rational, this field is independent of the choice of chart.

    Note that $P \in \acl(F\cup\{P'\})$ for $P\in V$, $P'\in V'$ points in algebraic varieties definable over $F$ if and only if this holds for their coordinate tuples in some pair (and hence in all pairs) of affine charts.
\end{convention}

A \emph{definable} subset of a variety is the same thing as a \emph{constructible} subset. For definable subsets of varieties, Morley rank is equivalent to Krull dimension of the closure. We use a small amount of dimension theory: one relevant fact is that Morley rank is definable (\cite[6.2.20]{Marker}).
\subsection{Group configurations and algebraic groups}

Let $F$ be algebraically closed and $\mathbb{K}$ an algebraically closed extension. Assume $\mathbb{K}$ has at least countable transcendence degree over $F$. We work in the language of fields with the elements of $F$ adjoined as constants. So algebraic closure, rank, and so on are over~$F$.

\begin{figure}
\begin{tikzpicture}[every node/.style={fill, circle, thick, inner sep=3pt, outer sep=0pt}, every path/.style={thick}]
  \node [label={above:$x$}] (x) at (0,2) {};
  \node [label={left:$y$}] (y) at (-3,-2) {};
  \node [label={right:$z$}] (z) at (3,-2) {};
  \node [label={left:$a$}] (a) at (-1.5,0) {};
  \node [label={below:$b$}] (b) at (0,-2) {};
  \node [label={right:$c$}] (c) at (1.5,0) {};

  \draw (x) -- (a) -- (y) -- (b) -- (z) -- (c) -- (x);
  \draw (a) to[out=270, in=180] (b) to[out=0, in=270] (c);
\end{tikzpicture}
\caption{A group configuration.}
\label{fig: group conf}
\end{figure}

We state the version of the group configuration theorem that we need, then relate it to the more general version usually stated and proved in the literature.
\begin{definition}
A \emph{group configuration} in $\mathbb{K}$ over $F$ is a tuple $(a,b,c,x,y,z)$ of tuples in $\mathbb{K}$ such that:
\begin{enumerate}
\item Any non-collinear triple in \Cref{fig: group conf} is independent,
\item $\acl(a,b) = \acl(a,c) = \acl(b,c)$,
\item $\acl(a,x) = \acl(a,y) = \acl(x,y)$, and similarly for the triples $\{b,y,z\}$ and $\{c,x,z\}$.
\end{enumerate}
\end{definition}
See also \Cref{def: group config}.
\begin{theorem}
    Let $(a,b,c,x,y,z)$ be a group configuration over $F$ in $\mathbb{K}$. Then, possibly after extending $F$ by finitely many transcendentals of $\mathbb{K}$, independent of the given elements, there exists an $F$-definable group $G$ and a tuple $(a',b',c',x',y',z')$ of elements of $G$, such that each primed element is interalgebraic with the corresponding unprimed one, such that
    \[x'a' = y',\quad y'b' = z',\quad \text{and}\quad z'c' = x'.\]
\end{theorem}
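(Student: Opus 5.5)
The plan is to deduce this from the general Group Configuration Theorem of Hrushovski and Zilber, as stated for stable theories (e.g.\ in \cite{Bays,Palacin}), specialised to $\ACF_p$. Then we use elimination of imaginaries and Weil--van den Dries--Hrushovski to make the group algebraic and $F$-definable. The general theorem gives the following, after naming a finite set of parameters independent from the configuration. There is a connected type-definable (here definable) group $G$ acting definably and transitively on a definable set $X$. There are generic independent elements $a'',b''\in G$ with $c''$ interalgebraic with $c$. There are points $x'',y'',z''\in X$ interalgebraic with $x,y,z$, such that, up to the orientation convention, $a''\cdot x''=y''$, $b''\cdot y''=z''$ and $c''\cdot z''=x''$. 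In $\ACF$ the parameters can be taken to be a finite tuple in $\mathbb{K}$ independent over $F$ from $abcxyz$. Since $F$ is algebraically closed and $\mathbb{K}$ has infinite transcendence degree, we may take them to be finitely many transcendentals, and we adjoin them to $F$ (then take the algebraic closure).

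The next step is to show that $X$ is essentially $G$ itself. Here I use the rank equalities forced by the definition. From $\acl(a,x)=\acl(a,y)=\acl(x,y)$ and pairwise independence of the non-collinear pairs, we get
\[\rk a+\rk x=\rk a+\rk y=\rk x+\rk y,\]
so $\rk a=\rk x=\rk y$. By symmetry all six elements have the same rank $n$, and $\rk G=\rk X=n$. Fix a generic $o\in X$ independent from everything. The stabiliser $H=\mathrm{Stab}_G(o)$ has rank $\rk G-\rk X=0$, so it is finite. Hence the orbit map $G\to X$, $g\mapsto g\cdot o$, is finite-to-one and onto a cofinite (generic) part of $X$.

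Now define $x'$ as a generic element with $x'\cdot o$ equal to $x''$. Then $x'$ is interalgebraic with $x''$ over $F o$, hence with $x$. However, this only determines $x'$ up to $H$, so the relation $a''x'=y'$ would hold only modulo $H$. To fix this I would replace $G$ by a group in which the relevant translations act freely. A cleaner route is to choose $x'$ generic over everything with $x'\cdot o=x''$, then set $y'\defas a''x'$ and $z'\defas b''y'$. We then have $y'\cdot o=y''$ and $z'\cdot o=z''$, so $y'$ and $z'$ are interalgebraic with $y$ and $z$ over $Fo$. For the third relation, $c''z'$ and $x'$ both send $o$ to $x''$, so $c''z'=x'h$ for some $h\in H$. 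We absorb $h$ by replacing $c''$ with $c'\defas x'h^{-1}x'^{-1}c''$; this does not change interalgebraicity, because $H$ is finite and $x'$ is algebraic over the rest up to finitely many choices. Finally we match the paper's convention by passing to the opposite group, or by taking inverses. Writing $a'$ for the element acting so that $x'a'=y'$, and similarly for $b',c'$, we obtain $x'a'=y'$, $y'b'=z'$ and $z'c'=x'$.

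The main obstacle is the bookkeeping in the previous step. We must check that interalgebraicity survives both adjoining $o$ (generic and independent, so it can be absorbed into the extension of $F$ by transcendentals) and the correction by the finite stabiliser. We must also check that the resulting $G$ is $F$-definable: it is definable over the enlarged $F$, which is algebraically closed and a model of $\ACF$, and definable groups in $\ACF$ are definably isomorphic to algebraic groups. If the $H$-correction proves awkward, the fallback is the standard argument that, when $\rk X=\rk G$, the generic action is regular after replacing $X$ by $G/H$ and $G$ by a suitable isogenous quotient. This is harmless, since the theorem only requires interalgebraicity.
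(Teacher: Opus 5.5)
Your plan follows the paper's route: the general Group Configuration Theorem, definability of $\bigwedge$-definable groups in $\ACF$, then a base point with finite stabiliser to pass from the homogeneous space into $G$. But it skips the one step the paper actually has to supply. The stable-theory version you cite does not take condition (3) of the definition as input. It assumes $\acl(a,x)=\acl(a,y)$ together with the nondegeneracy condition $\acl(\Cb(x,y/a))=\acl(a)$, and likewise for $(b,y,z)$ and $(c,z,x)$. ``Specialising to $\ACF_p$'' does not give this; it is exactly the hypothesis that excludes degenerate configurations. Your rank bookkeeping is the right input: every point has rank $n$, pairs have rank $2n$, and $\rk(a,x,y)=2n$ because $a\in\acl(x,y)$. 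You still need the argument. Since $\tp(x,y/\acl(a))$ is stationary, $B=\Cb(x,y/a)\subseteq\acl(a)$. Since $a\indep_{B} x,y$ and $\RM(x,y)=\RM(a,x,y)$, one gets $\RM(x,y/B)=\RM(a,x,y/B)$, hence $\RM(a/B)=0$.

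Second, you quote the theorem without the relation $b'=a'^{-1}c'^{-1}$ that it provides; in your left-action form this reads $c''b''a''=e$. With that relation your $h$ is automatically trivial, since $c''z'=c''b''a''x'=x'$. This is how the paper's corollary gets $\tilde z c=\tilde x$ for free. Without it, your correction $c'\defas x'h^{-1}x'^{-1}c''$ does give $c'z'=x'$, but your reason for $c'$ being interalgebraic with $c$ is not valid. The element $x'h^{-1}x'^{-1}$ lies in $\mathrm{Stab}_G(x'')$, a finite group that moves with $x''$, so finiteness of $H$ alone does not make $c'$ algebraic over $c''$. Here is an argument that works. Put $s=c'c''^{-1}$; then $s\in\mathrm{Stab}_G(x'')\subseteq\acl(x)$. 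On the other hand $c'=x'z'^{-1}=(b''a'')^{-1}$, so $s\in\acl(a,b)$. Since $x\indep ab$ over the enlarged base (containing the theorem's parameters and $o$), $s$ is algebraic over the base, so $c'$ and $c''$ are interalgebraic. The generic base point $o$ is harmless but unnecessary: the enlarged $F$ is algebraically closed, so $X$ has an $F$-definable point, which is what the paper uses.
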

So the input to the theorem is a group configuration, and the output is a tuple of elements in a group. It is convenient to also call the output tuples by the same term:
\begin{definition}
    \label{def: group config} A \emph{group configuration over an algebraic group $G$} is a tuple $(a,b,c,x,y,z)$ of elements of $G$ such that $x,y,z$ are independent generics, $xa = y$, $yb = z$, and $zc = x$.
\end{definition}

The form of the group configuration theorem that we cite takes slightly different input: instead of condition (3) in the definition of group configurations above, it assumes that $\acl(a,x) = \acl(a,y)$ and $\acl(\Cb(x,y/a)) = \acl(a)$ (and similarly for the triples $\{b,y,z\}$ and $\{c,x,z\}$). The following lemma bridges the gap.

\begin{lemma}
Suppose that tuples $a, x, y \in \mathbb{K}$ 
satisfy that
$\RM(a) = \RM(x) = \RM(y) = r$, any two are independent so have rank $2r$, and $\rk(a,x,y) = 2r$. Then $a$ is interalgebraic with~$\Cb(x,y/a)$.
\end{lemma}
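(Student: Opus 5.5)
Write $e = \Cb(x,y/a)$, i.e.\ the canonical base of the stationary type $\tp(x,y/\acl(a))$; we work throughout in $\mathbb{K}$, with rank meaning Morley rank over $F$. Two things must be shown: $e \in \acl(a)$ and $a \in \acl(e)$. The first is immediate, because a canonical base of a type over $\acl(a)$ lies in $\acl(a)^{\mathrm{eq}}$, and $\ACF$ eliminates imaginaries. So all the work is in the second inclusion, and we argue by contradiction using rank counting.

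For the second inclusion, suppose $a \notin \acl(e)$. Then $\RM(a/e) \ge 1$, and $\tp(x,y/\acl(a))$ does not fork over $e$.

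\begin{enumerate}
\item Take a realization $a'$ of $\tp(a/\acl(e))$ with $a' \neq a$ and $a' \indep_{e} a,x,y$. This is possible since $\RM(a/e)\ge 1$, so $\tp(a/\acl(e))$ is non-algebraic and its nonforking extension to $\acl(e,a,x,y)$ is not realized by $a$.
\item Since $\tp(x,y/\acl(a'))$ is the conjugate of $\tp(x,y/\acl(a))$ over $\acl(e)$, it is also the nonforking extension of the same type over $\acl(e)$. Hence we may choose the realization so that $(x,y)\models \tp(x,y/\acl(a'))$ as well; this is the standard fact that realizations of conjugate nonforking extensions of a type over its canonical base can be amalgamated.
\item From $\acl(a,x)=\acl(x,y)$ (the rank hypotheses give $a\in\acl(x,y)$, and also $y\in\acl(a,x)$, $x\in \acl(a,y)$), the same relations hold with $a'$ in place of $a$. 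So $a,a' \in \acl(x,y)$.
\item Now count ranks. By nonforking, $(x,y)\indep_{a} a'$. Together with $a' \in \acl(x,y)$ this forces $a' \in \acl(a)$.
\item Then choose $a'$ independent from $a$ over $e$ instead, which is allowed since the type is non-algebraic. Independence gives $a'\notin\acl(a)$ whenever $\RM(a/e)\ge 1$, contradicting the previous step.
\end{enumerate}

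Combining step~4 with step~5 gives the contradiction, so $a\in\acl(e)$.

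An alternative, more direct route avoids amalgamation. By the rank hypotheses, $a\in\acl(x,y)$, so some formula $\phi(x,y)\in\tp(x,y/\acl(a))$ has only finitely many solutions in the $a$-coordinate. Any automorphism fixing $e$ fixes $\tp(x,y/\acl(a))$ as a global type's restriction, and therefore sends $a$ into the finite set $\acl(x,y)\cap\{\text{realizations over } a\}$. This set is finite, so $a$ has finitely many conjugates over $e$, i.e.\ $a\in\acl(e)$.

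The main obstacle I expect is making the amalgamation in step~2 rigorous, i.e.\ justifying that a single $(x,y)$ realizes both conjugate types while keeping $a'$ outside $\acl(a)$. If this is troublesome, I would fall back on the definability-based argument: $\tp(x,y/\acl(a))$ has rank $r=\RM(x,y/a)$. Any $a''$ with the same type over $e$ gives a type over $\acl(a'')$ whose generic realizations coincide with those of $\tp(x,y/\acl(a))$. Since $a''\in\acl(x,y)$ uniformly, with a bounded number of possibilities, only finitely many such $a''$ exist.
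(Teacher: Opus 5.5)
Your main argument (steps 1--5) is correct, but it takes a detour compared with the paper. Both proofs obtain $\Cb(x,y/a)\subseteq\acl(a)$ from standard facts: you use elimination of imaginaries, and the paper uses stationarity of $\tp(x,y/\acl(a))$. For the reverse inclusion the paper argues directly. With $B=\Cb(x,y/a)$, nonforking over the canonical base gives $a\indep_{B} x,y$, so $\RM(a/B)=\RM(a/B,x,y)$. The latter is $0$ because $a\in\acl(x,y)$; equivalently, $\RM(x,y/B)=\RM(a,x,y/B)$ since $B\subseteq\acl(a)\subseteq\acl(x,y)$. You state this nonforking fact at the outset. Your step~4 uses exactly the same principle (independence over a base, plus algebraicity over the other side, forces algebraicity over the base). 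You only apply it to the conjugate $a'$ over $a$ instead of to $a$ over $e$, and applied directly it makes $a'$ superfluous. Your route stays closer to the conjugate-based characterization of canonical bases, but here it buys nothing extra and requires realizing an additional type.

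Some remarks on your write-up:
\begin{itemize}
\item Step~2 is not an obstacle. Step~1 gives $a'\indep_e a,x,y$, hence $x,y\indep_e a'$. By stationarity, $\tp(x,y/\acl(a'))$ is then the unique nonforking extension of $\tp(x,y/\acl(e))$, so it automatically equals the conjugate of $\tp(x,y/\acl(a))$; no separate amalgamation is needed.
\item In step~5 you must not re-choose $a'$, since steps 2--4 need the full independence $a'\indep_e a,x,y$. You do not need to: $a'\indep_e a$ already holds, and together with $a'\in\acl(a)$ it yields $\RM(a'/e)=\RM(a'/e,a)=0<\RM(a/e)$.
\item The ``more direct route'' is garbled as written, because an automorphism fixing $e$ also moves $x,y$. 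To make it work you would realize the global nonforking extension over many conjugates of $a$ at once and count solutions of the algebraic formula. In any case it is not needed.
\end{itemize}
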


\begin{proof}
In a stable theory like $\ACF_p$ types over models are stationary. Therefore the strong type $\tp(x,y/\acl(a))$ is stationary and it follows from \cite[Lem.~4.12]{Palacin} that $B = \Cb(x,y/a) \subseteq \acl(a)$. It remains to show that $a$ and $B$ have the same rank. The type $\tp(x,y/\acl(a))$ does not fork over its canonical base, so in particular $a \indep_{B} x,y$. In terms of ranks this means $\RM(a/B) + \RM(x,y/B) = \RM(a,x,y/B)$. The assumptions include the equality of ranks $\RM(x,y) = \RM(a,x,y)$ which implies $\RM(x,y/B) = \RM(a,x,y/B)$ and so $\RM(a/B) = 0$. %
\end{proof}

The following form of the group configuration theorem is the one most often proved in the literature. It applies much more generally in stable theories.
\begin{theorem}[Group configuration theorem]
\label[theorem]{thm: group config}
Let $(a,b,c,x,y,z)$ be a group configuration in $\mathbb{K}$. Then, after possibly expanding the language by parameters $B$ independent of $(a,b,c,x,y,z)$, there exists a connected right homogeneous space $(G, S)$, $\bigwedge$-definable over $\emptyset$, and a tuple $(a',b',c',x',y',z')$ such that each primed element is interalgebraic with the corresponding unprimed one, the elements of $(a',b',x')$ are pairwise independent, with $a',b'$ generic in $G$ and $x'$ generic in $S$, such that $b'=a'^{-1}\cdot_G c'^{-1}$, $y' = x'.a'$, and $z'=y'.b'$.
\end{theorem}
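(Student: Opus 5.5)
We will not reprove this from scratch; the plan is to follow Hrushovski's standard argument as presented in \cite{Bays,Palacin,Poizat}, which works in any stable theory and in particular in $\ACF_p$. It has three stages. First, normalize the configuration so that the collinear relations become \emph{generic definable bijections}. Second, build a type-definable family of \emph{germs} of such bijections that is generically closed under composition and inversion. Third, apply the group chunk theorem (Hrushovski--Weil) to obtain the homogeneous space $(G,S)$.

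\emph{Normalization.} Using the canonical base condition $\acl(\Cb(x,y/a)) = \acl(a)$, replace $a$ by a suitable finite tuple interalgebraic with it, and similarly for $b$ and $c$. We then work over a set $B$ of parameters realizing an independent copy of part of the configuration. This lets us arrange that $y \in \dcl(a,x)$ and $x \in \dcl(a,y)$, and likewise for the triples $\{b,y,z\}$ and $\{c,x,z\}$. Concretely:
\begin{enumerate}
\item Replace $x$ by a code for the finite set of its conjugates over the relevant parameters, and similarly for $y$ and $z$.
\item Replace $a$ by $\Cb(x,y/a)$, which is interalgebraic with $a$ by the lemma above.
\end{enumerate}
After this, $a$ names the germ, at the generic type $p$ of $x$, of a definable invertible map $\sigma_a$ with $\sigma_a(x) = y$. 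Two parameters then name the same germ exactly when they are equal, and $a$ is interdefinable with its germ. This preserves pairwise independence and interalgebraicity with the original elements.

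\emph{Germs and generic composition.} Let $\mathcal{G}$ be the set of germs $\sigma_{a'}$ for $a' \models \tp(a)$. The configuration gives $\sigma_c \circ \sigma_b \circ \sigma_a = \mathrm{id}$ generically on $p$, so the composite of two independent generic germs is again a germ in the family, up to interalgebraicity. To make the set of germs closed, pass to the germs $\sigma_{a_1}^{-1}\sigma_{a_2}$ with $a_1, a_2$ independent realizations of $\tp(a)$. Using the quadrangle independences, show that the rank of such a germ equals $\RM(a)$, i.e.\ the family does not grow under composition. This yields a type-definable set of germs with a generic associative partial operation, i.e.\ a group chunk, together with a generic action on $p$.

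\emph{Group chunk.} Apply Hrushovski--Weil to obtain a connected type-definable group $G$ whose generic type is the germ type, and a homogeneous space $S$ containing the generic of $p$ on which $G$ acts, extending the generic action. Let $a', b', c', x', y', z'$ be the corresponding germs and points. Then $y' = x'.a'$ and $z' = y'.b'$, and $b' = a'^{-1}c'^{-1}$ follows from the composition identity. Genericity and pairwise independence are inherited from the original configuration.

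The main obstacle is the second stage: showing that the germ family is closed under generic composition without increasing rank. This is exactly where all the independence and interalgebraicity conditions of the configuration are used. If the direct route is messy, we would first follow \cite{Bays} for this step, and in any case the paper can cite the theorem, since it is standard.
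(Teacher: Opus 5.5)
Your plan agrees with the paper, which does not prove this theorem itself but cites \cite[Thm.~6.1]{Bays} (with the multiplication convention flipped), after using the preceding lemma to turn condition~(3) into the hypothesis $\acl(\Cb(x,y/a))=\acl(a)$, exactly as you do. One caution about your sketch, which the citation makes moot: coding the finite set of conjugates of a point over the other two points of its line need not stay interalgebraic with that point, so your step~(1) is not a correct reduction to germs of definable bijections. For example, in $\Ga$ in characteristic $\neq 2$ with $x$ replaced by $x^2$, the conjugates of $y=x+a$ over $(a,x^2)$ are $a\pm x$, and their code is interdefinable with $a$ over $x^2$, so it is not algebraic over $y$; take that reduction from the cited proof instead.
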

The parameters $B$ can be chosen to be independent of any other given subset of $\mathbb{K}$, so long as the transcendence degree of $\mathbb{K}$ is high enough. A proof can be found in \cite[Thm.~6.1]{Bays} (our multiplication conventions are flipped relative to this presentation). The same volume also contains a self-contained exposition of some model-theoretic preliminaries (\cite{Palacin}).

\begin{remark} \label{rem: algebraic group}
In $\ACF$, a $\bigwedge$-definable group is actually definable. This is due to Hrushovski~\cite{HrushovskiDiss} and follows from \cite[Cor.~5.19]{Poizat} and the fact that $\ACF_p$ is totally transcendental, meaning every definable set has finite Morley rank. A $\bigwedge$-definable homogeneous space is then also definable: it is a coset space for the stabilizer subgroup of an element, and this subgroup is definable. In~our applications of the theorem the stabilizers are actually finite.
Moreover, and also due to Hrushovski, every group definable in an algebraically closed field $F$ is definably isomorphic to an algebraic group over~$F$; see \cite{BouscarenWeil} for more context and a more algebraic proof. Poizat \cite[Sect.~4.5]{Poizat} remarks that this endows a definable group with a canonical Zariski topology.
\end{remark}

We use group configurations to get hold of groups. Sometimes we can then forget about part of the group configuration, and retain only three of the elements:
\begin{definition}
A \emph{group triple} in $\mathbb{K}$ over $F$ is a triple $(x,y,z)$ of tuples in $\mathbb{K}$ such that: 
\begin{enumerate}
\item There exists a connected group $G$, definable over $F$, for which
$x,y,z\in G$ and $x\cdot_{G}y=z$. 
\item Each element of the triple is generic in $G$, and any two of them
are algebraically independent over $F$, meaning:
\begin{gather*}
\RM(x/F)=\RM(y/F)=\RM(z/F)=\dim(G)\,\text{ and} \\
\RM(xy/F)=\RM(yz/F)=\RM(xz/F)=2\dim(G).
\end{gather*}
\end{enumerate}
Note that (1) and (2) together imply $\RM(xyz/F)=2\dim(G)$.
\end{definition}

In the notation of \Cref{thm: group config}, if the homogeneous space $(G,S)$ is the right action of $G$ on itself, each of
\[(x,a,y),\ (y,b,z),\ (z,c,x), \text{ and }(a,b,c)\]
is a group triple.
If $(G,S)$ is a homogeneous space, it is possible to upgrade it to a free right $G$-space, and replace the $6$-tuple of the configuration with a tuple of elements of $G$. This is standard, but short enough that we include it for completeness.

\begin{lemma}
    Let $(G,S)$ be a connected right homogeneous space for an $F$-definable group $G$, with $\dim S = \dim G$. Fix an $F$-definable $s_0 \in S$ and let $x \in S$. Then $x$ is interalgebraic over $F$ with each element of $\{g \in G \mid s_0.g = x\}$.
\end{lemma}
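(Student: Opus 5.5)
The plan is to show two things: that $x$ is algebraic over $F \cup \{g\}$ for each $g$ in the fiber $T_x \defas \{g \in G \mid s_0.g = x\}$, and that each such $g$ is algebraic over $F \cup \{x\}$.

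\emph{Step 1: $x \in \dcl(F,g)$.} This direction is immediate. The action $S \times G \to S$ is $F$-definable and $s_0$ is $F$-definable, so $x = s_0.g$ is definable over $F \cup \{g\}$.

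\emph{Step 2: $T_x$ is finite, via the stabilizer.} Let $H = \operatorname{Stab}_G(s_0)$. This is an $F$-definable subgroup, and $T_x$ is a left coset $hH$ for any $h \in T_x$; it is nonempty because the action is transitive. The orbit map $G \to S$, $g \mapsto s_0.g$, is surjective, $F$-definable, and has all fibers equal to left cosets of $H$. Additivity of Morley rank for definable maps with fibers of constant rank (or just dimension theory of constructible sets) gives
\[\dim G = \dim S + \RM(H).\]
Since $\dim S = \dim G$ by hypothesis, we get $\RM(H) = 0$, so $H$ is finite. Hence every fiber $T_x$ is finite, with uniformly bounded size $\card{H}$.

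\emph{Step 3: $g \in \acl(F,x)$.} The set $T_x$ is definable over $F \cup \{x\}$, because it is cut out by the $F$-formula $s_0.g = x$ with parameter $x$. It is also finite, so each of its elements lies in $\acl(F \cup \{x\})$. Together with Step 1, this gives interalgebraicity.

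The only point needing care is Step 2. One must justify the rank additivity (fibers of the orbit map all have rank $\RM(H)$, since they are translates of $H$), and check that "connected" and "$\dim$" agree with Morley rank here. Per \Cref{rem: algebraic group}, we can treat $G$ and $S$ as definable with Morley rank equal to dimension. The connectedness of $S$ is not really needed beyond ensuring that $\dim S$ is the rank of the whole space.
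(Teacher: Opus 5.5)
Your proposal is correct and is essentially the paper's proof: $x = s_0.g$ is definable over $F \cup \{g\}$, and the $F(x)$-definable fibre is a translate of $\mathrm{Stab}_G(s_0)$, which is finite because $\dim S = \dim G - \dim \mathrm{Stab}_G(s_0)$, so each $g$ in the fibre is algebraic over $F \cup \{x\}$. One harmless slip: for a right action the fibre is the right coset $\mathrm{Stab}_G(s_0)\, h$ rather than $h\,\mathrm{Stab}_G(s_0)$, which does not affect the argument.
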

\begin{proof}
    Let $g\in G$ satisfy $s_0.g = x$. Then $g$ is one of finitely many elements of the set $\{g \in G \mid s_0.g = x\}$, hence algebraic over $x$ (algebraicity being considered over $F$): if this set is infinite then the stabilizer of $s_0$ is infinite, but then $\dim S = \dim G - \dim \mathrm{Stab}_G(s_0) < \dim G$.
    Conversely, $x$ is definable over $F(g)$.
\end{proof}
\begin{corollary}
    Let $(G,S)$ be a connected right homogeneous space for an $F$-definable group $G$ with $\dim S = \dim G$. Let $(a,b,c,x,y,z)$ satisfy $a,b,c\in G$, $x,y,z\in S$, $b = a^{-1} \cdot_G c^{-1}$, $x.a = y$, $y.b = z$, and $z.c = x$. Then there exist $\tilde{x},\tilde{y},\tilde{z} \in G$, interalgebraic with $x,y,z$ respectively, such that $\tilde x \cdot_G a = \tilde y$ and $\tilde y \cdot_G b = \tilde{z}$.
\end{corollary}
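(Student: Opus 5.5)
We fix an $F$-definable base point $s_0 \in S$ and pull everything back from $S$ to $G$ along the orbit map $g \mapsto s_0.g$. Since $S$ is homogeneous, $s_0$ exists over $F$ after extending $F$ if necessary; $F$ is algebraically closed and $S$ is definable over $F$, so $S$ has $F$-rational points. Choose $\tilde x \in G$ with $s_0.\tilde x = x$. Then set
\[
\tilde y \defas \tilde x \cdot_G a, \qquad \tilde z \defas \tilde y \cdot_G b .
\]
With these definitions the two required equations $\tilde x \cdot_G a = \tilde y$ and $\tilde y \cdot_G b = \tilde z$ hold automatically. The only real content is the interalgebraicity claims.

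First, by the preceding lemma, $x$ is interalgebraic over $F$ with $\tilde x$, since $\tilde x$ lies in $\{g \in G \mid s_0.g = x\}$. Next we compute
\[
s_0.\tilde y = (s_0.\tilde x).a = x.a = y ,
\]
so $\tilde y$ lies in $\{g \in G \mid s_0.g = y\}$, and the same lemma gives that $\tilde y$ is interalgebraic with $y$. Similarly,
\[
s_0.\tilde z = (s_0.\tilde y).b = y.b = z ,
\]
so $\tilde z$ is interalgebraic with $z$. These computations use only that the right action satisfies $s.(gh) = (s.g).h$.

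Finally, we note that $\tilde z \cdot_G c = \tilde x \cdot_G a b c = \tilde x$, since $b = a^{-1}c^{-1}$. So the third relation of the configuration also persists, although the statement does not require it. The only point needing care is the hypothesis $\dim S = \dim G$, which guarantees that the fibres of the orbit map are finite; this is already handled inside the preceding lemma, so I do not expect a real obstacle. If one is worried about $s_0$ being definable over $F$, the fallback is to take $s_0$ algebraic over $F$, which suffices because $F$ is algebraically closed.
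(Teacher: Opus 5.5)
Your proposal is correct and is the paper's proof: take $\tilde x$ in the fibre $\{g \in G \mid s_0.g = x\}$ via the preceding lemma, then set $\tilde y = \tilde x \cdot_G a$ and $\tilde z = \tilde y \cdot_G b$. You also check what the paper leaves implicit, namely that $s_0.\tilde y = y$ and $s_0.\tilde z = z$, so the same lemma gives the interalgebraicity of $\tilde y$ and $\tilde z$. Your hedging about extending $F$ to find $s_0$ is unnecessary, since a nonempty $F$-definable set already has an $F$-definable point when $F$ is algebraically closed.
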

\begin{proof}
    Use the lemma above to find $\tilde{x} \in G$ interalgebraic with $x$. Then define $\tilde y = \tilde x \cdot_G a$ and $\tilde z = \tilde y \cdot_G b$.
\end{proof}

There is also a sense in which the group of a group configuration is unique. See \Cref{thm: quasi-iso} and \Cref{cor: quasi-iso config}.

\subsection{Definable and algebraic group homomorphisms} To say that a function between algebraic varieties is definable is to say that its graph is constructible. This implies that over some open subset of the domain, the graph is actually a Zariski-closed subset; but that does not mean that the restriction to this open is a morphism of varieties. It could be the composition of such a morphism with a negative power of the Frobenius (and this is essentially the only thing that can go wrong). 
We collect two lemmas for later use and refer to \cite{Poizat} or \cite{Marker} for further discussion.

\begin{lemma} \label{lemma: Def hom}
Let $G$ and $H$ be algebraic groups and let $\varphi\colon G \to H$ be a definable map which is a group homomorphism. Then there exists a $k \ge 0$ such that $\Frob^k \circ \varphi$ is a morphism of algebraic varieties.
\end{lemma}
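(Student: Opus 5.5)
The plan is to deduce the statement from a general fact about definable maps in $\ACF_p$: if $f\colon V \to W$ is a definable map between varieties over $F$, then there is a dense open $U \subseteq V$ and $k \ge 0$ such that $\Frob^k \circ f$ restricted to $U$ is a morphism. After that, group translations spread this local regularity over all of $G$.

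The first step is to prove the fact on a generic point. Let $g \in G$ be generic over $F$ and $h = \varphi(g)$. Definability gives $h \in \dcl(F(g))$. In $\ACF_p$ the definable closure of a set is the perfect closure of the field it generates, so the coordinates of $h$ lie in $F(g)^{1/p^\infty}$. Since only finitely many coordinates are involved, there is $k$ with $\Frob^k(h) \in F(g)$ coordinatewise. Write these coordinates as rational functions of $g$ over $F$. They define a rational map $\rho\colon G \dashrightarrow H$ that agrees with $\Frob^k \circ \varphi$ at the generic point $g$.

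The second step is to pass from the generic point to an open set. The set of points where $\rho$ is defined and equals $\Frob^k \circ \varphi$ is definable over $F$ and contains a generic point. Hence it contains a dense open $U \subseteq G$, and on $U$ the map $\Frob^k \circ \varphi$ is a morphism. The same argument applies in each affine chart, so we may take $U$ inside a single chart.

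The third step uses the homomorphism property, and I expect it to be the main point requiring care. Note that $\Frob^k \circ \varphi$ is still a group homomorphism, since $\Frob$ is an endomorphism of the algebraic group $H$ defined over $F$. Here $\Frob$ is the $p^k$-power map on coordinates; it carries $H$ to its Frobenius twist, which equals $H$ because $H$ is defined over $F$ and $F$ is perfect, indeed algebraically closed. Set $\psi = \Frob^k \circ \varphi$. For any $a \in G$ and $x \in aU$ we have $\psi(x) = \psi(a)\,\psi(a^{-1}x)$. The right-hand side is a morphism on $aU$, being the composite of left translation by $a^{-1}$, then $\psi|_U$, then left multiplication by the fixed point $\psi(a)$. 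The translates $aU$ cover $G$; when $G$ is connected this holds because any two dense opens meet. For general $G$ one uses translates by representatives of the components.

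Being a morphism is local, so $\psi$ is a morphism on all of $G$. One subtlety remains: the translation parameter $a$ need not lie in $F$. This is harmless, because being a morphism is a geometric property of the graph, which is Zariski closed. Alternatively, $G(F)$ is already dense and its translates of $U$ suffice. The one thing to double-check is that the exponent $k$ is uniform. It is, because $k$ is fixed in the first step and the third step never changes it.
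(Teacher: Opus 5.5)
Your argument has the same structure as the paper's proof. First you show that $\Frob^k\circ\varphi$ agrees with a rational map on a dense open set. Then you spread this over all of $G$ via $\psi(x)=\psi(a)\,\psi(a^{-1}x)$. The only difference is in the first half. You get the rational description directly from $\dcl$ being the perfect closure, evaluated at a generic point. The paper instead cites Marker's decomposition of a definable function into finitely many pieces of the form $\Frob^{-k}\circ Q_i$ and picks a piece dense in $G^0$. These are the same idea.

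There is, however, a false claim in your third step, and the rest of that step depends on it. The Frobenius twist $\Frob^k(H)$ is in general \emph{not} equal to $H$; perfectness of $F$ does not help. Twisting raises the coefficients of the defining equations, and of the formulas for the group law, to the $p^k$-th power. So the coordinatewise $p^k$-th power map sends $H$ into $\Frob^k(H)$, and this coincides with $H$ essentially only when $H$ is defined over $\FF_{p^k}$. For example, if $H$ is an elliptic curve with $j(H)\notin\FF_p$, then $\Frob(H)$ has $j$-invariant $j(H)^p\neq j(H)$, so it is not even isomorphic to $H$. Consequently:
\begin{itemize}
\item $\Frob$ is not an endomorphism of $H$;
\item your rational map $\rho$ does not take values in $H$;
\item ``left multiplication by $\psi(a)$ in $H$'' is the wrong operation.
\end{itemize}

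The repair is what the paper does. Set $H'=\Frob^k(H)$, an algebraic group over $F$ for which $\Frob^k\colon H\to H'$ is a bijective homomorphism of algebraic groups. Then $\psi=\Frob^k\circ\varphi\colon G\to H'$ is a group homomorphism, $\rho$ is a rational map $G\dashrightarrow H'$, and left multiplication by $\psi(a)$ in $H'$ is a morphism. With these changes your translation argument goes through verbatim. The conclusion is that $\psi$ is a morphism $G\to\Frob^k(H)$, which is how the lemma is meant.
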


The twist of $H$ by $\Frob^k$, which we denote by $\Frob^k(H)$, is an algebraic group satisfying that $\Frob^k\colon H \to \Frob^k(H)$ is an algebraic group homomorphism. So the lemma gives an algebraic group homomorphism $G \to \Frob^k(H)$. This argument is sketched in \cite{Poizat}; we include the details for completeness.

\begin{proof}
It suffices to prove that for each $x \in G$ there is an open neighborhood $x \in U \subset G$ and a $k \ge 0$ for which $\Frob^k \circ \varphi\restriction_U$ is a morphism: finitely many such $U$ cover $G$, and we can take the maximal $k$.

Let $C \subseteq \mathbb{K}^n$ be a Zariski-closed subset and consider a definable map $\psi\colon C \to \mathbb{K}$. By \cite[Proposition~3.2.14]{Marker} there exist finitely many sets $C_i \subseteq \mathbb{K}^n$, definable over $F$, such that $C = \bigcup_i C_i$ and $\psi \restriction_{C_i} = \Frob^{-k} \circ Q_i$ for a rational map~$Q_i$ and large enough~$k \ge 0$. Let $H = \bigcup_i Y_i$ be a finite cover by affine opens, so that for each $i$ the function $\varphi\restriction_{\varphi^{-1}(Y_i)}$ is definable from the open subset $\varphi^{-1}(Y_i)$ of $G$ to $Y_i \subset \mathbb{A}^n$ (for some $n$). Covering $\varphi^{-1}(Y_i)$ by finitely many affine opens and restricting to each in turn, the claim in \cite{Marker} gives a constructible cover of $\varphi^{-1}(Y_i)$ such that $\Frob^k \circ \varphi$ is rational on each set in the cover, (if $k$ is large enough,) because it is rational in each coordinate. Applying this to each $Y_i$, we get a finite cover $G = \bigcup_i C_i$ by constructible sets (each isomorphic to a constructible subset of an affine space) such that for each $i$ there exists $k_i$ for which $Q_i = \Frob^{k_i} \circ \varphi\restriction_{C_i}$ is rational (and defined on all of $C_i$).

As~$G^0$ is irreducible, some piece $C_j$ is dense in~$G^0$, which is open. Since $C_j$ is constructible, it contains an open subset $V$ of $G$, definable over $F$. By quasi-compactness, $G=\bigcup_{i=1}^n g_i V$ for some $g_1,\ldots,g_n \in G$. In fact, we can choose these $g_i$ in $V$: given $x \in G$, take any $z \in G$ generic over $x$. Then $g = xz$ is generic over $F$, hence in $V$, and also $z \in V$. So we have $x \in xzV = gV$. 

Denote $Q = Q_j \restriction_V$ and denote $H' = \Frob^k(H)$.
For each $z \in gV$, writing $z = g\cdot(g^{-1}z)$ we have 
\[
\Frob^k \circ \varphi(z) = Q(g)\cdot_{H'} Q(g^{-1}\cdot_G z).
\]
This is evidently a morphism.
\end{proof}

\begin{lemma}\label{lem:action_of_quotient}
    Let $G,E$ be algebraic groups and let $\pi\colon G \to E$ be a surjective homomorphism of algebraic groups. Let $f\colon G\times X \to X$ be an algebraic action of $G$ on a variety $X$, and assume that $f(g,x)$ is constant on each coset of $\ker\pi$. Then for each $x \in X$ there exists an integer $k\ge 0$ and a morphism of varieties $\overline{f_k}\colon E \to \Frob^k(X)$ such that $\overline{f_k} \circ \pi = \Frob^k \circ f(\cdot, x)$.
\end{lemma}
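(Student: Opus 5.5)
The natural first step is to define the induced map set-theoretically and show it is definable; then \Cref{lemma: Def hom}-style arguments upgrade it to a morphism after composing with a Frobenius power. Fix $x \in X$ and put $\overline f\colon E \to X$, $\overline f(e) \defas f(g,x)$ for any $g \in \pi^{-1}(e)$. This is well defined because $f(\cdot,x)$ is constant on cosets of $\ker\pi$, and $\pi$ is surjective. The graph of $\overline f$ is the image of the constructible set $\{(\pi(g), f(g,x)) \mid g \in G\}$, so it is constructible by quantifier elimination (Chevalley). Thus $\overline f$ is a definable map with $\overline f \circ \pi = f(\cdot,x)$. It is definable over $F(x)$, and we may enlarge the base to an algebraically closed field containing $x$.

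Next, run the first half of the proof of \Cref{lemma: Def hom} for $\overline f$ in place of $\varphi$. Cover $X$ by affine opens $Y_i$ and $E$ by affine opens. By \cite[Proposition~3.2.14]{Marker}, obtain a finite constructible cover $E = \bigcup_i C_i$ and integers $k_i$ such that $\Frob^{k_i}\circ \overline f\restriction_{C_i}$ is a rational map defined on $C_i$. Take $k$ to be the maximum of the $k_i$. Since $\Frob^{k}$ is a morphism of varieties $X \to \Frob^k(X)$, the map $\overline{f_k} \defas \Frob^k \circ \overline f$ is piecewise a morphism. The identity $\overline{f_k}\circ\pi = \Frob^k\circ f(\cdot,x)$ holds by construction.

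The main obstacle is gluing these pieces into a global morphism, since we no longer have a homomorphism to translate with. Here I would use the action structure in place of the homomorphism property. As in \Cref{lemma: Def hom}, some piece contains a dense open $V \subseteq E^0$ on which $Q \defas \Frob^k\circ\overline f\restriction_V$ is a morphism. Cover $E$ by translates $eV$ with $e \in V$, using the same generic argument. For $e' \in eV$, choose $g \in \pi^{-1}(e)$ and write $e' = e\cdot(e^{-1}e')$. Lifting, we get
\[ \overline f(e') = f\bigl(g,\, \overline f(e^{-1}e')\bigr), \]
because $f(gh,x) = f(g,f(h,x))$ for any lift $h$ of $e^{-1}e'$. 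Hence
\[ \Frob^k\circ\overline f(e') = f^{(k)}\bigl(\Frob^k(g),\, Q(e^{-1}e')\bigr), \]
where $f^{(k)}$ is the twisted action of $\Frob^k(G)$ on $\Frob^k(X)$, which is again a morphism. For fixed $g$, this expression is a morphism in $e'$ on $eV$. These local morphisms agree on overlaps because they all equal $\overline{f_k}$ pointwise, so they glue to a morphism $E \to \Frob^k(X)$.

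I expect the delicate points to be two. First, the open $V$ must be chosen inside the region where a single $k$ works. Second, I should check that translating the constant $g$ keeps everything algebraic; it does, because left multiplication by a fixed $g$ in the action is a morphism of $X$.
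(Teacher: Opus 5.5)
Your proposal is correct and follows essentially the same route as the paper. You define $\overline f$ on $E$ via lifts, use the argument of \Cref{lemma: Def hom} to find a dense open $V$ on which $\Frob^k\circ\overline f$ is a morphism, and transport this to each translate $e_iV$ via $\overline f(z) = f(g_i,\overline f(e_i^{-1}z))$ and the Frobenius-twisted action. Your twisted action $f^{(k)}(\Frob^k(g),\cdot)$ is the same map as the paper's $f_k(g,\cdot)$, and your extra remarks (constructibility of the graph via Chevalley, working over a base containing $x$) just fill in details the paper leaves implicit.
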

This is similar to \cite[p.~82, claim~7]{Poizat}.
\begin{proof}
    Fix $x \in X$ and denote by $\overline{f}\colon E \to X$ the (definable) function satisfying $\overline{f}(\pi(g)) = f(g,x)$ for all $g \in G$. As in \Cref{lemma: Def hom}, there exists an open subset $V \subseteq E$ and an integer $k>0$ such that $\Frob^k \circ \overline{f}\restriction_V$ is a morphism of algebraic varieties, and there is a finite open cover $E = \bigcup_i e_i V$ (with $e_i \in E$ for each $i$). For each $e_i$ choose a preimage $g_i \in G$. Now if $z \in e_i V$ then
    \[\overline{f}(z) = f(g_i, \overline{f}(e_i^{-1}z)).\]
    Denote by $f_k\colon G\times \Frob^k(X) \to \Frob^k(X)$ the action given by $f_k(g,x) = \Frob^k(f(g, \Frob^{-k}(x)))$, and observe that this action is algebraic. Therefore
    \[\Frob^k \circ \overline{f}(z) = \Frob^k \left( f(g_i, \overline{f}(e_i^{-1}z)) \right) =  f_k(g_i, \Frob^k \circ \overline{f}(e_i^{-1}z)).\]
    For all $z \in e_i V$ we have $e_i^{-1}z \in V$, and $\Frob^k \circ \overline{f}\restriction_V$ is a morphism of varieties. So by the equation above, $\Frob^k \circ \overline{f}\restriction_{e_i V}$ is a morphism of algebraic varieties for all $i$, hence $\Frob^k \circ \overline{f}$ is a morphism on all $E$.
\end{proof}

\subsection{Evans--Hrushovski planes}
\label{sec: EH planes}

\begin{figure}
\begin{tikzpicture}[every node/.style={fill, circle, thick, inner sep=3pt, outer sep=0pt}, every path/.style={thick}]
  \node [label={[yshift=-0.4ex]above:$Y_\infty$}] (Yinf) at (0,2) {};
  \node [label={left:$O$}] (O) at (-3,-2) {};
  \node [label={right:$X_\infty$}] (Xinf) at (3,-2) {};
  \node [label={left:$Y_1$}] (Y1) at (-1.5,0) {};
  \node [label={below:$X_1$}] (X1) at (0,-2) {};
  \node [label={right:$Q$}] (Q) at (1.5,0) {};

  \node [label={[xshift=-0.4ex, yshift=1ex]right:$P$}] (P) at (intersection of  Y1--Xinf and X1--Yinf) {};

  \draw (X1) -- (P) -- (Yinf);
  \draw (Y1) -- (P) -- (Xinf);

  \draw (Yinf) -- (Y1) -- (O) -- (X1) -- (Xinf) -- (Q) -- (Yinf);
  \draw (Y1) to[out=270, in=180] (X1) to[out=0, in=270] (Q);
\end{tikzpicture}
\caption{Start with a projective basis $O, P, X_\infty, Y_\infty$ inside a projective plane. The intersection of the lines $Y_\infty \vee P$ and $O \vee X_\infty$ defines a point $X_1$ which determines a scale on the $X$-axis; the point $Y_1$ is constructed similarly. Finally, $Q$ is the intersection of $X_1 \vee Y_1$ and the line at infinity $X_\infty \vee Y_\infty$. Forgetting the point $P$ yields a group configuration.}
\label{fig: projective group conf}
\end{figure}

To an extension $\mathbb{K}/F$ of algebraically closed fields there is an associated combinatorial geometry $\mathcal{A}(\mathbb{K}/F)$ called the \emph{full algebraic matroid}. Its ground set is $\mathbb{K}$ and its closure operator sends any subset $A \subseteq \mathbb{K}$ to the algebraic closure $\acl(A) \defas \ol{F(A)} \subseteq \mathbb{K}$. The rank of a subset $A$ is the transcendence degree of $F(A)$ over $F$. Consider the flats of rank one as points, flats of rank two as lines, and so on. Evans and Hrushovski \cite{EH} showed that every set of points in $\mathcal{A}(\mathbb{K}/F)$ which satisfies the axioms of a projective plane can be coordinatized by a skew~field.
The key observation is that a projective basis in a projective plane yields a group configuration (cf.~\Cref{fig: projective group conf}). The associated one-dimensional group $G$ is commutative and the $F$-definable endomorphisms of $G$ form a~ring $\End_{F}(G)$. The Ore skew field of fractions of this ring coordinatizes the projective plane.
\begin{remark}
    In this paper, the elements of $\End_F(G)$ are $F$-definable endomorphisms which are not necessarily morphisms of algebraic varieties (for instance, if $G$ is defined over $\FF_p$, the inverse of the Frobenius is in this ring).

    We'll use the notation $\End_F^\mathrm{alg}(G)$ to denote those endomorphisms which are morphisms of algebraic varieties. This conflicts slightly with the usage in Evans--Hrushovski, but the conflict is not essential. The ring $\End_F(G)$ always contains $\End_F^\mathrm{alg}(G)$ and is contained in its Ore skew field of fractions. Hence the fields of fractions of these two rings are identical.
\end{remark}

\begin{theorem}[{Planar case of \cite[Theorem~3.3.1]{EH}}]
If $\Pi$ is the set of points of a projective plane in $\mathcal{A}(\mathbb{K}/F)$ then there exists a one-dimensional, connected algebraic group $(G, *)$ and independent generics $x, y, z \in G$ such that
\[
  \Pi \subseteq \Pi(G\colon x,y,z) \defas \{\, \acl(\alpha(x) * \beta(y) * \gamma(z)) \mid \text{$\alpha, \beta, \gamma \in \End_{F}(G)$ not all zero} \,\}.
\]
This plane is isomorphic to the projective plane over the skew field of fractions of $\End_F(G)$ via
\[\acl(\alpha(x) * \beta(y) * \gamma(z)) \mapsto [\alpha : \beta: \gamma].\]
\end{theorem}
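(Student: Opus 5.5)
This is the planar case of Evans--Hrushovski, so I would not reprove their theorem in full. Instead I would recall its proof structure in the language set up above: a projective basis of $\Pi$ gives a group configuration, the group configuration theorem gives a one-dimensional group, and the von Staudt arithmetic on $\Pi$ matches endomorphism arithmetic of $G$.

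\emph{Step 1 (the group).} Fix a projective basis $O,P,X_\infty,Y_\infty$ of $\Pi$ and construct $X_1,Y_1,Q$ as in \Cref{fig: projective group conf}. Choose elements of $\mathbb{K}$ generating each of these rank-one flats. The points and lines of $\Pi$ are flats of rank one and two, and incidence is containment, so the six points $O,X_1,X_\infty,Y_1,Y_\infty,Q$ satisfy the rank conditions of a group configuration. Non-collinear triples have rank $3$ because $\Pi$ is a plane inside a rank-three flat. Collinear triples have pairwise interalgebraic pairs because any two points span the same line, which has rank $2$. By the lemma relating condition (3) to canonical bases, \Cref{thm: group config} applies, together with \Cref{rem: algebraic group} and the corollary upgrading the homogeneous space. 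This yields a connected one-dimensional algebraic group $G$, over $F$ enlarged by independent parameters, and a configuration in $G$ interalgebraic with the given points. One-dimensional connected algebraic groups are commutative ($\Ga$, $\Gm$ or an elliptic curve), so $\End_F(G)$ is a ring. Since $\End_F(G)$ embeds in the fraction field of $\End^{\mathrm{alg}}_F(G)$, it is an Ore domain. Take $x,y,z$ to be generics attached to $X_\infty$, $Y_\infty$ and $O$ respectively.

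\emph{Step 2 (coordinates).} Show that every point of $\Pi$ has the form $\acl(\alpha(x)*\beta(y)*\gamma(z))$. Take a point $R$ on the line $O\vee X_\infty$, which corresponds to $\acl(x,z)$. Since $R$ has rank $1$ and lies inside $\acl(x,z)$ while being independent from the relevant points, the type of a generator of $R$ over $(x,z)$ produces a strongly minimal family. I would then argue, as in Evans--Hrushovski, that $r\in\acl(x,z)$ generic of rank $1$ forces $r$ to be interalgebraic with $\alpha(x)*\gamma(z)$ for some quasi-endomorphism. The route is to apply the group configuration theorem to $\{x,z,r\}$ together with translates, and to use the uniqueness up to isogeny of the group (cited forward as \Cref{thm: quasi-iso}) to conclude that the induced correspondence $G\to G$ is a subgroup of $G\times G$ of finite index fibers. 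Such a subgroup is the graph of $\psi^{-1}\alpha$. Points off the axes are then handled by intersecting lines through $X_\infty$ and $Y_\infty$.

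\emph{Step 3 (isomorphism).} Verify that the map $[\alpha:\beta:\gamma]$ is well defined, since scaling by a nonzero endomorphism preserves $\acl$ because endomorphisms have finite kernels. Then show that it respects collinearity: three such points are collinear exactly when the coordinate matrix is singular over the skew field, which is a rank computation in $\acl(x,y,z)$ using independence of $x,y,z$. This gives the identification with the projective plane over the Ore field.

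I expect the main obstacle to be Step 2. It requires showing that an arbitrary rank-one flat inside $\acl(x,z)$ comes from a definable quasi-endomorphism, not from some non-group correspondence. That is where the real Evans--Hrushovski content lies, and I would cite \cite[Theorem~3.3.1]{EH} there rather than reprove it.
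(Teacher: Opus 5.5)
\paragraph{The gap is in Step 2.} Step 1 matches the outline the paper gives before citing \cite{EH}: a projective basis yields a group configuration whose one-dimensional group is commutative. The paper does not reprove the statement, and your Step 3 is routine. Step 2, however, rests on a claim that is false. A rank-one element $r\in\acl(x,z)$, independent of $x$ and of $z$, need not be interalgebraic with any $\alpha(x)*\gamma(z)$.

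For a counterexample, take $G=\Ga$ and $r=xz$, the product in the field. Then $r$ has rank one and $\acl(r,x)=\acl(r,z)=\acl(x,z)$. Suppose $\alpha(x)+\gamma(z)\in\ol{F(xz)}$. Compose with a power of $\Frob$ so that $\alpha,\gamma$ become additive polynomials $A,C$. The Laurent polynomial $A(X)+C(xz/X)$ then takes a value in $\ol{F(xz)}$ at $X=x$, which is transcendental over that field. So it is constant, and this forces $A=C=0$.

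Consequently no argument that only looks at $x$, $z$ and $r$ can work. ``Applying the group configuration theorem to $\{x,z,r\}$ together with translates'' has no input: three collinear points are not a group configuration. \Cref{thm: quasi-iso} needs a second group triple that is elementwise interalgebraic with a known one. The obstacle you name cannot be overcome; what must be used is that $R$ is a point of the plane $\Pi$.

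\paragraph{The missing idea.} Use the incidence axioms of $\Pi$ to place $R$ in several group configurations. Let $g,h,k$ be the elements at $O,X_\infty,Y_\infty$ of the reference configuration $(X_1,Y_1,Q,O,X_\infty,Y_\infty)$. Let $R\in O\vee X_\infty$ with $R\neq O,X_\infty$, and put $R'=(R\vee Q)\cap(O\vee Y_\infty)$ and $R''=(R\vee Y_1)\cap(X_\infty\vee Y_\infty)$. These are points of $\Pi$ because $\Pi$ is a plane, and $R\neq O,X_\infty$ makes the configurations below non-degenerate.

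Then $(Y_1,X_1,Q)$, $(R',R,Q)$ and $(Y_1,R,R'')$ are three group configurations on the triangle $O,X_\infty,Y_\infty$, pairwise sharing a side point. This is exactly \Cref{fig: triple compact} with $R$ in the role of $b'$. \Cref{thm: quasi-aut} gives a quasi-automorphism $\Phi$ of $G$ with $R=\acl(\Phi(g)^{-1}h)$. \Cref{prop: quasi-aut as skew field} writes $\Phi$ as $\psi^{-1}\varphi$ with $\varphi,\psi\in\End_F(G)$, so $R=\acl(\varphi(g)^{-1}*\psi(h))$. Your treatment of points off the axes then goes through.

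One auxiliary point alone is not enough. For $G=\Ga$ in odd characteristic, $\acl(h-g^2)$ and $\acl(k-g^2)$ lie on the two axes and are collinear with $Q=\acl(h-k)$. Yet $\acl(h-g^2)$ is not of the form $\acl(\alpha(g)+\beta(h))$.

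\paragraph{A smaller point.} \Cref{thm: group config} produces $G$ only after adjoining independent parameters, whereas the statement is about $\End_F(G)$. Since $F$ is algebraically closed, it is a model, and nonforking extensions of types over $F$ are finitely satisfiable in $F$. So the parameters can be chosen inside $F$, and you should say this.
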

\begin{remark}
    In the main body of the paper, we think of algebraic matroids slightly differently than this. For us, a point of an algebraic matroid will simply be a field element $x \in \mathbb{K}$, rather than the entire algebraically closed field $\overline{F(x)}$. The difference is that we can then have parallel points (we are no longer considering simple matroids). This is convenient because we do algebra with specific elements of fields and specific points on algebraic varieties; it would not be productive to identify interalgebraic elements, or to identify all $F$-definable points of all algebraic varieties with the trivial flat.
\end{remark}
We review the groups that can appear and their endomorphism rings, following \cite[Section~3]{EH} and its references. A one-dimensional connected algebraic group definable over an algebraically closed field~$F$ is either the additive group~$\Ga$, the multiplicative group~$\Gm$, or an elliptic curve.
Each of the possible groups~$G$ is commutative and hence $\End_{F}(G)$ is a ring with pointwise addition and where multiplication is composition of endomorphisms.
If $\varphi \in \End_{F}(G)$ is not zero then it is an endogeny (i.e. an endomorphism which is an isogeny): its image must have dimension one and since the image of a homomorphism is closed, $\varphi$ is surjective and has a finite kernel. It turns out that each possible $\End_{F}(G)$ satisfies the left Ore condition and therefore embeds into a skew field of fractions~$L_{F}(G)$. The~following skew fields arise in positive characteristic~$p$:
\begin{itemize}[itemsep=0.4em]
\item If $G \simeq \Ga$ then each element of $\End_F^\mathrm{alg}(G)$ is a $p$-polynomial, i.e. a polynomial function of the form $x \mapsto \sum_i c_i x^{p^i}$.
Thus $\End^\mathrm{alg}_F(\Ga)$ is isomorphic to the skew polynomial ring $F[x;\Frob]$: its elements are the polynomials with coefficients in $F$, and
a~skew polynomial $\sum_{i=0}^m c_i x^i$, with $c_i \in F$, is interpreted as the endomorphism sending $a \in \Ga$ to $\sum_{i=0}^m c_i \Frob^i(a)$. Here addition is addition of polynomials and multiplication is composition of endomorphisms. This ring is non-commutative: for $c \in F$, $x c = \Frob(c) x$.
We denote its Ore skew field of fractions by $L_F(\Ga) = F(x;\Frob)$.

\item If $G = \Gm$ then $\End_F^\mathrm{alg}(G) = \ZZ$ with (skew) field of fractions $L_{F}(\Gm) = \QQ$.

\item If $G$ is an elliptic curve, the resulting skew field may be $\QQ$, an imaginary quadratic extension $\QQ(\sqrt{d})$, where $d < 0$, %
or the quaternion algebra~$B_{p,\infty}$ which is ramified at exactly $p$~and~$\infty$. The endomorphism ring itself is an order in the respective $\QQ$-algebra.
\end{itemize}

\begin{figure}
\begin{tikzpicture}[every node/.style={fill, circle, thick, inner sep=3pt, outer sep=0pt}, every path/.style={thick}]
  \node [label={[yshift=-0.4ex]above:$Y_\infty$}] (Yinf) at (0,2) {};
  \node [label={left:$O$}] (O) at (-3,-2) {};
  \node [label={right:$X_\infty$}] (Xinf) at (3,-2) {};
  \node [label={left:$Y_1$}] (Y1) at (-1,0.66) {};
  \node [label={left:$Y$}] (X') at (-2,-0.66) {};
  \node [label={below:$X_1$}] (X1) at (1,-2) {};
  \node [label={below:$X$}] (X) at (-1,-2) {};
  \node [label={right:$Q$}] (Q) at (1.5,0) {};

  \draw[thick] (Yinf) -- (Y1) -- (X') -- (O) -- (X) -- (X1) -- (Xinf) -- (Q) -- (Yinf);
  \draw[thick] (Y1) to[out=30, in=130] (Q) to[out=290, in=5] (X1);
  \draw[thick] (X') to[out=40, in=130] (Q) to[out=285, in=0] (X);
\end{tikzpicture}
\caption{Two superimposed group configurations $(X,Y,Q,O,X_\infty,Y_\infty)$ and~$(X_1,Y_1,Q,O,X_\infty,Y_\infty)$.}
\label{fig: endo labeling}
\end{figure}

It is important for us to be able to construct point-and-line configurations such that in any algebraic realization, some specified subset lies in a projective plane. Consider an algebraic realization of the configuration in \Cref{fig: projective group conf}. Since it contains a group configuration, there is an associated algebraic group $G$ such that the configuration is part of a maximal projective plane $\Pi = \Pi(G\colon g,h,k)$, where $g,h,k\in G$ are independent generics satisfying 
\[\acl(g) = O, \quad \acl(h) = X_\infty,\quad  \text{and}\quad \acl(k) = Y_\infty,\]
and such that $X_1 = \acl(g^{-1} h)$ i.e., $X_1$ has homogeneous coordinates $[-1 : 1 : 0]$ as a point in~$\Pi$.

Now let $X$ be some point in $\mathcal{A}(\mathbb{K}/F)$ which lies on the line~$\acl(O, X_\infty)$. In general, $X$ need not be part of~$\Pi$. But Evans--Hrushovski prove that $X \in \Pi$ if we assume, in addition, that the line $X \vee Q$ intersects $O \vee Y_\infty$ in some point~$Y$ (as in \Cref{fig: endo labeling}). This is related to our \Cref{thm: quasi-aut}, which also works for higher dimensional groups. This comes at the cost of replacing the coordinatizing skew field with a group (the group of quasi-automorphisms). See the discussion at the beginning of \Cref{sec: quasi-aut labels} and \Cref{rem: quasi-aut coordinates} for the correspondence between quasi-automorphisms and homogeneous coordinates in the one-dimensional case.

Thus, the matroid gadget of \Cref{fig: endo labeling} can be used to introduce variables from the skew field $L_{F}(G)$. By using von~Staudt constructions inside of a group configuration, any system $f_1(x) = \dots = f_n(x) = 0$ of Diophantine equations can be encoded in a matroid $M$ such~that: $M$~is~algebraic if and only if there is some one-dimensional, connected algebraic group~$G$ definable over~$\ol{\FF_p}$ such that $f_1(x) = \dots = f_n(x) = 0$ has a solution over~$L_{\ol{\FF_p}}(G)$.
See \cite[Section~3]{EH} and \cite{SkewVonStaudt} for further details about von~Staudt constructions.

The group configuration theorem by itself does not predict which of the possible groups underlies a given algebraic representation. By~using von~Staudt constructions, we can impose the solvability of certain Diophantine equations in~$L_{\ol{\FF_p}}(G)$. This mechanism gives some control over the groups and skew fields which appear.
Using only positive existential formulas one cannot hope to single out $\QQ$, for instance, because any equation with a solution in $\QQ$ would also have a solution in quadratic extensions and the quaternion algebra. Even so, it is not hard to show that recognizing algebraic matroids is at least as difficult as Hilbert's tenth problem over $\QQ$, that is, deciding whether a system of polynomial equations has a rational solution. This follows straightforwardly from \cite{EH} but does not seem to have been stated explicitly.

\begin{proposition} \label{prop: picking skew field}
Let $p > 0$ be a fixed prime. Solvability of Diophantine equations over $\QQ$ effectively reduces to recognizing algebraic matroids in characteristic $p$.
\end{proposition}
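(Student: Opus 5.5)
The plan is to combine the Evans--Hrushovski encoding of \Cref{sec: EH planes} with a gadget that excludes $\Ga$, and with a modification of the input system that leaves solvability over $\QQ$ unchanged but forces every solution in any admissible skew field to come from a rational one.

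\textbf{Step 1: the basic matroid.} Given a system $E\colon f_1(y)=\dots=f_n(y)=0$ with integer coefficients, I would build, as in \Cref{sec: EH planes}, a finite point--line configuration $M_E$. It consists of the projective basis of \Cref{fig: projective group conf}, one copy of the gadget of \Cref{fig: endo labeling} for each variable $y_i$ (which places a point $[-y_i:1:0]$ of $\Pi(G\colon x,y,z)$ on the line $O\vee X_\infty$), and von Staudt constructions for the sums and products occurring in the $f_j$. The ranks are prescribed so that the coincidences $f_j(y)=0$ are forced. By the planar case of \cite[Theorem~3.3.1]{EH} quoted above, $M_E$ is algebraic in characteristic $p$ if and only if $E$ is solvable in $L_F(G)$ for some one-dimensional connected group $G$ over an algebraically closed $F$ of characteristic $p$. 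The direction ``solvable $\Rightarrow$ algebraic'' comes from realizing inside $\Pi(G\colon x,y,z)$. The converse reads off coordinates.

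\textbf{Step 2: excluding $\Ga$.} I would add the von Staudt points $[-k:1:0]$ for $k=1,\dots,p$, built from $[-1:1:0]$ by repeated addition. I would then require $[-p:1:0]$ to be distinct from $O=[0:1:0]$ as a rank condition: the pair is declared to have rank $2$. This forces $p\neq 0$ in $L_F(G)$, so $L_F(G)$ has characteristic~$0$ and $G\not\simeq\Ga$. The remaining skew fields are $\QQ$, imaginary quadratic fields $\QQ(\sqrt d)$, and the definite quaternion algebra $B_{p,\infty}$. All of them are definite: they embed in $\mathbb R$, $\mathbb C$ or Hamilton's quaternions $\mathbb H$, and their reduced norm is positive definite. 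The direction from a rational solution to a realization still works with $G=\Gm$, since $L_F(\Gm)=\QQ$ and $p\neq0$ there.

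\textbf{Step 3: forcing rationality (main obstacle).} What remains is an effective transformation $E\mapsto E'$ with two properties. First, $E'$ is solvable in $\QQ$ iff $E$ is. Second, if $E'$ is solvable in any such definite $D$, then $E$ is solvable in $\QQ$. Positive existential conditions cannot cut $\QQ$ out of $D$ directly, so I would exploit a property shared by all candidate $D$. My first attempt would use that each $D$ is a finite-dimensional $\QQ$-algebra of dimension $1$, $2$ or~$4$. I would write each variable as $y_i=\sum_{k} y_{ik}e_k$ in terms of new variables $e_1,\dots,e_4$ constrained by multiplication-table equations. I would then try to pass from a $D$-solution to a rational point, either through the reduced trace, or by viewing $E$ over $D\otimes\mathbb R\subseteq\mathbb H$ and using that the rational structure of $D$ is recoverable.

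If that fails, I would instead first reduce Hilbert's tenth problem over $\QQ$ to a variant in which solvability in $\QQ$ is equivalent to solvability in every subfield of $\mathbb H$ containing~$\QQ$. One route is to replace $E$ by a system whose $\mathbb H$-solutions are forced to commute pairwise and to be fixed by quaternion conjugation, using the definiteness of the norm via sum-of-squares identities. This step is where I expect essentially all the difficulty to lie. Once it is in place, the effective map $E\mapsto M_{E'}$ (with the Step~2 gadget attached) is the required reduction.
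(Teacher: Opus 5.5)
Your Steps~1 and~2 agree with the paper: declaring that $[-p:1:0]$ and $O$ span rank $2$ is the paper's condition $\exists t_1\colon t_1p=1$. But Step~3, which you yourself say carries all the difficulty, is left open, so the reduction is not established. The routes you sketch for it do not work as stated:
\begin{itemize}
\item Expanding $y_i=\sum_k y_{ik}e_k$ only helps if the $y_{ik}$ are constrained to $\QQ$. That is exactly the definability problem at issue.
\item A solution in $D$ does not yield a rational one via the reduced trace: $y^2+1=0$ is solvable in $\QQ(\sqrt{-1})$ but not in $\QQ$.
\item Pairwise commuting solutions can all lie in a single quadratic subfield, so commutation does not force rationality.
\item Definiteness of the norm is a property of $D\otimes\mathbb R$ that equations over $D$ cannot detect. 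Already $-1$ is a sum of squares in every imaginary quadratic candidate and in $B_{p,\infty}$. Conjugation and the reduced norm are not polynomial operations unless you already have the center.
\end{itemize}

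The missing idea is that you need no transformation $E\mapsto E'$ working uniformly over all candidate $D$. Instead, shrink the list of candidates with one more positive existential condition: non-commutativity, $\exists u,v,t\colon t(uv-vu)=1$. This is encoded by von Staudt constructions like any other equation.
\begin{itemize}
\item Together with $p\neq0$, this leaves exactly one possibility, $D=B_{p,\infty}$.
\item In $B_{p,\infty}$, $\QQ$ is existentially definable as the center. Fix computable integers $a,b<0$ with $B_{p,\infty}\cong(a,b)_{\QQ}$. Add variables $i,j$ with $i^2=a$, $j^2=b$, $ij=-ji$, so that $1,i,j,ij$ is a $\QQ$-basis. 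Then require $iy_k=y_ki$ and $jy_k=y_kj$ for every variable $y_k$ of $E$. Alternatively, in a quaternion division algebra the common centralizer of any two non-commuting elements is already the center.
\item The realization direction can then no longer use $G=\Gm$ as in your Step~2. It needs Deuring's theorem that $B_{p,\infty}$ occurs as $L_F(G)$ for a supersingular elliptic curve $G$ over $\ol{\FF_p}$.
\end{itemize}
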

(It isn't known whether Hilbert's tenth problem over $\QQ$ is undecidable, so our work doesn't end here.)
\begin{proof}
$B_{p,\infty}$ is the only non-commutative skew field not of characteristic~$p$ obtainable as the Ore skew field of fractions of $\End_F(G)$, where $F$ is any field of characteristic $p$. Thus, all other skew fields can be ruled out via the Diophantine sentence $(\exists t_1: t_1 p = 1) \land (\exists x,y, t_2: t_2 (xy - yx) = 1)$. A theorem of Deuring (see~\cite[Chapter~42]{Voight}) ensures that $B_{p,\infty}$ really does appear as the skew field of a supersingular elliptic curve over $\ol{\FF_p}$ for any $p$.

Observe that $\QQ$ is the center of~$B_{p,\infty}$. Depending on the residue class of $p \bmod 4$ one can compute integers $a, b < 0$ such that whenever $i,j \in B_{p,\infty}$ satisfy the formula $i^2 = a \land j^2 = b \land ij = -ji$, then $\{1,i,j,ij\}$ is a $\QQ$-basis of $B_{p,\infty}$; see \cite[Example~14.2.13 \& Section~2.2]{Voight}. Then the center $\QQ$ is existentially defined as the set of solutions $x$ to $ix = xi \land jx = xj$. With this gadget, the solvability of Diophantine equations over $\QQ$ can be encoded in algebraicity of matroids over any characteristic~$p > 0$.
\end{proof}

\begin{remark}
The proof breaks down in characteristic $p = 0$. If $F$ is an algebraically closed extension of $\QQ$ then $L_{F}(\Ga) = F$ and the other skew fields which can arise from $\Gm$ and elliptic curves are contained in $\{\, \QQ \,\} \cup \{\, \QQ(\sqrt{d}) : d \in \ZZ, d < 0 \,\}$. Hence, they are all contained in~$F$. In~fact, it is known that a matroid is algebraic in characteristic zero if and only if it is linear over~$\mathbb{C}$, and this is decidable. %
\end{remark}

\section{Quasi-isomorphisms and quasi-epimorphisms}
\label[section]{sec: quasi auts}

The coordinatization result of Evans--Hrushovski relies on the fact that connected algebraic groups of rank one are commutative, so that their endomorphisms form a ring. Algebraic groups of rank greater than one need not be commutative and their endomorphisms (or endogenies), in general, need not be closed under pointwise ``addition''. What remains of the endomorphism ring is the multiplicative structure. In this section we develop the necessary generalizations of the Evans--Hrushovski machinery in terms of \emph{quasi-isomorphisms}~\cite{QuasiEndo}.

\subsection{The basic setup}
We define quasi-isomorphisms and quasi-epimorphisms. We then relate the group of quasi-automorphisms of a one-dimensional algebraic group $G$ to the skew field of fractions of its endomorphism ring, as described in \cite{EH}.

\begin{definition}
Let $G$ and $H$ be definable groups. A \emph{quasi-epimorphism} $G \to H$ is a definable subgroup $\Phi \le G \times H$ such that each of the projection maps $\pi_G\restriction_\Phi\colon \Phi \to G$ and $\pi_H\restriction_\Phi\colon \Phi \to H$ is surjective and $\pi_G\restriction_\Phi$ has finite kernel.
If,~additionally, $\pi_H\restriction_\Phi$ has finite kernel then $\Phi$ is a \emph{quasi-isomorphism}. A~quasi-isomorphism $G \to G$ is a \emph{quasi-automorphism} of~$G$.
\end{definition}

\begin{remark}\label[remark]{rem:quasi_iso_as_map}
A quasi-isomorphism is a finite-to-finite correspondence whose projection maps $\Phi \to G$ and $\Phi \to G'$ are isogenies. Therefore, two groups are quasi-isomorphic if and only if they are isogenous. Another way to view a quasi-isomorphism is as an ``isomorphism up to finite indeterminacy''.
If~$N,N'\triangleleft \Phi$ are the kernels of the projections to $G$ and $G'$ respectively, $\Phi$ induces a surjection
\[
  G \simeq \Phi/N\twoheadrightarrow \Phi/(N\cdot N') \simeq (\Phi/N')/(N/N\cap N') \simeq G'/\pi_{G'}(N)
\]
and similarly it induces a surjection $G'\twoheadrightarrow G/\pi_{G}(N')$.
These surjections induce mutually inverse isomorphisms $G/\pi_{G}(N')\simeq \Phi/(N\cdot N')\simeq G'/\pi_{G'}(N)$.

Conversely, if $K\triangleleft G$ and $K'\triangleleft G'$ are finite normal subgroups and we have an isomorphism $\varphi\colon G/K \isoto G'/K'$ then $\Phi = \Set{(g,g')\in G\times G' \mid \varphi(gK) = g'K'}$ is a quasi-isomorphism.
Note that the kernel of the coordinate projection $\Phi \rightarrow G$ is $N = \Phi\cap\left(\{e_{G}\}\times G'\right) = \{e_{G}\}\times K'$, and similarly the kernel of the projection $\Phi \rightarrow G'$ is $N' = K\times\{e_{G'}\}$.
\end{remark}

\begin{remark}
If $\Phi$ is a quasi-isomorphism, the induced map $G \to G'/\pi_{G'}(N)$ is a definable group homomorphism between algebraic groups. However, it is not necessarily a morphism of algebraic groups. For example, $\Phi$ may be the graph of the inverse Frobenius map on $G = G' = \Ga$. 
The inverse of the Frobenius is essentially the only such example: composing with a high enough power of the Frobenius makes any definable group homomorphism as above into a morphism of algebraic varieties. See \Cref{lemma: Def hom} (or \cite[Section~4.5]{Poizat}).
In particular, definable group homomorphisms are Zariski-continuous.%
\end{remark}

\begin{definition}
Two quasi-isomorphisms $\Phi, \Psi \le G \times G'$ are \emph{equivalent}, denoted $\Phi \bumpeq \Psi$, if there exists a finite normal subgroup $N \triangleleft G \times G'$ such that
\[\Phi N = \Psi N.\]
\end{definition}

\begin{remark}
Note that if instead we have $\Phi M = \Psi N$ for finite normal $M,N \triangleleft G \times G'$ then, by replacing both $M,N$ with $M\cdot N$, we obtain that $\Phi\bumpeq \Psi$. We also have that for connected groups $G,G'$, $\Phi \bumpeq \Psi$ if and only if $\Phi^0 = \Psi^0$ as subgroups of $G \times G'$, and $\bumpeq$ is an equivalence relation.
\end{remark}

Quasi-isomorphisms $\Phi \le G \times G'$ and $\Psi \le G' \times G''$ can be composed:
\[
  \Psi \circ \Phi = \Set{ (x,z) \in G \times G'' \mid \exists y \in G': (x,y) \in \Phi \text{ and } (y,z) \in \Psi }.
\]
It is routine to check that this is a quasi-isomorphism $G \to G''$. The inverse of $\Phi$ is the opposite relation $\Phi^{-1} = \Set{ (y,x) \in G' \times G \mid (x,y) \in \Phi }$. These operations turn the set $\QAut(G)$ of equivalence classes of quasi-automorphisms of $G$ into a group whose identity is the equivalence class of the diagonal $\Delta = \Set{ (x,x) \in G \times G }$.

\begin{notation}
    By an abuse of notation, we sometimes treat quasi-isomorphisms as functions in the following way. Let $\Phi \le G \times G'$ be a quasi-isomorphism. By \Cref{rem:quasi_iso_as_map}, we obtain a well-defined homomorphism $G \to G'/\pi_{G'}(N)$, where $N = \ker[\pi_G\restriction_\Phi\colon \Phi \to G]$. Temporarily denote this homomorphism $f_\Phi$. For $x \in G$ and $y\in G'$ we denote the statement $f_\Phi(x) = y \cdot \pi_{G'}(N)$~by
    \[
      \Phi(x)\bumpeq y.
    \]
    Let $\Psi \le G' \times G''$ be another quasi-isomorphism. From $\Psi$ we get an additional homomorphism $f_\Psi\colon G' \to G'' / \pi_{G''}(M')$, where $M' = \ker[\pi_{G'} \restriction_\Psi \colon \Psi \to G']$. We then have a well-defined homomorphism
    \[f_{\Psi \circ \Phi}\colon G \to G''/\left(f_\Psi(\pi_{G'}(N))\cdot \pi_{G''}(M')\right).\]
    Using this, we apply function notation to $\Psi\circ \Phi$, i.e., we use $(\Psi \circ \Phi)(x) \bumpeq y$ to denote the situation where $f_{\Psi \circ \Phi}(x) = y \cdot f_\Psi(\pi_{G'}(N))\cdot \pi_{G''}(M')$.

    This generalizes to various other situations. For instance, we use notation such as $\Phi(x) \bumpeq \Psi(x')$. The implicit normal subgroups are always finite and independent of the specific arguments of the ``functions'' appearing in the equation. Their identity is not important so much as their existence, which can be easily seen on a case by case basis.
\end{notation}

Let $G$ be a one-dimensional, connected algebraic group over~$F$. Recall that its $F$-definable endomorphisms form a ring $\End_F(G)$ which satisfies the left Ore condition: for every $\phi,\psi \in \End_F(G)$ there exist $\alpha,\beta \in \End_F(G)$ such that $\alpha\circ \phi = \beta\circ \psi$. Further, if $\phi$ is nonzero then $\beta$ is nonzero, and if $\psi$ is nonzero then $\alpha$ is nonzero. This property guarantees that $\End_F(G)$ embeds into a canonical skew field of fractions $L_F(G)$ whose elements are the usual equivalence classes of left-fractions $\psi^{-1}\phi$ for some nonzero~$\psi$; see e.g.~\cite[p.~119]{Jacobson_BA1} (although it uses the right-fraction convention) or \cite{Herstein_noncommutative_rings}.
The next result shows that the group of quasi-automorphisms is indeed compatible with the multiplicative group of the skew field in dimension~one.

\begin{proposition} \label{prop: quasi-aut as skew field}
Let $G$ be a one-dimensional, connected algebraic group, written additively. There is an isomorphism $L_F(G)^\times \isoto \QAut(G)$ which associates to a fraction $\psi^{-1}\phi \in L_F(G)^\times$ the equivalence class of the quasi-automorphism $\Phi = \Set{ (x,y) \in G \times G \mid \phi(x) = \psi(y) }$.
\end{proposition}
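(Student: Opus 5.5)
The plan is to define the map on fractions and verify in turn that it is well defined, a homomorphism, injective, and surjective. Write $\Phi_{\psi,\phi} = \Set{(x,y) \in G\times G \mid \phi(x) = \psi(y)}$ for nonzero $\phi,\psi \in \End_F(G)$. This is a definable subgroup because $G$ is commutative. Since nonzero endomorphisms are surjective with finite kernel, both projections are surjective. The kernel of $\pi_1\restriction_{\Phi}$ is $\{0\}\times\ker\psi$, and the kernel of $\pi_2\restriction_{\Phi}$ is $\ker\phi\times\{0\}$; both are finite. So $\Phi_{\psi,\phi}$ is a quasi-automorphism. Note that $\Phi$ has dimension one.

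\textbf{Well-definedness.} Two fractions $\psi^{-1}\phi$ and $\psi'^{-1}\phi'$ are equal exactly when there are nonzero $\alpha,\beta$ with $\alpha\psi = \beta\psi'$ and $\alpha\phi = \beta\phi'$. I will show that $\Phi_{\psi,\phi} \subseteq \Phi_{\alpha\psi,\alpha\phi}$, and that the latter group is also one-dimensional. Hence the two groups have the same connected component. By the remark after the definition of $\bumpeq$, this gives $\Phi_{\psi,\phi}\bumpeq\Phi_{\alpha\psi,\alpha\phi}=\Phi_{\beta\psi',\beta\phi'}\bumpeq\Phi_{\psi',\phi'}$.

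\textbf{Homomorphism.} Composing relations, $\Phi_{\psi_2,\phi_2}\circ\Phi_{\psi_1,\phi_1}$ consists of the pairs $(x,z)$ such that some $y$ satisfies $\phi_1 x=\psi_1 y$ and $\phi_2 y = \psi_2 z$. Using the Ore condition, choose $\alpha\phi_2=\beta\psi_1$ with $\alpha,\beta$ nonzero. Then $\psi_2^{-1}\phi_2\psi_1^{-1}\phi_1 = (\alpha\psi_2)^{-1}\beta\phi_1$. Every pair in the composite satisfies $\beta\phi_1 x = \beta\psi_1 y = \alpha\phi_2 y = \alpha\psi_2 z$, so the composite is contained in $\Phi_{\alpha\psi_2,\beta\phi_1}$. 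Both groups are one-dimensional, so their connected components agree and the two are equivalent. The identity $1=1^{-1}1$ maps to the diagonal, so the map is a group homomorphism $L_F(G)^\times\to\QAut(G)$.

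\textbf{Injectivity.} Suppose $\Phi_{\psi,\phi}\bumpeq\Delta$. Then the two groups share a connected component, so $\Phi^0=\Delta$ (the diagonal is connected because $G$ is). Hence $\phi(x)=\psi(x)$ for all $x$, so $\phi=\psi$ and the fraction equals $1$.

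\textbf{Surjectivity, the main step.} Let $\Phi$ be any quasi-automorphism. By \Cref{rem:quasi_iso_as_map}, $\Phi$ induces an isomorphism $G/K\to G/K'$, where $K = \pi_1(N')$ and $K' = \pi_2(N)$ are finite. Multiplication by $n=\card{K'}$ is a nonzero endomorphism killing $K'$. So $\psi_0 = [n]$ induces a definable homomorphism $G/K'\to G$, and composing gives a definable endomorphism $\phi \colon x\mapsto n\cdot y$ for any $y$ with $(x,y)\in\Phi$. This is well defined because $y$ is determined modulo $K'$. The map $\phi$ is definable and nonzero, since $\Phi$ surjects onto $G$ and $[n]$ is surjective. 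Now $\Phi\subseteq\Phi_{[n],\phi}$, and both groups are one-dimensional, so $\Phi\bumpeq\Phi_{[n],\phi}$. In characteristic $p$, $[n]$ may fail to be an isogeny in the algebraic sense (for instance on $\Ga$, or for supersingular curves). It is nonetheless a nonzero definable endomorphism whenever it is nonzero. On $\Ga$, where $[p]=0$, I will instead use a nonzero additive polynomial vanishing on $K'$, such as $\prod_{k\in K'}(t-k)$, which is additive because $K'$ is an $\FF_p$-space. Choosing a nonzero endomorphism that kills the finite subgroup $K'$ is the one place needing care. On $\Gm$ and on elliptic curves, $[n]$ with $n=\card{K'}$ works, since it is nonzero: $\ker[n]$ is finite.
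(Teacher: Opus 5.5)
Your proposal is correct and follows essentially the same route as the paper: the same four steps (well-definedness, homomorphism via the left Ore condition, injectivity, surjectivity), with surjectivity resting on the same key point, namely a nonzero endomorphism killing a prescribed finite subgroup ($[n]$ on $\Gm$ and elliptic curves, a $p$-polynomial on $\Ga$). The differences are cosmetic. You use the criterion $\Phi\bumpeq\Psi\iff\Phi^0=\Psi^0$ instead of exhibiting explicit finite normal subgroups, you factor the induced map $G\to G/K'$ directly rather than passing through the quotient $(G\times G)/\Phi$, and on $\Ga$ you use the subspace polynomial $\prod_{k\in K'}(t-k)$ instead of the paper's iterated product of factors $x-a_i$ (as an aside that does not affect your argument, $[p]$ on a supersingular elliptic curve is still an isogeny, merely purely inseparable; only on $\Ga$ does it vanish).
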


\begin{proof}
Define $\Phi(\psi, \phi) = \Set{ (x,y) \in G \times G \mid \phi(x) = \psi(y) }$ for nonzero $\psi, \phi \in \End_F(G)$ (note the ordering of $\phi,\psi$ in the definition of $\Phi(\psi,\phi)$). Each such set is clearly an $F$-definable subgroup of~$G \times G$. Since $G$ is one-dimensional and connected, a nonzero endomorphism has one-dimensional image and is therefore surjective with finite kernel. This implies that $\Phi(\psi, \phi)$ is a quasi-automorphism. We must first show that this map induces a well-defined map between equivalence classes.

\begin{claim}
If $\psi_1^{-1}\phi_1 = \psi_2^{-1}\phi_2$ in $L_F(G)^\times$ then $\Phi(\psi_1, \phi_1) \bumpeq \Phi(\psi_2, \phi_2)$.
\end{claim}

\begin{proof}
By the Ore condition there exist $\alpha, \beta \in \End_F(G)$, both nonzero, such that $\alpha \psi_1 = \beta \psi_2$. Now from the equation $(\alpha \psi_1)^{-1} (\alpha \phi_1) = (\beta \psi_2)^{-1}(\beta \phi_2)$ in $L_F(G)$, on multiplying from the left by $\alpha \psi_1 = \beta \psi_2$, we find $\alpha \phi_1 = \beta \phi_2$ (in $L_F(G)$ and hence in~$\End_F(G)$ as it embeds in~$L_F(G)$). Observe that, since $\alpha \neq 0$,
\[
  \leftindex_\alpha \Phi(\psi_1, \phi_1) \coloneqq \Phi(\alpha \psi_1, \alpha \phi_1) \bumpeq \Phi(\psi_1, \phi_1).
\]
Indeed, the inclusion $\supseteq$ holds, and conversely $\leftindex_\alpha \Phi(\psi_1, \phi_1) \subseteq \Phi(\psi_1, \phi_1) \cdot (\phi_1^{-1}(\ker \alpha) \times \psi_1^{-1}(\ker \alpha))$. Thus, $\Phi(\psi_1, \phi_1)$ and $\leftindex_\alpha \Phi(\psi_1, \phi_1)$ coincide up to a finite normal subgroup. Similarly, $\leftindex_\beta \Phi(\psi_2, \phi_2) \bumpeq \Phi(\psi_2, \phi_2)$. But then
\begin{align*}
  \Phi(\psi_2, \phi_2) \bumpeq \leftindex_\beta \Phi(\psi_2, \phi_2) &= \Set{ (x,y) \in G \times G \mid \beta \phi_2(x) = \beta \psi_2(y) } \\
  &= \Set{ (x,y) \in G \times G \mid \alpha \phi_1(x) = \alpha \psi_1(y) } = \leftindex_\alpha \Phi(\psi_1, \phi_1) \bumpeq \Phi(\psi_1, \phi_1).
\end{align*}
Since $\bumpeq$ is an equivalence relation, it follows that the map $\psi^{-1}\phi \mapsto [\Phi(\psi, \phi)]_\bumpeq$ is well-defined.
\end{proof}

Next we focus on the homomorphism property. Let $\psi_1^{-1}\phi_1, \psi_2^{-1}\phi_2 \in L_F(G)^\times$ be given. Using the Ore condition, choose nonzero $\alpha, \beta \in \End_F(G)$ such that $\alpha \phi_1 = \beta \psi_2$. This gives a way to express the product $(\psi_1^{-1} \phi_1) (\psi_2^{-1} \phi_2) = (\alpha \psi_1)^{-1} (\beta \phi_2)$. It suffices to prove the following.

\begin{claim}
The equivalence $\Phi(\alpha \psi_1, \beta \phi_2) \bumpeq \Phi(\psi_1, \phi_1) \circ \Phi(\psi_2, \phi_2)$ holds.
\end{claim}

\begin{proof}
Using the group structure of $\QAut(G)$ and the fact $\leftindex_\alpha \Phi(\mathord-) \bumpeq \Phi(\mathord-)$ derived earlier, it~follows that
\begin{align*}
  &\hphantom{{}={}} \Phi(\psi_1, \phi_1) \circ \Phi(\psi_2, \phi_2) \bumpeq \leftindex_\alpha \Phi(\psi_1, \phi_1) \circ \leftindex_\beta \Phi(\psi_2, \phi_2) \\
  &= \{(x,z) \in G \times G \mid \exists y\in G: (x,y) \in \leftindex_\beta \Phi(\psi_2,\phi_2) \land (y,z) \in \leftindex_\alpha \Phi(\psi_1, \phi_1)\} \\
  &= \{(x,z) \in G \times G \mid \exists y \in G: \beta\phi_2(x) = \beta\psi_2(y) \land \alpha\phi_1(y) = \alpha \psi_1(z)\} \\
  &= \Set{ (x,z) \in G \times G \mid \beta\phi_2(x) = \alpha\psi_1(z) } = \Phi(\alpha\psi_1, \beta\phi_2),
\end{align*}
where to get to the last line from the previous one we have used the fact that $\beta\psi_2(y)=\alpha\phi_1(y)$ for all $y \in G$.
\end{proof}

It remains to show that the map $L_F(G)^\times \to \QAut(G)$ is bijective. Given the homomorphism property, injectivity reduces to the following claim.

\begin{claim}
If $\Phi(\psi, \phi) \bumpeq \Delta$ then $\psi = \phi$.
\end{claim}

\begin{proof}
For an endomorphism $\phi \in \End_F(G)$ the quasi-automorphism $\Phi(1, \phi)$ is just its graph and hence a closed irreducible subgroup of~$G \times G$. Thus if $\psi, \phi \in \End_F(G)$ are distinct, their graphs $\Psi = \Phi(1, \psi)$ and $\Phi = \Phi(1, \phi)$ are also distinct.
If $N \le G\times G$ is a finite subgroup, $N=\{n_1,\ldots,n_t\}$, then $\Psi N = \bigcup_{n \in N} \Psi n$ is a finite union of cosets. If $\Psi N = \Phi N$, we have $\Phi \subseteq \Psi n$ for some $n \in N$, as $\Phi \subseteq \Phi N = \Psi N$ and $\Phi$ is irreducible; but then $(e_G, e_G) \in \Psi n$, so $n \in \Psi$, and $\Phi \subseteq \Psi$. In the same way we see that $\Psi \subseteq \Phi$. Hence, $\Phi(1, \psi) \bumpeq \Phi(1, \phi)$ implies $\Phi(1, \psi) = \Phi(1, \phi)$. Thus, if $\psi \neq \phi$ then $\Phi(\psi, \phi) = \Phi(1, \psi)^{-1} \circ \Phi(1, \phi) \centernot\bumpeq \Delta$ as required.
\end{proof}

Finally we have to prove surjectivity. Let $\Phi \le G \times G$ be any quasi-automorphism. Since $G$ is commutative, so is $G \times G$, and every subgroup in it is normal. Let $H = (G \times G) / \Phi$ be the quotient group (with addition as the operation) and $\pi\colon G\times G \to H$ the associated quotient map. The group $H$ has dimension $\dim(G \times G) - \dim \Phi = 1$ and is connected as a continuous image of a connected group. Consider the morphism $f_1\colon G \to H$ given by $f_1(x) = \pi(x,0)$. It is dominant because otherwise it would be trivial and $\Phi = \ker \pi \supseteq G \times \{e_G\}$ which contradicts the quasi-automorphism properties. Thus, $f_1$ is an isogeny $G \to H$. Analogously, $f_2\colon G \to H$, $f_2(x) = -\pi(0,x)$ is an isogeny.
We have $\pi(x,y) = f_1(x) - f_2(y)$ and thus
\[
  \Phi = \ker \pi = \Set{ (x,y) \in G \times G \mid f_1(x) = f_2(y) }
\]
but $f_1, f_2$ are only isogenies $G \to H$ and not endogenies of~$G$.

\begin{claim}
There is an $F$-definable isogeny $\alpha\colon H \to G$.
\end{claim}

\begin{proof}
Fix any $F$-definable isogeny $G \to H$, say $f_1$, and let $N = \ker f_1 \triangleleft G$ be its finite kernel. We use the fact that there exists a nonzero $\beta \in \End_F(G)$ with $\ker \beta \supseteq N$. This relies on details about the connected, one-dimensional algebraic groups over~$F$:
\begin{itemize}[itemsep=0.4em]
\item If $G \simeq \Gm$ or an elliptic curve, then $\End_F(G)$ has characteristic zero. It suffices to pick the nonzero endomorphism $\beta = \card{N} \in \ZZ \subseteq \End_F(G)$. Since every element of $N$ has order dividing $\card{N}$, the kernel surely contains $N$. %
\item For $G \simeq \Ga$ recall that $\End_F(G) \supseteq \End_F^\mathrm{alg}(G) \simeq F[x; \Frob]$. We construct $\beta$ iteratively, starting with $N_1 = N$. As long as there exists $0 \neq n \in N_i$, pick some element $\beta_i = x - a_i \in \End_F(G)$ which annihilates~$n$. This is possible because nonzero $a \in F$ are units in $\End_F(G)$ and therefore bijective on~$G$. Thus, $N_{i+1} = \beta_i(N_i)$ has strictly lower cardinality than~$N_i$. Eventually, $N_{d+1} = \{0\}$ and $\beta = \beta_d \cdots \beta_1$ is the required endogeny.
\end{itemize}
Thus we obtain a quotient map $q\colon H \simeq G/N \to G/\ker\beta$ and an induced map $\overline{\beta}$ from the first isomorphism theorem for algebraic groups:
\begin{center}
\begin{tikzcd}
G \arrow[rr, "\beta"] \arrow[d]               &  & G \\
H \arrow[d, "q"']                             &  &   \\
G/\ker\beta \arrow[rruu, "\overline{\beta}"'] &  &  
\end{tikzcd}
\end{center}
Everything is $F$-definable and $\alpha = \overline{\beta} \circ q$ is the desired~isogeny.
\end{proof}

Now form the endogenies $\phi = \alpha \circ f_1$ and $\psi = \alpha \circ f_2$ of $G$ and observe that
\begin{align*}
  \Phi &= \Set{ (x,y) \in G \times G \mid f_1(x) = f_2(y) } \\
  &\bumpeq \Set{ (x,y) \in G \times G \mid (\alpha \circ f_1)(x) = (\alpha \circ f_2)(y) } = \Phi(\psi, \phi).
\end{align*}
This shows surjectivity and completes the proof.
\end{proof}

\subsection{The quasi-automorphism labeling} %
\label[section]{sec: quasi-aut labels}
Consider an $F$-definable group $G$ over $\mathbb{K} \supseteq F$. If $G$ is one-dimensional, any generic pair of elements $x,y \in G$ lies in an Evans--Hrushovski projective line coordinatized by $L_F(G)$. We can coordinatize $x$ by $[1:0]$ and $y$ by $[0:1]$, so that every other point $z$ in this projective line has some coordinates $[-\varphi : \psi]=[-\psi^{-1}\varphi : 1]$, where $\psi^{-1}\varphi \in L_F(G)$. Our goal is to produce similar coordinates when $G$ is of arbitrary dimension. More specifically, given a group configuration with an extra point, we show how to add points and lines in a way that implies this point can be labelled by a quasi-automorphism $\Phi \in \QAut(G)$, where $\Phi$ satisfies $z \bumpeq \Phi(x)^{-1}y$. 
\begin{remark}
\label[remark]{rem: quasi-aut coordinates}
    In the special case where $G$ is one-dimensional and 
\[\Phi = \{(x,y) \in G \times G \mid \varphi(x) = \psi(y)\},\]
we obtain $z \bumpeq (\psi^{-1} \varphi)(x)^{-1} y$, i.e. $\psi(z) = \varphi(x)^{-1}\psi(y)$, and hence $z$ is interalgebraic with $\varphi(x)^{-1}\psi(y)$. So $z$ has homogeneous coordinates $[-\varphi:\psi]$.
The minus sign in $[-\varphi:\psi]$ seems unavoidable: we can't compensate for it by ``negating'', because the inversion map is not an endomorphism in a noncommutative group.
\end{remark}

\begin{definition}
Let $F\subset\mathbb{K}$ be fields. Two group triples $(x,y,z)$,
$(x',y',z')$ in $\mathbb{K}$ over $F$ are \emph{interdefinable} if they
are elementwise interdefinable, meaning
\[
  \dcl(Fx) = \dcl(Fx'), \quad
  \dcl(Fy) = \dcl(Fy'), \quad \text{and }
  \dcl(Fz) = \dcl(Fz').
\]
\emph{Interalgebraicity} of group triples is also understood elementwise.
\end{definition}

\begin{theorem} \label{thm: quasi-iso}
Let $(x,y,z)$ and $(x',y',z')$ be two group triples in $\mathbb{K}$, with groups $G$ and $G'$ respectively. Suppose that $x' \in \acl_F(x)$, $y' \in \acl_F(y)$ and $z' \in \acl_F(z)$. Then there are elements $a_0', b_0' \in G'$, definable over $F$, and a definable quasi-epimorphism $\Phi \le G \times G'$ such that $x \mathrel{\Phi} a_0' x'$, $y \mathrel{\Phi} y' b_0'$, and $z \mathrel{\Phi} a_0' z' b_0'$. In particular, the translated group triple $(a_0' x', y' b_0', a_0' z' b_0')$ is interdefinable with $(x', y', z')$.
Moreover, if $(x, y, z)$ and $(x', y', z')$ are interalgebraic then $\Phi$ is a quasi-isomorphism.
\end{theorem}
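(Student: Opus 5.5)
We take the standard route from the proof that a group of a group configuration is unique up to isogeny. The plan is to build $\Phi$ as a stabilizer-type subgroup of $G\times G'$, read off from the type of a pair $(g,g')$ with $g\in G$ generic and $g'\in G'$ algebraic over it. Everything is over $F$; since $\ACF$ has elimination of imaginaries we may work with $\acl$ and definable sets.

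\emph{Step 1: a germ.} Since $x'\in\acl(x)$, choose an $F$-definable finite-to-one correspondence $X\subseteq G\times G'$, irreducible (taking the locus of $(x,x')$), with $(x,x')$ generic in $X$. Do the same for $Y$ through $(y,y')$ and $Z$ through $(z,z')$. The relation $z=xy$, $z'=x'y'$ with $x,y$ independent says that the generic point of $Z$ is the "product" of independent generic points of $X$ and $Y$.

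\emph{Step 2: the subgroup.} Define
\[
  \Phi=\{(g,g')\in G\times G' \mid \dim\bigl((g,g')X\cap X\bigr)=\dim X\}
\]
and its symmetric variant, i.e. the left stabilizer of the generic type of $X$ inside $G\times G'$. By definability of Morley rank this is a definable subgroup. To show it is large, use the usual argument. Take $(u,u')$ a generic of $X$ independent from $(x,x'),(y,y')$. The element $(u^{-1}x,\,u'^{-1}x')$ lies in the "difference set" $X^{-1}X$, and the type of $X$ is translated into itself by it. Projecting to $G$ gives an element of $G$ generic over $F$, because $u^{-1}x$ is generic. So $\pi_G(\Phi)$ contains a generic of $G$ and is a definable subgroup; hence it is all of $G^0=G$ (connectedness). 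Since $X$ is finite-to-one over $G$, the kernel $\Phi\cap(\{e\}\times G')$ is finite. Surjectivity onto $G'$ follows symmetrically, because $u'^{-1}x'$ is generic in $G'$. Here we use that $x'$ is generic in $G'$, which holds since $(x',y',z')$ is a group triple.

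\emph{Step 3: the translations $a_0',b_0'$.} Having $\Phi$, the coset $\Phi\cdot(x,x')^{-1}$ versus $X$ gives the following. For generic $x$, $\Phi(x)$ and $x'$ differ by a left translation by some $a_0'$. This $a_0'$ is a priori in $\acl(x)$. Using the $Y$-correspondence and the relation $z=xy$, independence of $x,y$ forces it to be $F$-definable (or $F$-algebraic). Then we adjust it using that $\Phi$ is defined only up to finite indeterminacy, and possibly pass to a finite-index equivalent $\Phi$. Similarly we obtain $b_0'$ on the right from $Y$, and $z\mathrel{\Phi}a_0'z'b_0'$ follows from multiplicativity of $\Phi$.

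This step, getting $a_0',b_0'$ over $F$ rather than over $\acl(x)$ or $\acl(y)$, is the main obstacle. Our first attempt is the following. $a_0'$ is determined both by $x$ and, via $z=xy$ and $\Phi$ being a group, by $(z,y)$-data. So $a_0'\in\acl(x)\cap\acl(z,\ldots)$, and $x\indep z$ then yields $a_0'\in\acl(F)=F$.

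\emph{Step 4: the moreover part.} In the interalgebraic case, run the same construction with the roles of $G$ and $G'$ swapped. Alternatively, note directly that $X$ is now finite-to-one on both sides, so the kernel of $\Phi\to G$ is also finite, and $\Phi$ is a quasi-isomorphism. Interdefinability of the translated triple is immediate because $a_0',b_0'$ are $F$-definable.
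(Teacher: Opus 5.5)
There is a genuine gap in Step~2, and it sits at the core of the theorem. Your argument that the stabilizer $\Phi$ is large never uses $Y$, $Z$, or the relation $(z,z')=(x,x')(y,y')$ in $G\times G'$. It only uses that $X$ is the locus of $(x,x')$ with $x,x'$ generic and $x'\in\acl_F(x)$, and that cannot suffice. Take $G=G'=\Ga$ in characteristic $\neq 2$ and $x'=x^2$. Then $X$ is a parabola, and $(g,g')+X$ meets $X$ in at most one point unless $(g,g')=(0,0)$. So the stabilizer is trivial, even though $x-u$ is generic for independent generics $u,x$.

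The faulty step is the claim that $h=(u^{-1}x,\,u'^{-1}x')$ translates $p=\tp(x,x'/F)$ into itself. Here $h$ is built from two realizations of $p$ that are independent over $F$, so it is in general not independent from $(u,u')$; in the example $\RM(h/F)=2$. The identity $(u,u')\,h=(x,x')$ therefore says nothing about $h$ stabilizing $p$.

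What is missing is the group triple itself: $(x,x')$, $(y,y')$, $(z,z')$ are pairwise independent in $G\times G'$ and satisfy the product relation. For example:
\begin{itemize}
\item take $(y_1,y_1')$ realizing $\tp(y,y'/F(z,z'))$ and independent from $(y,y')$ over $F(z,z')$, and set $h=(y,y')(y_1,y_1')^{-1}$;
\item a rank count gives $\RM(h/F)=\dim G$, with $h$ independent from $(x,x')$ over $F$;
\item and $(x,x')\,h=(z,z')(y_1,y_1')^{-1}$ realizes $p$.
\end{itemize}
Hence the \emph{right} stabilizer of $p$ has dimension $\dim G$, and $p$ is generic in a left coset of it. This is the side matching the normal form $x\mathrel{\Phi}a_0'x'$, $y\mathrel{\Phi}y'b_0'$; the left stabilizer is only a conjugate of it.

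The paper avoids stabilizers entirely. It takes Zariski closures of the loci of $(x,x')$, $(y,y')$, $(z,z')$ as closed $F$-definable relations. It uses \Cref{lemma: types} to show that products of independent generic points of the first two land in the third. It extends this ``two-of-three'' property to all points by continuity. Finally it translates the relations so each contains $(e_G,e_{G'})$, which forces them to coincide and to be a subgroup.

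Step~3 also does not show that $a_0'$ is $F$-definable. If the ``$(z,y)$-data'' includes $y$, then $x=zy^{-1}$, so $\acl(x)\cap\acl(y,z)=\acl(x)$. The independence of $x$ and $z$ then yields nothing; you would need $a_0'\in\acl(z)$ alone, which you have not shown.

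The correct reason is different. Since $p$ is generic in the left coset $C=(x,x')\Phi$, the canonical parameter of $C$ is definable over $(x,x')$ and independent from it over $F$. Hence it is algebraic over $F$, and $C$ is $F$-definable, because $F$ is algebraically closed and $\ACF$ eliminates imaginaries. As $\pi_G(\Phi)=G$, the set $C\cap(\{e_G\}\times G')$ is nonempty, finite and $F$-definable, so its points lie in $F$. For any such point $(e_G,k)$ one can take $a_0'=k^{-1}$.

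In the paper this comes for free: $a_0'$ and $b_0'$ are taken from the fibres over $e_G$ of the $F$-definable closed relations. In Step~4 you need that $\Phi\to G'$ (not $\Phi\to G$) has finite kernel; that follows from $\dim\Phi=\dim G=\dim G'$ and is fine.
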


\begin{lemma}\label{lemma: types}
Let $a,b, x, y$ be tuples in $\mathbb{K}$ such that $\RM(ab/F) = \RM(a/F)+\RM(b/F)$ and similarly $\RM(xy/F) = \RM(x/F)+\RM(y/F)$. Assume $\tp(a/F)=\tp(x/F)$ and $\tp(b/F)=\tp(y/F)$. Then $\tp(ab/F)=\tp(xy/F)$.
\end{lemma}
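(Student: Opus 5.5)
The plan is to reduce the statement to the stationarity of types over algebraically closed fields. We may assume $F$ is algebraically closed, as in the standing conventions; then $F$ is a model of $\ACF_p$. By quantifier elimination in $\ACF_p$, the type $\tp(ab/F)$ is determined by the $F$-isomorphism type of the field $F(a,b)$ together with the generating tuple. So it suffices to produce an $F$-isomorphism $F(a,b)\isoto F(x,y)$ sending $a\mapsto x$ and $b\mapsto y$.

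First, the rank hypothesis $\RM(ab/F)=\RM(a/F)+\RM(b/F)$ says exactly that $a\indep_F b$, i.e.\ $\trdeg_{F(a)}F(a,b)=\trdeg_F F(b)$; the same holds for $x,y$. Second, I would use the equality of types to choose $F$-isomorphisms $\sigma\colon F(a)\isoto F(x)$ with $a\mapsto x$ and $\tau\colon F(b)\isoto F(y)$ with $b\mapsto y$; both exist by quantifier elimination. Third, I would form the tensor product $\sigma\otimes\tau\colon F(a)\otimes_F F(b)\isoto F(x)\otimes_F F(y)$ and consider the natural multiplication maps $F(a)\otimes_F F(b)\to F(a,b)$ and $F(x)\otimes_F F(y)\to F(x,y)$.

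The key claim is that each multiplication map is injective, with image a domain whose fraction field is the compositum. This is where algebraic closedness of $F$ enters. Since $F$ is algebraically closed, $F(a)/F$ is a regular extension. A regular extension is linearly disjoint over $F$ from any extension that is algebraically independent from it (Lang, \emph{Introduction to Algebraic Geometry}, or the section on regular extensions in \cite{LangAlgebra}). Hence $F(a)$ and $F(b)$ are linearly disjoint over $F$, so $F(a)\otimes_F F(b)$ embeds into $F(a,b)$ as a domain, with fraction field $F(a,b)$; likewise for $x,y$. The isomorphism $\sigma\otimes\tau$ then extends to the fraction fields and gives the desired $F$-isomorphism $F(a,b)\isoto F(x,y)$ with $a\mapsto x$ and $b\mapsto y$. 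Quantifier elimination then yields $\tp(ab/F)=\tp(xy/F)$.

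As a cross-check, I would also give the model-theoretic version. Apply an automorphism of $\mathbb{K}$ over $F$ carrying $a$ to $x$; this assumes $\mathbb{K}$ is sufficiently homogeneous, and otherwise we pass to an elementary extension. Let $b^*$ be the image of $b$. Then $b^*\indep_F x$ and $\tp(b^*/F)=\tp(y/F)$. Since types over the model $F$ are stationary, the nonforking extension of $\tp(y/F)$ to $Fx$ is unique, so $\tp(b^*/Fx)=\tp(y/Fx)$. This gives $\tp(ab/F)=\tp(xb^*/F)=\tp(xy/F)$.

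The main obstacle is the linear disjointness step, or equivalently stationarity. It genuinely needs $F$ algebraically closed: otherwise the tensor product can fail to be a domain and the conclusion is false. So I would be careful to invoke the convention that $F$ is algebraically closed at exactly this point.
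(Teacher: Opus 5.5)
Your proof is correct. Your ``cross-check'' is essentially the paper's own argument. The paper moves $b$ to $y$ by some $\sigma\in\Aut(\mathbb{K}/F)$ and then uses stationarity of $\tp(x/F)$ (a type over the model $F$) to identify $\sigma(\tp(a/Fb))$ with $\tp(x/Fy)$. You do the same with the roles of the two coordinates exchanged.

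Your homogeneity caveat is unnecessary. For finite tuples, an $F$-isomorphism $F(a)\isoto F(x)$ always extends to an automorphism of the algebraically closed field $\mathbb{K}$, because $\mathbb{K}$ has the same transcendence degree over $\overline{F(a)}$ as over $\overline{F(x)}$.

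Your primary route is genuinely different. Instead of forking and stationarity, you show directly that $F(a,b)$ and $F(x,y)$ are $F$-isomorphic via $a\mapsto x$, $b\mapsto y$. Every extension of an algebraically closed field is regular, and a regular extension is linearly disjoint from any extension free from it (Lang). So $F(a,b)$ and $F(x,y)$ are the fraction fields of the isomorphic domains $F(a)\otimes_F F(b)\cong F(x)\otimes_F F(y)$, and quantifier elimination finishes.

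This is exactly the field-theoretic content of stationarity in $\ACF$. It is self-contained, needs no automorphisms of $\mathbb{K}$, and pinpoints where algebraic closedness of $F$enters, namely regularity; your $\sqrt{2}$-type worry about non-closed $F$ is borne out. The paper's version is a few lines of general stability theory that would go through verbatim in any stable theory over a model, at the cost of treating stationarity as a black box.
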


\begin{proof}
The assumptions provide $\sigma \in \Aut(\mathbb{K}/F)$ with $\sigma(b) = y$. Moreover, $x \indep_F y$ and $a \indep_F b$ hold. Since $\ACF_p$ is stable, types over models are stationary. Hence $\tp(x/F)$ has a unique non-forking extension to $F(y)$ and by the independence this is just $\tp(x/Fy)$; the same holds for~$a$ and~$b$. The type $\sigma(\tp(a/Fb))$ is a non-forking extension of $\tp(a/F) = \tp(x/F)$ over $\sigma(F(b)) = F(y)$. Stationarity forces it to coincide with $\tp(x/Fy)$. This implies $\tp(xy/F) = \tp(ab/F)$.
\end{proof}

\begin{proof}[Proof of \Cref{thm: quasi-iso}]
By \Cref{rem: algebraic group} we may assume that $G$ and $G'$ are algebraic groups over~$F$ and endow them with the Zariski topology.
Fix $F$-definable charts $U$ on $G$ and $U'$ on $G'$ that contain $x$ and $x'$ respectively. Let $I \triangleleft F[U]\otimes_F F[U']$ be the ideal of polynomials that vanish on $x,x'$. Since $x' \in \acl_F(x)$ it is prime and nonzero; and its zero set $Z=Z(I)$ has $\dim(Z(I)) = \RM(Z(I)) = \RM(x,x') =\RM(x)$.
The zero set $Z \subsetneq U \times U'$ is a locally closed subset of $G \times G'$, whose closure $C \subset G \times G'$ satisfies $C \cap (U\times U') = Z$. Define a relation $a \Rel1 a' \defiff (a,a') \in C$; %
analogously $y,y'$ and $z,z'$ define relations $\Rel2$ and $\Rel3$, respectively. Each of these relations is a Zariski-closed subset of $G \times G'$. Consider the closed subset $S = \mathord{\Rel1} \times \mathord{\Rel2}$ of the product $(G \times G')^2$. If $(a, a', b, b') \in S$ is generic then $a b \Rel3 a' b'$ by \Cref{lemma: types}. Since $\mathord{\Rel3}$ is closed and the group operations are continuous, this property holds for the entire set~$S$. Analogously there are continuous maps $\mathord{\Rel1} \times \mathord{\Rel3} \to \mathord{\Rel2}$ and $\mathord{\Rel2} \times \mathord{\Rel3} \to \mathord{\Rel1}$. They yield the following implications for arbitrary tuples from $(G \times G')^2$:
\begin{align}
  \label{eq: I+II=>III}
  a \Rel1 a' \land b \Rel2 b' &\implies a b \Rel3 a' b', \\
  \label{eq: I+III=>II}
  a \Rel1 a' \land c \Rel3 c' &\implies a^{-1} c \Rel2 {a'}^{-1} c', \\
  \label{eq: II+III=>I}
  b \Rel2 b' \land c \Rel3 c' &\implies c b^{-1} \Rel1 c' {b'}^{-1}.
\end{align}
The implications \eqref{eq: I+II=>III}--\eqref{eq: II+III=>I} are referred to as the ``two-of-three property'' for the relations $\oRel1, \oRel2, \oRel3$ for the remainder of this proof.

For any tuple $(e_G, a_0', e_G, b_0') \in S$ define modified relations
\begin{align*}
  a \Rel1' a' &\defiff a \Rel1 a_0' a', \\
  b \Rel2' b' &\defiff b \Rel2 b' b_0', \\
  c \Rel3' c' &\defiff c \Rel3 a_0' c' b_0'.
\end{align*}
These new relations satisfy $e_G \Rel1' e_{G'}$, $e_G \Rel2' e_{G'}$ and $e_G \Rel3' e_{G'}$. It is easy to check that they also satisfy the two-of-three property. This forces all three relations to coincide. Indeed, if $a \Rel1' a'$ then $e_G \Rel2' e_G{'}$ and \eqref{eq: I+II=>III} yield $a = a e_G \Rel3' a' e_{G'} = a'$, so $\mathord{\Rel1'} \subseteq \mathord{\Rel3'}$. By symmetric arguments we get $\mathord{\Rel1'} = \mathord{\Rel2'} = \mathord{\Rel3'}$.

The set $\Phi = \mathord{\Rel1'}$ contains the identity $(e_G, e_{G'})$ of $G \times G'$ and is closed under multiplication and inverses, using the two-of-three property together with the fact that $\mathord{\Rel1'} = \mathord{\Rel2'} = \mathord{\Rel3'}$. Hence, it is a subgroup.
The~components of the element $(x, {a_0'}^{-1} x') \in \Phi$ are generic in $G$ and $G'$, respectively. Therefore, the projection maps $\pi\colon \Phi \to G$ and $\pi'\colon \Phi \to G'$ are dominant and indeed surjective since the image of a homomorphism of algebraic groups is closed. Since $\dim \Phi = \dim G$ it follows that $\pi$ has finite kernel which completes the proof that $\Phi$ is a quasi-epimorphism $G \to G'$. If~the group triples are even interalgebraic then $\dim \Phi = \dim G = \dim G'$ and it follows that $\Phi$ is a quasi-isomorphism.

Since $e_G$ and the relations $\Rel1$, $\Rel2$ are definable over $F$, the group elements $a_0', b_0'$ are algebraic over $F$; but $F$ is algebraically closed, so $a_0', b_0'$ are definable over~$F$. We have that $x \mathrel{\Phi} {a_0'}^{-1} x'$, $y \mathrel{\Phi} y' {b_0'}^{-1}$ and $z \mathrel{\Phi} {a_0'}^{-1} z' {b_0'}^{-1}$. This yields the statement of the theorem (replacing $a_0'$ and $b_0'$ by their inverses).
\end{proof}

\begin{corollary}
    \label{cor: quasi-iso config}
    Let $(a',b',c',x',y',z')$ be a group configuration for an $F$-definable group $G'$, and assume $(y',b',z')$ is elementwise interalgebraic with a group triple $(y,b,z)$ for an $F$-definable group $G$. Then there exist $a,c,x \in G$, elementwise interalgebraic with the corresponding primed elements, such that $(a,b,c,x,y,z)$ is a group configuration for $G$.
\end{corollary}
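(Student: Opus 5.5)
Apply \Cref{thm: quasi-iso} to the two triples, then transport the primed configuration through the resulting quasi-isomorphism and correct the finite ambiguity by algebraicity.

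First I orient the triples. In the configuration over $G'$ we have $y'b' = z'$. So $(y',b',z')$ is a group triple for $G'$: $y',b'$ are independent generics and $z'$ is generic, since any two of $y',b',z'$ determine the third. By hypothesis $(y,b,z)$ is a group triple for $G$ with $yb=z$, interalgebraic with $(y',b',z')$ elementwise. The ``moreover'' part of \Cref{thm: quasi-iso} then gives $F$-definable $a_0,b_0\in G$ and a quasi-isomorphism $\Phi\le G'\times G$ such that
\[ y'\mathrel{\Phi} a_0y,\qquad b'\mathrel{\Phi} b b_0,\qquad z'\mathrel{\Phi} a_0 z b_0. \]
Here I apply the theorem with the primed triple as the source and $(y,b,z)$ as the target. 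Since $\Phi$ is a subgroup, using $x'=y'b'^{-1}\cdots$ is not directly available. Instead I choose images: pick $x\in G$ with $(x',x)\in\Phi$, and $a$ with $(a',a)\in\Phi$. Fibres of $\Phi$ are finite cosets of the finite kernel, so $x,a$ are interalgebraic with $x',a'$.

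Next I arrange that the relations hold exactly. Since $\Phi$ is a group and $x'a'=y'$, we get $(y', xa)\in\Phi$. Also $(y',a_0y)\in\Phi$, so $xa = n\,a_0y$ with $n$ in the finite kernel $K\times\{\cdot\}$ of the projection to $G'$. I absorb the discrepancies by redefining
\[ \tilde y = xa,\qquad \tilde b = \tilde y^{-1}z,\qquad \tilde c = z^{-1}x. \]
Then $x\tilde a=\tilde y$, $\tilde y\tilde b=z$, $z\tilde c=x$ hold by construction.

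It remains to check interalgebraicity and genericity. $\tilde y$ differs from $y$ by an $F$-algebraic element, namely $n a_0$ from a finite $F$-definable set. Hence $\tilde y\in\acl(y)$ and conversely, so $\tilde y$ is interalgebraic with $y$ and hence with $y'$. The statement wants $y,z,b$ kept literally. So I instead keep $y,b,z$ and define $x$ as the element of the fibre, then set $a=x^{-1}y$ and $c=z^{-1}x$. The point is that $a$ is interalgebraic with $a'$, because $(a', x^{-1}y)$ lies in $\Phi$ up to $F$-algebraic translations and finite kernels. Similarly $c$ and $c'$ are interalgebraic. Independence and genericity of $x,y,z$ transfer along interalgebraicity, since ranks are preserved.

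The main obstacle is this bookkeeping with the translations $a_0,b_0$. One must choose $x$ so that the translations cancel, for instance by choosing $x$ in $\Phi(x')$ and then multiplying by $a_0^{-1}$. One then needs $x^{-1}y$ to still be related to $a'$ by $\Phi$ up to $F$-definable translation and the finite kernel. This holds because $\Phi$ is a subgroup and $a_0,b_0$ are $F$-definable.
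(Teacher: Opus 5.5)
Your final construction is correct and is exactly the paper's argument, with $\Phi$ oriented $G'\to G$ instead of $G\to G'$. You choose $x$ with $(x',a_0x)\in\Phi$ and set $a=x^{-1}y$, $c=z^{-1}x$; the subgroup property then gives $(a',a)\in\Phi$ and $(c',b_0^{-1}c)\in\Phi$ exactly, and the interalgebraicities follow. Delete the intermediate $\tilde y,\tilde b,\tilde c$ detour (it leaves $\tilde a$ undefined and would alter $y$), and write out those two memberships explicitly: in a noncommutative $G$ it is precisely the $a_0^{-1}$ correction that prevents a conjugate such as $x^{-1}a_0x$ from spoiling the argument.
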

\begin{proof}
    By \Cref{thm: quasi-iso} there is an $F$-definable quasi-isomorphism $\Phi \le G\times G'$ and elements $a_0', b_0' \in G'$ definable over $F$, such that $y \mathrel\Phi a_0' y'$, $b \mathrel\Phi b' b_0'$, and $z \mathrel\Phi a_0' z' b_0'$. Since $F$ is algebraically closed, there exists $x \in G$ such that $x \mathrel\Phi a_0' x'$. Define
    \[a = x^{-1}y\quad \text{and}\quad  c = z^{-1} x.\]
    Then $(a,b,c,x,y,z)$ is a group configuration for $G$, and since $\Phi$ is a quasi-isomorphism, we have
    \[a\mathrel\Phi (a_0' x')^{-1} (a_0' y') = x'^{-1}y' = a' \quad \text{and}\quad c \mathrel \Phi (a_0' z' b_0')^{-1}(a_0' x') = b_0'^{-1} z'^{-1}x' = b_0'^{-1} c'.\]
    In particular, each unprimed element is interalgebraic with the corresponding primed one.
\end{proof}

Consider three superimposed group configurations as in \Cref{fig: triple stacked}. This situation can be encoded more succinctly in the diagram in~\Cref{fig: triple compact}. Note that $b_2 \in \acl(y_1, z_1)$; up to interalgebraicity this element appears in \Cref{fig: triple compact} as $b'$. Our goal is to relate it to the group structure of $G_1$ in a similar way to how $b_1 = y_1^{-1} z_1$.

\begin{figure}
\begin{tikzpicture}[3d view={10}{45}, every node/.style={outer sep=0pt}, elt/.style={fill, circle, thick, inner sep=3pt, outer sep=0pt}, every path/.style={thick}]]
  \node [elt, label={right:$x_1$}] (x1) at (0,2,0) {};
  \node [elt, label={left:$y_1$}] (y1) at (-3,-2,0) {};
  \node [elt, label={right:$z_1$}] (z1) at (3,-2,0) {};
  \node [elt, label={[yshift=0.5ex]left:$a_1$}] (a1) at (-1.5,0,0) {};
  \node [elt, label={below:$b_1$}] (b1) at (0,-2,0) {};
  \node [elt, label={right:$c_1$}] (c1) at (1.5,0,0) {};
  \node (G1) at (3.5,0.5,0) {$G_1$};

  \draw[ultra thick] (x1) -- (a1) -- (y1) -- (b1) -- (z1) -- (c1) -- (x1);
  \draw[ultra thick, cyan!80!black] (a1) to[out=280, in=170] (b1) to[out=0, in=240] (c1);

  \node [elt, label={right:$x_2$}] (x2) at (0,2,-5) {};
  \node [elt, label={left:$y_2$}] (y2) at (-3,-2,-5) {};
  \node [elt, label={right:$z_2$}] (z2) at (3,-2,-5) {};
  \node [elt, label={[yshift=0.5ex]left:$a_2$}] (a2) at (-1.5,0,-5) {};
  \node [elt, label={above:$b_2$}] (b2) at (0,-2,-5) {};
  \node [elt, label={right:$c_2$}]  (c2) at (1.5,0,-5) {};
  \node (G2) at (3.5,0.5,-5) {$G_2$};
  
  \draw[ultra thick] (x2) -- (a2) -- (y2) -- (b2) -- (z2) -- (c2) -- (x2);
  \draw[ultra thick, orange!90!black] (a2) to[out=280, in=170] (b2) to[out=0, in=240] (c2);

  \draw[dashed] (x1) -- (x2);
  \draw[dashed] (y1) -- (y2);
  \draw[dashed] (z1) -- (z2);
  \draw[dashed, blue] (c1) to node[draw, solid, midway, right, yshift=-0.5cm, outer sep=4pt] {$\Phi_{12}$} (c2);

  \node [elt, label={right:$x_3$}] (x3) at (0,2,-10) {};
  \node [elt, label={left:$y_3$}] (y3) at (-3,-2,-10) {};
  \node [elt, label={right:$z_3$}] (z3) at (3,-2,-10) {};
  \node [elt, label={[yshift=0.5ex]left:$a_3$}] (a3) at (-1.5,0,-10) {};
  \node [elt, label={below:$b_3$}] (b3) at (0,-2,-10) {};
  \node [elt, label={right:$c_3$}] (c3) at (1.5,0,-10) {};
  \node (G3) at (3.5,0.5,-10) {$G_3$};

  \draw[ultra thick] (x3) -- (a3) -- (y3) -- (b3) -- (z3) -- (c3) -- (x3);
  \draw[ultra thick, green!60!black] (a3) to[out=280, in=170] (b3) to[out=0, in=240] (c3);

  \draw[dashed] (x2) -- (x3);
  \draw[dashed] (y2) -- (y3);
  \draw[dashed] (z2) -- (z3);
  \draw[dashed, blue] (b2) to node[draw, solid, midway, right, yshift=-0.2cm, outer sep=3pt] {$\Phi_{23}$} (b3);

  \draw[dashed, blue] (a1) to[bend right=20] node[draw, solid, midway, left, outer sep=4pt] {$\Phi_{13}$} (a3);
\end{tikzpicture}
\caption{Three superimposed group configurations. The dashed lines indicate interalgebraicities. All three configurations share (up to interalgebraicity) the vertices of the triangle. In addition any two configurations share another point which establishes a quasi-isomorphism between the associated groups by \Cref{thm: quasi-iso}.}
\label{fig: triple stacked}
\end{figure}

\begin{figure}
\begin{tikzpicture}[xscale=1.4, yscale=1.6, elt/.style={fill, circle, thick, inner sep=3pt, outer sep=0pt}, every path/.style={thick}]
  \node [elt, label={above:$x$}] (x) at (0,2) {};
  \node [elt, label={left:$y$}] (y) at (-3,-2) {};
  \node [elt, label={right:$z$}] (z) at (3,-2) {};
  \node [elt, label={left:$a$}] (a) at (-2,-0.66) {};
  \node [elt, label={left:$a'$}] (a') at (-1,0.66) {};
  \node [elt, label={below:$b$}] (b) at (1,-2) {};
  \node [elt, label={below:$b'$}] (b') at (-1,-2) {};
  \node [elt, label={right:$c$}] (c) at (2,-0.66) {};
  \node [elt, label={right:$c'$}] (c') at (1,0.66) {};

  \draw[ultra thick] (x) -- (a') -- (a) -- (y) -- (b') -- (b) -- (z) -- (c) -- (c') -- (x);
  \draw[ultra thick, cyan!80!black] (a) to[out=300, in=180] (b) to[out=0, in=270] (c);
  \draw[ultra thick, green!60!black] (b') to[out=170, in=270] (a) to[out=50, in=180] (c');
  \draw[ultra thick, orange!90!black] (a') to[out=10, in=130] (c) to[out=285, in=0] (b');

  \node (Phi12) [draw, blue, right=of c, xshift=-2ex] {$\Phi_{12}$};
  \node (Phi23) [draw, blue, below=of b', yshift=2ex] {$\Phi_{23}$};
  \node (Phi13) [draw, blue, left=of a, xshift=2ex] {$\Phi_{13}$};
\end{tikzpicture}
\caption{The same configuration as in \Cref{fig: triple stacked} with interalgebraic elements suppressed. For instance $x_1, x_2, x_3$ from the other figure occupy the spot of $x$ here, $a_1$ and $a_3$ occupy $a$, and $a_2$ is $a'$ and so on. Each colored arc belongs to a distinct group. The three points which are contained in two of these arcs deliver quasi-isomorphisms.}
\label{fig: triple compact}
\end{figure}

\begin{theorem} \label{thm: quasi-aut}
In the situation of \Cref{fig: triple stacked,fig: triple compact} there exist a quasi-automorphism $\Phi \le G_1 \times G_1$, a quasi-isomorphism $\Phi_{21} \le G_2 \times G_1$ and an $F$-definable $t \in G_1$ such that $\Phi_{21}(b_2) \cdot t \bumpeq \Phi(y_1)^{-1} z_1$. In particular $b_2$ and $\Phi(y_1)^{-1} z_1$ are interalgebraic.
\end{theorem}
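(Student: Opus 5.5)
The idea is to apply \Cref{thm: quasi-iso} once to each pair of configurations that share a group triple up to interalgebraicity. Then I would compose the resulting quasi-isomorphisms around the ``triangle'' $G_1 \to G_3 \to G_2 \to G_1$ to obtain the quasi-automorphism $\Phi$, and read off the relation for $b_2$ from $b_2 = y_2^{-1} z_2$.

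\textbf{Step 1: three quasi-isomorphisms.} Recall $x_i a_i = y_i$, $y_i b_i = z_i$ and $z_i c_i = x_i$ in $G_i$.
\begin{itemize}
\item Configurations $1$ and $2$ share $x,z$ and $c$ (via $\Phi_{12}$). So the group triples $(z_1,c_1,x_1)$ and $(z_2,c_2,x_2)$ are elementwise interalgebraic, and \Cref{thm: quasi-iso} gives a quasi-isomorphism $\Psi_{12} \le G_1 \times G_2$ and $F$-definable $u_{12},v_{12}$ with
\[z_1 \mathrel{\Psi_{12}} u_{12} z_2, \quad c_1 \mathrel{\Psi_{12}} c_2 v_{12}, \quad x_1 \mathrel{\Psi_{12}} u_{12} x_2 v_{12}.\]
\item In the same way, configurations $2$ and $3$ share the triple $(y,b,z)$, giving $\Psi_{23} \le G_2 \times G_3$ with $y_2 \mathrel{\Psi_{23}} u_{23} y_3$ and $z_2 \mathrel{\Psi_{23}} u_{23} z_3 v_{23}$.
\item Configurations $1$ and $3$ share the triple $(x,a,y)$, giving $\Psi_{13} \le G_1 \times G_3$ with $x_1 \mathrel{\Psi_{13}} u_{13} x_3$ and $y_1 \mathrel{\Psi_{13}} u_{13} y_3 v_{13}$.
\end{itemize}
I would then set $\Phi_{21} \defas \Psi_{12}^{-1}$, so that $\Phi_{21}(z_2) \bumpeq u' z_1$ for an $F$-definable $u'$. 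Here I use that quasi-isomorphisms respect products up to the finite normal subgroups implicit in the $\bumpeq$ notation.

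\textbf{Step 2: the candidate $\Phi$.} Next I would set $\Phi_0 \defas \Psi_{12}^{-1}\circ\Psi_{23}^{-1}\circ\Psi_{13}$, which is a quasi-automorphism of $G_1$. Chasing $y_1$ along this composite gives
\[\Phi_0(y_1) \bumpeq s\,\Phi_{21}(y_2)\,s'\]
for $F$-definable $s,s'$. Hence
\[\Phi_{21}(b_2) \bumpeq \Phi_{21}(y_2)^{-1}\Phi_{21}(z_2) \bumpeq s'\,\Phi_0(y_1)^{-1} s\, u' z_1 .\]

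\textbf{Step 3: bookkeeping of constants (the main obstacle).} The difficulty is to move all $F$-definable constants to a single right factor $t$. I would handle the two sides separately.
\begin{itemize}
\item \emph{Left constant $s'$.} Replace $\Phi_{21}$ by its composite with the inner automorphism $g\mapsto s'^{-1}gs'$ of $G_1$. This is still a quasi-isomorphism, and it turns the left $s'$ into a right factor.
\item \emph{Middle constant $su'$.} Here I would use the identities inside the configurations more carefully. The left translations produced by \Cref{thm: quasi-iso} on the $y$- and $z$-coordinates of a shared triple coincide ($u_{23}$ for both $y_2$ and $z_2$). Consistency along $x$, which passes through all three configurations, forces the translation constants of $y$ and $z$ on the $G_1$-side to match up to an inner automorphism.
\end{itemize}
Concretely, I would replace $\Phi_0$ by $\Phi \defas \iota\circ\Phi_0$, where $\iota$ is a suitable inner automorphism composed with translation-compatible adjustments, so that $\Phi(y_1)^{-1}z_1 \bumpeq \Phi_{21}(b_2)\,t$. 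If this cancellation does not work cleanly, the fallback is to redo Step 1 after first translating the configurations by $F$-definable elements (which preserves being group configurations) so that $x_1,y_1,z_1$ are matched by $\Psi_{12}$ and $\Psi_{13}$ without left constants.

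\textbf{Conclusion.} Finally, interalgebraicity of $b_2$ with $\Phi(y_1)^{-1}z_1$ follows because $\Phi_{21}$ is a quasi-isomorphism (finite-to-finite) and $t$ is $F$-definable.
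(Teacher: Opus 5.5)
Your Steps 1 and 2 are correct and follow the paper's strategy. The paper also applies \Cref{thm: quasi-iso} three times to the shared triples, composes around $G_1\to G_3\to G_2\to G_1$, and reads off $b_2$ from $y_2^{-1}z_2$.

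Step 3 is where your proposal stays hedged. It needs no input from the configuration, and the appeal to ``consistency along $x$'' is unnecessary; the $x$-relations are never used. Put $m = su'$ and $\iota_g(h) = g^{-1}hg$. Then $\Phi_0(y_1)^{-1}m = m\,\bigl((\iota_m\circ\Phi_0)(y_1)\bigr)^{-1}$. Setting $\Phi \defas \iota_m\circ\Phi_0$, your Step 2 relation becomes $\Phi(y_1)^{-1}z_1 \bumpeq (s'm)^{-1}\Phi_{21}(b_2) = (\iota_{s'm}\circ\Phi_{21})(b_2)\cdot(s'm)^{-1}$. The theorem only asserts that some $\Phi$ and some $\Phi_{21}$ exist, so you may replace $\Phi_{21}$ by $\iota_{s'm}\circ\Phi_{21}$ and take $t=(s'm)^{-1}$. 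This is rigorous for two reasons:
\begin{enumerate}
\item Composing with the graph of an $F$-definable inner automorphism keeps quasi-isomorphisms quasi-isomorphisms.
\item Conjugation preserves the finite normal subgroups implicit in $\bumpeq$.
\end{enumerate}
Moving the middle constant creates a new left constant $m$. So absorb the combined left constant $s'm$ into $\Phi_{21}$ at the end, rather than handling $s'$ first.

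The paper avoids this bookkeeping altogether by choosing the triples and normalizations in advance, which is essentially your fallback:
\begin{enumerate}
\item It uses $(y_1,a_1^{-1},x_1)$, so that $y_1$ acquires only a left constant $s_3$.
\item It translates $z_3$ by the same $s_3$, so that $(\hat y_3,b_3,\hat z_3)$ is again a triple. This forces $\hat y_2=s_2y_2$ and $\hat z_2=s_2z_2t_2$ to share a left constant, which cancels in $\hat y_2^{-1}\hat z_2=b_2t_2$.
\item It finally uses $(x^{-1},z,c^{-1})$, so that $z$ sits in the middle slot and $z_1$ picks up only a right constant.
\end{enumerate}
The result is $\Phi=\Phi_{21}\circ\Phi_{32}\circ\Phi_{13}$ exactly, with $t=\Phi_{21}(t_2)t_1^{-1}$. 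Your symmetric version yields the round-trip composite only up to $F$-definable inner automorphisms of $G_1$. That is harmless for the statement, and for its use in \Cref{thm: Gm transfer}, which is formulated up to inner automorphisms anyway.
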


\begin{proof}
The proof consists of three applications of \Cref{thm: quasi-iso} to interalgebraic group triples which complete a round trip through the three layers of \Cref{fig: triple stacked}.
\begin{enumerate}
\item
Apply it to the interalgebraic group triples $(y_1, a_1^{-1}, x_1)$ and $(y_3, a_3^{-1}, x_3)$. This gives $F$-definable elements $s_3, t_3 \in G_3$ and a quasi-isomorphism $\Phi_{13} \le G_1 \times G_3$ such that
\begin{alignat*}{6}
  y_1 &{}\mathrel{\Phi_{13}} \hat y_3 \defas s_3 y_3, &\quad& a_1^{-1} &{}\mathrel{\Phi_{13}} \hat a_3^{-1} \defas a_3^{-1} t_3, &\quad& x_1 &{}\mathrel{\Phi_{13}} \hat x_3 \defas s_3 x_3 t_3.
\end{alignat*}
In addition define $\hat z_3 \defas s_3 z_3$ so that $\hat y_3^{-1} \hat z_3 = y_3^{-1} z_3 = b_3$. Thus $(\hat y_3, b_3, \hat z_3)$ is a group triple for $G_3$ which is elementwise interdefinable with $(y_3, b_3, z_3)$.

\item
Similarly obtain a quasi-isomorphism $\Phi_{32} \le G_3 \times G_2$ from the triples $(\hat y_3, b_3, \hat z_3)$ and $(y_2, b_2, z_2)$. Thus there are $F$-definable $s_2, t_2 \in G_2$ such that
\begin{alignat*}{6}
  \hat y_3 &{}\mathrel{\Phi_{32}} \hat y_2 \defas s_2 y_2, &\quad& b_3 &{}\mathrel{\Phi_{32}} \hat b_2 \defas b_2 t_2, &\quad& \hat z_3 &{}\mathrel{\Phi_{32}} \hat z_2 \defas s_2 z_2 t_2.
\end{alignat*}
With $\hat x_2 \defas s_2 x_2$ it follows that $\hat z_2^{-1} \hat x_2 = t_2^{-1} c_2 \asdef \hat c_2$.

\item
\Cref{thm: quasi-iso} also applies to the configurations $(\hat x_2^{-1}, \hat z_2, \hat c_2^{-1})$ and $(x_1^{-1}, z_1, c_1^{-1})$. We obtain $F$-definable elements $s_1, t_1 \in G_1$ and a quasi-isomorphism $\Phi_{21} \le G_2 \times G_1$ such that
\begin{alignat*}{6}
  \hat x_2^{-1} &{}\mathrel{\Phi_{21}} \hat x_1^{-1} \defas s_1 x_1^{-1}, &\quad& \hat z_2 &{}\mathrel{\Phi_{21}} \hat z_1 \defas z_1 t_1, &\quad& \hat c_2^{-1} &{}\mathrel{\Phi_{21}} \hat c_1^{-1} \defas s_1 c_1^{-1} t_1.
\end{alignat*}
\end{enumerate}

Modulo the finite kernels of these quasi-isomorphisms we have $\hat y_2 \bumpeq (\Phi_{32} \circ \Phi_{13})(y_1)$ and $\hat z_1 \bumpeq \Phi_{21}(\hat z_2)$. Thus with the quasi-automorphism $\Phi = \Phi_{21} \circ \Phi_{32} \circ \Phi_{13}$ of $G_1$ this yields
\begin{align*}
  \Phi(y_1)^{-1} \cdot \hat z_1 &\bumpeq \Phi_{21}(\hat y_2^{-1} \hat z_2) \bumpeq \Phi_{21}(y_2^{-1} s_2^{-1} s_2 z_2 t_2) \\
  &\bumpeq \Phi_{21}(b_2 t_2) \bumpeq \Phi_{21}(b_2) \cdot \Phi_{21}(t_2).
\end{align*}
Hence $\Phi(y_1)^{-1} z_1 \bumpeq \Phi_{21}(b_2) \cdot [\Phi_{21}(t_2) \cdot t_1^{-1}]$. The bracketed term on the right-hand side is definable over~$F$.
\end{proof}

\begin{remark}
The quasi-automorphism in \Cref{thm: quasi-aut} is unique up to $\bumpeq$-equivalence given the $\bumpeq$-class of $\Phi_{21}(b_2)$. To see this, suppose that $\Phi, \Phi' \le G_1 \times G_1$ are two quasi-automorphisms such that for $F$-definable $t, t' \in G_1$ we have $\Phi(y_1)^{-1} z_1 \bumpeq \Phi_{21}(b_2) \cdot t$ and $\Phi'(y_1)^{-1} z_1 \bumpeq \Phi_{21}(b_2) \cdot t'$. Then $\Phi(y_1) \bumpeq t^{-1} \cdot t' \cdot \Phi'(y_1)$. After quotienting by a finite normal subgroup of $G_1$, this relation becomes an equality of maps in $y_1$ defined over~$F$. These maps are definable group homomorphisms and by \Cref{lemma: Def hom} Zariski-continuous. From the generic $y_1 \in G_1$ the equality extends by continuity to all of $G_1$ and for $e_{G_1}$ it implies $t \bumpeq t'$ which in turn ensures $\Phi \bumpeq \Phi'$.
\end{remark}

\begin{remark}
The gadget we construct contains the one constructed in \cite[Figure 12]{EH} and is compatible with it, although our construction is slightly less efficient, requiring an extra point.
\end{remark}

\subsection{Quasi-automorphisms induced through a quasi-epimorphism}
Consider two configurations of the type \Cref{fig: triple stacked} which are overlaid, where the primary configuration is named as in the figure and the secondary configuration is primed, for instance there is $y_1, y_2$ in the primary configuration and $y_1', y_2'$ in the secondary configuration. Assume each primed element is algebraic over the corresponding unprimed one. Thus we have groups $G_1$, $G_2$, and $G_3$, as well as $G_1'$, $G_2'$, and $G_3'$. \Cref{thm: quasi-aut} applies to the unprimed (``primary'') as well as to the primed (``secondary'') configuration separately: for the primary configuration, we get two quasi-isomorphisms $S = \Phi_{21}^{-1} \circ \Phi$ and $R = \Phi_{21}^{-1}$ from $G_1$ to $G_2$. For the secondary configuration we get $S' = \Phi_{21}'^{-1} \circ \Phi'$ and $R' = \Phi_{21}'^{-1}$ from $G_1'$ to $G_2'$. These satisfy:
\begin{gather}
  \label{eq: Gm transfer 1}
  S(y_1)^{-1} R(z_1) \bumpeq b_2 t, \;\text{ and }\; S'(y_1')^{-1} R'(z_1') \bumpeq b_2' t'.
\end{gather}
Moreover, \Cref{thm: quasi-iso}, applied once to $(y_1^{-1}, z_1, b_1)$ and once to a group triple in which $b_2$ is the middle element (e.g. $(y_2, b_2, z_2)$), provides also quasi-epimorphisms $\Psi_1 \le G_1 \times G_1'$ and $\Psi_2 \le G_2 \times G_2'$ with
\begin{gather}
  \label{eq: Gm transfer 2}
  \Psi_1(y_1)^{-1} \bumpeq s_1' y_1'^{-1}, \quad \Psi_1(z_1) \bumpeq z_1' t_1', \;\text{ and }\; \Psi_2(b_2) \bumpeq b_2' t_2',
\end{gather}
where the translations $t, t', s_1', \ldots$ are definable over~$F$.

\begin{theorem} \label{thm: Gm transfer}
In this situation $\Psi_1 \circ \Phi \bumpeq C \circ \Phi' \circ \Psi_1$ holds for an inner automorphism $C(x) = c^{-1} \cdot x \cdot c$ by an $F$-definable element $c \in G_1'$.
\end{theorem}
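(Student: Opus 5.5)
The plan is to compare the two instances of \eqref{eq: Gm transfer 1} after transporting the primary one through $\Psi_2$ and the secondary one through $\Psi_1$. This gives a single identity in the two independent generics $y_1, z_1$. We then separate variables, which shows that two pairs of quasi-homomorphisms agree up to $F$-definable translations. Finally we turn translations into a conjugation. Throughout, recall $\Phi = R^{-1}\circ S$ and $\Phi' = R'^{-1}\circ S'$, so the claim is equivalent to $\Psi_1 \circ R^{-1}\circ S \bumpeq C\circ R'^{-1}\circ S'\circ \Psi_1$.

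\textbf{Step 1 (two expressions for $b_2'$).} Apply $\Psi_2$ to the primary relation $S(y_1)^{-1}R(z_1)\bumpeq b_2 t$. Using \eqref{eq: Gm transfer 2} this gives
\[(\Psi_2 S)(y_1)^{-1}\,(\Psi_2 R)(z_1) \bumpeq b_2'\, d_1\]
for some $F$-definable $d_1$. From \eqref{eq: Gm transfer 2} we also have $y_1' \bumpeq \Psi_1(y_1)\, s_1'$ and $z_1' \bumpeq \Psi_1(z_1)\, t_1'^{-1}$. Substituting these into the secondary relation, and using that $S', R'$ are (quasi-)homomorphisms, gives
\[(S'\Psi_1)(y_1)^{-1}\,(R'\Psi_1)(z_1) \bumpeq e_1\, b_2'\, d_2\]
with $F$-definable $e_1$ (coming from $S'(s_1')$) and $d_2$. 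Eliminating $b_2'$ yields
\[ (S'\Psi_1)(y_1)\, e_1\, (\Psi_2 S)(y_1)^{-1} \bumpeq (R'\Psi_1)(z_1)\, d\, (\Psi_2 R)(z_1)^{-1}\]
for an $F$-definable $d$. Both sides are understood modulo one fixed finite normal subgroup of $G_2'$, obtained by composing the finitely many kernels involved.

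\textbf{Step 2 (separation of variables).} In the last display, the left side is in $\dcl(F y_1)$ modulo a finite group, hence in $\acl(F y_1)$, and the right side is in $\acl(F z_1)$. Since $y_1 \indep_F z_1$, the common value is algebraic over $F$, hence $F$-definable modulo the finite subgroup, say equal to $g$. After passing to the quotient by a finite normal subgroup, $(S'\Psi_1)(y) \cdot e_1 \cdot (\Psi_2 S)(y)^{-1} = g$ is an $F$-definable equation between definable homomorphisms, valid at the generic $y_1$. By \Cref{lemma: Def hom} these maps are Zariski-continuous, so the equation holds on all of $G_1$, exactly as in the remark after \Cref{thm: quasi-aut}. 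Evaluating at $e_{G_1}$ gives $e_1 \bumpeq g$, and hence $S'\circ\Psi_1 \bumpeq C_g\circ \Psi_2\circ S$ for the inner automorphism $C_g$ by $g$ (or, if $e_1$ is trivial up to finite error, simply $S'\Psi_1\bumpeq \Psi_2 S$). The same argument on the $z_1$ side gives $R'\circ\Psi_1 \bumpeq C_{h}\circ\Psi_2\circ R$ for an $F$-definable $h \in G_2'$.

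\textbf{Step 3 (assembling).} Compose the two identities. From $R'\Psi_1 \bumpeq C_h \Psi_2 R$ we get $\Psi_1 R^{-1} \bumpeq R'^{-1} C_h \Psi_2$, and therefore
\[\Psi_1 R^{-1} S \bumpeq R'^{-1} C_h \Psi_2 S \bumpeq R'^{-1} C_h C_g^{-1} S' \Psi_1.\]
Since $R'^{-1}$ is a quasi-isomorphism, $R'^{-1}\circ C_{k}\circ R' \bumpeq C_{c}$, where $c$ is any $F$-definable element of $G_1'$ with $c \mathrel{R'^{-1}} k$ (here $k = g^{-1}h$ up to order). Such an element exists because $F$ is algebraically closed. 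This gives $\Psi_1\circ\Phi \bumpeq C\circ\Phi'\circ\Psi_1$.

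\textbf{Main obstacle.} The delicate point is making Steps 1--3 rigorous with quasi-epimorphisms rather than functions. $\Psi_1,\Psi_2$ need not be invertible, so the manipulations "$\Psi_1 R^{-1} \bumpeq R'^{-1}C_h\Psi_2$" must be carried out as identities of subgroups of products, with $\bumpeq$ extended to quasi-epimorphisms via connected components. One must also check that all the implicit finite normal subgroups can be chosen uniformly, so that the continuity argument in Step 2 applies to genuine homomorphisms between quotient groups. I would handle this by replacing every quasi-morphism by its connected component and working in the quotients by the finitely many relevant finite kernels before invoking \Cref{lemma: Def hom}.
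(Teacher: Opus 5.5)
Your proposal is correct and follows essentially the same route as the paper: you eliminate $b_2'$ between the transported primary and secondary relations, separate the $y_1$- and $z_1$-dependence to obtain $\Psi_2\circ S \bumpeq C\circ S'\circ\Psi_1$ and $\Psi_2\circ R\bumpeq C\circ R'\circ\Psi_1$, and then push the inner automorphism through the quasi-isomorphism $R'^{-1}$. The only variation is in the separation step: you use $\acl(Fy_1)\cap\acl(Fz_1)=\acl(F)$ together with continuity in one variable, whereas the paper extends the identity by continuity to all of $G_1\times G_1$ and then sets each argument to $e_{G_1}$; the paper's version also shows that your two conjugators $g$ and $h$ coincide, but your argument does not need this.
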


\begin{proof}
The given relations \eqref{eq: Gm transfer 1}--\eqref{eq: Gm transfer 2} imply
\begin{equation}
  \label{eq: Gm bumpeq}
  \Psi_2(S(y_1^{-1})) \Psi_2(R(z_1) t^{-1}) t_2'^{-1} \bumpeq b_2' \bumpeq S'(s_1'^{-1} \Psi_1(y_1^{-1})) R'(\Psi_1(z_1) t_1'^{-1}) t'^{-1}.
\end{equation}
For any finite normal subgroup $N' \triangleleft G_2'$ with its canonical projection $\pi\colon G_2' \to G_2'/N'$ the maps $f_1 = \pi \circ \Psi_2 \circ S$, $f_2 = \pi \circ S' \circ \Psi_1$, $g_1 = \pi \circ \Psi_2 \circ R$, and $g_2 = \pi \circ R' \circ \Psi_1$ are all homomorphisms of algebraic groups. We may choose $N' \triangleleft G_2'$ finite but big enough (depending on all the quasi-morphisms involved) so that \eqref{eq: Gm bumpeq} becomes
\[
  f_2(y_1^{-1})^{-1} \cdot \pi(c_1) \cdot f_1(y_1^{-1}) = g_2(z_1) \cdot \pi(c_2) \cdot g_1(z_1)^{-1} \text{ in $G_2'/N'$},
\]
with some constants $c_1, c_2 \in G_2'$ definable over~$F$.
Since $(y_1^{-1}, z_1) \in G_1 \times G_1$ is generic and both sides are Zariski-continuous, the equality $f_2(a)^{-1} \cdot \pi(c_1) \cdot f_1(a) = g_2(b) \cdot \pi(c_2) \cdot g_1(b)^{-1}$ holds for all $a, b \in G_1$. In particular the choices $a = e_{G_1}$ and $b = e_{G_1}$ show that $\pi(c_1) f_1 = f_2 \pi(c_2)$ and $\pi(c_1) g_1 = g_2 \pi(c_2)$ and even $\pi(c_1) = \pi(c_2)$, so $c_1 \bumpeq c_2$.
Unraveling the definitions of $f_1, f_2, g_1, g_2$ and writing $C_1(a) = c_1^{-1} \cdot a \cdot c_1$ for the inner automorphism this reads
\[
  \Psi_2 \circ S \bumpeq C_1 \circ S' \circ \Psi_1, \quad \Psi_2 \circ R \bumpeq C_1 \circ R' \circ \Psi_1.
\]

If $\Theta$ is any quasi-epimorphism then $C_1 \circ \Theta \bumpeq \Theta \circ C_2$ for some other inner automorphism~$C_2$ on the domain of~$\Theta$. This allows us to shuffle inner automorphism around as needed and deduce
\begin{align*}
  R'^{-1} \circ (\Psi_2 \circ S) &\bumpeq C_2 \circ R'^{-1} \circ S' \circ \Psi_1 \bumpeq C_2 \circ \Phi' \circ \Psi_1, \;\text{ and also} \\
  (R'^{-1} \circ \Psi_2) \circ S &\bumpeq C_3 \circ \Psi_1 \circ R^{-1} \circ S \bumpeq C_3 \circ \Psi_1 \circ \Phi,
\end{align*}
for two $F$-definable inner automorphisms $C_2$ and $C_3$. Note that in these manipulations the inner automorphisms are only ever pushed back and forth through quasi-isomorphisms. This finally gives
\[
  \Psi_1 \circ \Phi \bumpeq C \circ \Phi' \circ \Psi_1
\]
for $C = C_3^{-1} \circ C_2$ another $F$-definable inner automorphism.
\end{proof}

\section{The affine group configuration}
\label{sec: Aff}

\subsection{Facts about the affine group}

For a fixed integer $k \neq 0$ consider the algebraic group $G = \Gm \ltimes_\theta \Ga$ where $s \in \Gm$ acts on $\Ga$ via $s.a = \theta_s(a) = s^k a$. For $k = 1$ this is the \emph{affine group} $\Aff$, and in any case it is a connected extension of~$\Aff$ by a finite group. \Cref{sec: Aff recover} shows how such a group can be recovered from an augmented rank-2 group configuration. In this preliminary section we collect results about its group structure and quasi-automorphisms that we need later.

\begin{lemma} \label{lemma: ext Aff}
Let $G = \Gm \ltimes_\theta \Ga$ with $\theta_s(a) = s^k a$ for some $k \neq 0$.
\begin{enumerate}
\item \label{ext Aff: center} The center $Z(G)$ is the kernel of the homomorphism $\tau\colon G \to \Aff$ given by $\tau(s,a) = (s^k, a)$.
\item \label{ext Aff: commutators} The derived subgroup $[G,G]$ equals $N = \Set{ (1,a) \in G }$ and is isomorphic to~$\Ga$.
\item \label{ext Aff: normal}
Every normal subgroup of $G$ either contains~$N$ or is contained in $Z(G)$.
\item \label{ext Aff: complements} The complements of $N$ in $G$ are all conjugate via elements of~$[G,G]$.
\end{enumerate}
\end{lemma}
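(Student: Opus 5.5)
All four parts follow from explicit computation with the multiplication law
\[(s,a)(t,b) = (st,\, a + s^k b), \qquad (s,a)^{-1} = (s^{-1}, -s^{-k}a).\]
Throughout I write $\mu_k$ for the finite group of $k$-th roots of unity in $\Gm$; it is trivial when $k$ is a power of $p$ up to sign.

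For \eqref{ext Aff: center}, I compare $(s,a)(t,b)$ with $(t,b)(s,a) = (st,\, b + t^k a)$. Equality for all $(t,b)$ means $a + s^k b = b + t^k a$ for all $t,b$. Taking $t=1$ forces $s^k = 1$, and then taking $t$ with $t^k \ne 1$ forces $a = 0$. Hence $Z(G) = \mu_k \times \{0\}$, which is exactly $\ker\tau$. I also check directly that $\tau$ is a homomorphism.

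For \eqref{ext Aff: commutators}, the quotient $G/N \cong \Gm$ is abelian, so $[G,G] \subseteq N$. Conversely, the commutator of $(t,0)$ and $(1,b)$ is $(1, (t^k - 1)b)$. Choosing $t$ with $t^k \neq 1$, which is possible since $\Gm$ is infinite, and letting $b$ vary gives all of $N$. Also $N \cong \Ga$ via $(1,a) \mapsto a$.

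For \eqref{ext Aff: normal}, let $H \triangleleft G$ with $H \not\subseteq Z(G)$, and pick $h = (s,a) \in H \setminus Z(G)$.
\begin{itemize}
\item If $s^k \neq 1$, the commutator $[h,(1,b)]$ lies in $H \cap N$ and equals $(1, (s^k-1)b')$ for a $b'$ that runs over all of $\Ga$ as $b$ does. So $N \subseteq H$.
\item If $s^k = 1$, then $a \neq 0$. Commuting with $(t,0)$ yields $(1, c\,a)$ with $c = 1 - t^k$ (up to a unit factor); these coefficients range over an infinite set. Conjugating $(1,x)$ by $(t,0)$ sends it to $(1, t^k x)$, so $H \cap N$ is a $\Gm$-stable subgroup of $\Ga$ containing a nonzero element. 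Its closure under the $t^k$-scaling is then all of $\Ga$: every $u \in \mathbb{K}$ is a sum of two $k$-th powers times $a$, or more simply, the additive subgroup generated by $\{t^k a\}$ is a subfield-span. Alternatively, $\{t^k a\} \cup \{0\}$ is already all of $\mathbb{K}a = \Ga$ when $\mathbb{K}$ is algebraically closed, since every element is a $k$-th power.
\end{itemize}
In either case $N \subseteq H$. The key point is that $\mathbb{K}^\times = (\mathbb{K}^\times)^k$ in an algebraically closed field, so the conjugation action of $\Gm$ on $N$ is transitive on nonzero elements. This also simplifies the first case.

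For \eqref{ext Aff: complements}, a complement $K$ of $N$ maps isomorphically onto $\Gm$. So $K = \{(s, \delta(s))\}$ for a map $\delta$ satisfying the cocycle identity $\delta(st) = \delta(s) + s^k\delta(t)$. Conjugating by $(1,c)$ replaces $\delta(s)$ by $\delta(s) + (1 - s^k)c$. I fix $s_0$ with $s_0^k \neq 1$ and set $c = \delta(s_0)/(s_0^k - 1)$, which makes the new cocycle vanish at $s_0$. Applying the cocycle identity to $st = ts$ gives
\[\delta(s)(1 - t^k) = \delta(t)(1 - s^k),\]
so $\delta(s_0) = 0$ forces $\delta(s) = 0$ whenever $s_0^k \neq 1$. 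For $s$ with $s^k = 1$ I instead use $s s_0$. Hence $K$ is conjugate to $\Gm \times \{0\}$.

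This is the step I expect to need most care. It is where the argument might silently require the complement to be definable, or at least an abstract subgroup. The abstract cocycle argument above avoids needing any definability.
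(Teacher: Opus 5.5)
Your proof is correct and follows the paper's argument for all four parts. Parts (1) and (2) use the same commutator computations. Part (3) uses the same idea: produce a nontrivial element of $H\cap N$ as a commutator, then note that conjugation by $(t,0)$ acts transitively on $N\setminus\{(1,0)\}$ because every element of $\mathbb{K}^\times$ is a $k$-th power, so your case split is unnecessary, as you observe.

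In (4) you write the complement as the graph of a crossed homomorphism, exactly as the paper does. The paper then takes $s,t$ algebraically independent generics, which requires the complement to be definable. You instead use that $\delta(s)(1-t^k)=\delta(t)(1-s^k)$ holds for all $s,t$ and normalize $\delta(s_0)=0$. This is a slightly cleaner argument and works for abstract complements. Taking $t=s_0$ already gives $\delta(s)=0$ for every $s$, so your extra step for $s^k=1$ is redundant.
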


\begin{proof}
(\ref{ext Aff: center})
For $(s,a)$ to be in the center, the commutator $[(s,a), (t,b)] = (1, b(s^k - 1) - a(t^k - 1))$ must evaluate to~$(1,0)$ for all $(t,b) \in G$. For $t = b = 1$ we see that $s^k = 1$ is necessary. Over an algebraically closed field there exists $t \in \Gm$ which is not a $k$-th root of unity. This shows that also $a = 0$ is necessary. Hence $Z(G) \subseteq \ker \tau = \Set{ (s,0) \in G \mid s^k = 1 }$. The reverse inclusion is immediate.
Thus, the center is a subgroup of $\Gm$ and therefore cyclic. Its cardinality depends on $k$ and the characteristic~$p$: if $k = p^e k'$ with $p \nmid k'$ then $Z(G) \simeq \ZZ/k'\ZZ$.

(\ref{ext Aff: commutators})
The set $N$ is clearly a subgroup of~$G$ which contains every commutator, so $[G,G] \subseteq N$. For the other inclusion choose any $(1,c) \in N$ and set $t = 1, a = 0$, choose $s \in \Gm$ to be not a $k$-th root of unity, and $b = c (s^k-1)^{-1}$. Then the commutator $[(s,a), (t,b)] = (1,c)$.

(\ref{ext Aff: normal})
Let $N' \trianglelefteq G$ and assume that there is $g \in N' \setminus Z(G)$. Then there exists $h \in G$ such that $(1,0) \neq [h,g] \in N$ by (\ref{ext Aff: commutators}). Moreover $[h,g]g = hgh^{-1} \in N'$ and thus $[h,g] \in N'$ as well.
Thus there is a non-identity element $[h,g] \in N \cap N'$.
Observe that any two non-identity elements $(1,a), (1,b) \in N$ are conjugate: to have $(1,b) = (c,0)(1,a)(c,0)^{-1} = (1,c^k a)$ just pick $c$ as a $k$-th root of $\frac{b}{a}$. The conjugation orbit of $[h,g]$ does not leave the normal subgroup $N'$ and so $N \subseteq N'$.

(\ref{ext Aff: complements})
Let $K$ be any complement of~$N$ in~$G$ and let $H = \{(s,0) \mid s \in \Gm\}$ be the ``standard'' complement. Then the quotient $G \to \Gm$ restricts to an isomorphism on both $K$ and $H$, giving an isomorphism $H \to K$. Since the quotient map takes $(s,a)$ to $s$, this isomorphism maps each $(s,0) \in H$ to some $(s, \alpha(s)) \in K$, and $\alpha(s)$ is uniquely determined by $s$. In other words, $\alpha : \Gm \to \Ga$ is a definable function, and $K = \{(s,\alpha(s)) \mid s \in \Gm\}$. The fact that $K$ is a subgroup implies that for all $s,t \in \Gm$: $\alpha(st) = s^k \alpha(t) + \alpha(s)$. By taking $s,t$ to be algebraically independent over the field of definition of $K$ and writing $\alpha(st)=\alpha(ts)$, we see $s^k \alpha(t) + \alpha(s) = t^k \alpha(s) + \alpha(t)$. Hence $\alpha(s)(1-t^k) = \alpha(t)(1-s^k)$, or
\[\alpha(s)/(1-s^k) = \alpha(t)/(1-t^k).\]
Independence of $s,t$ implies that the ratio on the left is independent of $s$, and hence constant. So $\alpha(s)/(1-s^k) \eqqcolon w$ is constant in $s$, or
\[\alpha(s) = w(1-s^k).\]
Thus, $K = \Set{ (s, w(1-s^k)) \mid s \in \Gm }$ but this is conjugate to $H$ via~$(1,-w) \in N$.
\end{proof}
\begin{lemma}
\label[lemma]{lemma: semidirect QAut}
    Let $G = \Gm \ltimes_\theta \Ga$ and let  $\Phi \le G \times G$ be a definable and connected quasi-automorphism. Then $\Phi$ is the graph of a definable automorphism of $G$.
\end{lemma}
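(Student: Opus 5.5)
Write $N_1=\ker(\pi_1\restriction_\Phi)$ and $N_2=\ker(\pi_2\restriction_\Phi)$, viewed as finite subgroups of $G$ via $N_1=\{e\}\times K_2$ and $N_2=K_1\times\{e\}$. Here $K_1,K_2\le G$ are finite and normal, because $N_1,N_2$ are normal in $\Phi$ and the projections are surjective. The goal is to show $K_1=K_2=\{e\}$. Once that holds, $\Phi$ is the graph of a bijective definable homomorphism $G\to G$, which is a definable automorphism.

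The first step is to locate $K_1,K_2$ using \Cref{lemma: ext Aff}(\ref{ext Aff: normal}). A finite normal subgroup cannot contain $N\simeq\Ga$, which is infinite. Hence $K_1,K_2\subseteq Z(G)=\Set{(s,0)\mid s^k=1}$, a finite cyclic group of order prime to $p$.

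By \Cref{rem:quasi_iso_as_map}, $\Phi$ induces an isomorphism $f\colon G/K_1\isoto G/K_2$. Write $\pi_i\colon G\to G/K_i$ for the quotient maps. Then $\Phi=\Set{(g,h)\mid f(\pi_1 g)=\pi_2 h}$.

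The second step is to show this forces $K_2=\{e\}$, using connectedness of $\Phi$. Consider the map $G\to G/K_2$, $g\mapsto f(\pi_1 g)$. Pulling back along $\pi_2$ gives a $|K_2|$-sheeted cover of $G$. Connectedness of $\Phi$ means this cover is connected, so $\pi_2\colon G\to G/K_2$ does not split over the image. I plan to exploit the explicit structure of $G/K$ for central $K=\mu_m\times\{0\}\subseteq Z(G)$. The map $(s,a)\mapsto(s^m,a)$ identifies $G/K$ with $\Gm\ltimes_{\theta'}\Ga$, where $\theta'_t(a)=t^{k/m}a$ and $m\mid k$ (meaning $m\mid k'$). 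So each $G/K_i$ is again a group of the same type, with exponent $k/m_i$. By \Cref{lemma: ext Aff}(\ref{ext Aff: center}), an isomorphism between such groups must match their centers, whose orders are $k'/m_1$ and $k'/m_2$. It must also match their derived subgroups and, up to conjugation by (\ref{ext Aff: complements}), their complements $\Gm$. After composing with an inner automorphism, $f$ therefore restricts to a definable automorphism of $\Gm$, namely $t\mapsto t^{\pm1}$. On $N$ it restricts to a map $a\mapsto \lambda\,\Frob^j(a)$ compatible with the action.

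Compatibility with the actions then reads $(t^{\pm1})^{k/m_2}=t^{k/m_1}p^{j}$, up to interpreting Frobenius twists multiplicatively on exponents. Since $p\nmid m_i$, this compatibility equation pins down the relation between $m_1$ and $m_2$. Substituting the explicit form of $f$, $\Phi$ contains the set $\Set{((s,a),(s',a'))\mid s'^{m_2}=s^{\pm m_1p^{j}},\dots}$. Its $\Gm$-part $\Set{(s,s')\mid s'^{m_2}=s^{\pm m_1 p^j}}$ is connected only when $\gcd(m_2,m_1)$ is trivial in the appropriate sense. Combined with the exponent compatibility, this forces $m_1=m_2=1$.

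The main obstacle is this last bookkeeping: tracking the Frobenius twist and the sign against the exponents $m_1,m_2$ in the center orders, and concluding that a connected subgroup of $\Gm\times\Gm$ of the form $s'^{m_2}=s^{\pm m_1 p^j}$ that projects as required must have $m_1=m_2=1$. I would handle this by reducing to the torus $\Gm\times\Gm$. Its connected one-dimensional subgroups are exactly $\Set{s^u=s'^v}$ with $\gcd(u,v)=1$ (up to inseparable twists). The $\Gm$-part of $\Phi^0$ is one of these. I would then check that the kernels on the $\Ga$-part are trivial automatically, since a map $a\mapsto\lambda\Frob^j a$ composed with an inverse power of Frobenius is injective.
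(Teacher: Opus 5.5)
Your proof is correct, but it takes a different route from the paper's.

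\textbf{The paper's route.} It works with the structure of $\Phi$ itself. Being connected and quasi-isomorphic to $G$, $\Phi$ is an extension of $\Gm$ by $\Ga$. By the classification of such extensions it is again of the form $\Gm\ltimes_{\tilde\theta}\Ga$, with finite cyclic center. For either projection $\pi\colon\Phi\to G$ the paper gets $\ker\pi\le Z(\Phi)$ and $\pi(Z(\Phi))=Z(G)$. So $\ker\pi_L$ and $\ker\pi_R$ are subgroups of the cyclic group $Z(\Phi)$ of the same order $|Z(\Phi)|/|Z(G)|$, hence equal. Since they intersect trivially inside $\Phi\le G\times G$, both are trivial.

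\textbf{Your route.} You stay inside $G$:
\begin{itemize}
\item You identify $K_i=\mu_{m_i}\times\{0\}$ and $G/K_i\simeq\Gm\ltimes_{k/m_i}\Ga$.
\item Comparing the center orders $k'/m_1$ and $k'/m_2$ gives $m_1=m_2=m$.
\item Connectedness of $\Phi$ enters only through its image in $\Gm\times\Gm$. That image is $\{(s,s')\mid (s'\,\bar\alpha(s)^{-1})^m=1\}$, where $\bar\alpha$ is the definable automorphism of $\Gm$ induced by $f$ on abelianizations, i.e.\ on $G/K_i[G,G]\simeq\Gm$ via $(s,a)\mapsto s^{m}$.
\item Since $p\nmid m$, this image is a union of $m$ disjoint cosets of the graph of $\bar\alpha$. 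As the image of a connected group it must be connected, so $m=1$.
\end{itemize}

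\textbf{Comparison.} Your route avoids identifying the isomorphism type of $\Phi$ and the extension theory that requires. In exchange it needs the explicit central quotients of $G$ and the fact (from the lemma on definable homomorphisms) that definable automorphisms of $\Gm$ are $t\mapsto t^{\pm p^u}$. The paper's route is shorter once $\Phi\simeq\Gm\ltimes\Ga$ is known, and it needs no information about automorphisms at all.

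\textbf{Fixes and trims to your write-up.}
\begin{itemize}
\item The induced automorphism of $\Gm$ is $t\mapsto t^{\pm p^u}$ with $u\in\ZZ$, not $t\mapsto t^{\pm 1}$.
\item Once the center orders give $m_1=m_2$, you do not need the passage through complements, the restriction of $f$ to $N$, or the compatibility equation (which is garbled as written).
\item The final check on the $\Ga$-part is superfluous, since $K_1,K_2\subseteq\Gm\times\{0\}$ from the start.
\item The covering-space remark is only heuristic. The actual use of connectedness is the torus-image argument above.
\end{itemize}
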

\begin{proof}
    Denote the projections of $\Phi \le G \times G$ to the left and right factors by $\pi_L$ and $\pi_R$, and denote $N_L = \ker \pi_L$ as well as $N_R = \ker \pi_R$.

    Since $\Phi \le G \times G$ is connected and quasi-isomorphic to $G$, it is itself a semidirect product $\Gm \ltimes_{\tilde \theta} \Ga$: first note that $\Phi$ has a $\Gm$ quotient with a one-dimensional kernel $K$ (first project onto the left $G$ factor via $\pi_L$, then from $G$ to $\Gm$). If $K$ is disconnected, $\Phi / K^0$ is a finite extension of $\Gm$, and $\Phi / K^0$ is connected (since $\Phi$ is) and isomorphic to $\Gm$ (since finite extensions of $\Gm$ cannot be isomorphic to $\Ga$ or to an elliptic curve). Also, $\pi_L$ maps $K^0$ into the kernel $\Ga$ of the quotient map $G \to \Gm$; it follows that $K^0$ is a finite extension of $\Ga$, and it is connected, hence isomorphic to $\Ga$. So there is an exact sequence $1 \to \Ga \to \Phi \to \Gm \to 1$, and $\Phi$ is affine as a subgroup of $G \times G$, hence linear. The result then follows from \cite[Section 19.1]{Humphreys}.

    Observe that for each of these projections $\pi:\Phi \to G$ we have that $\pi^{-1}(Z(G))$ is a finite normal subgroup (since $\ker \pi$ is finite), hence contained in $Z(\Phi)$; and $\pi(Z(\Phi))$ is a finite normal subgroup, hence contained in $Z(G)$. Surjectivity of $\pi$ then implies that $\pi(Z(\Phi)) = Z(G)$. Also, $\ker\pi \le Z(\Phi)$. So $\ker \pi = \ker[Z(\Phi) \overset{\pi}{\twoheadrightarrow} Z(G)]$ is the kernel of a homomorphism between cyclic groups, and it has order $\frac{|Z(\Phi)|}{|Z(G)|}$; but a cyclic group has at most one subgroup of any given order. Therefore $\ker \pi_L = \ker \pi_R$.

    It follows that $\pi_R \circ \pi_L^{-1}: G \to G$ is a well-defined homomorphism: the $\pi_L$-preimage of each $g \in G$ is a coset of $N_L=N_R$, which $\pi_R$ maps to a single element of $G$. It is an automorphism, since $\pi_L \circ \pi_R^{-1}$ is its inverse. These maps are definable, since $\Phi$ and the coordinate projections are both definable. Now we have
    \begin{align*}
      &\hphantom{{}={}} \{(x,y) \in G\times G \mid y = \pi_R \circ \pi_L^{-1}(x)\} \\
      &{}= \{(x,y)\in G \times G \mid \exists z\in G: (x,z) \in \pi_L^{-1}(x), y=\pi_R(x,z)=z\} \\
      &{}= \{(x,y) \in G \times G \mid (x,y) \in \pi_L^{-1}(x)\} = \Phi.
      \qedhere
    \end{align*}
\end{proof}

\begin{corollary} \label{lemma: Aut Aff}
Let $G = \Gm \ltimes_\theta \Ga$ be a connected, finite extension of $\Aff$ and $\Phi \le G \times G$ a quasi-automorphism. Consider the canonical quotient map $\pi\colon G \times G \to \Aff \times \Aff$, with kernel $Z(G) \times Z(G)$. Then the connected component of the identity in $\pi(\Phi)$ is the graph of a definable group automorphism $\varphi \in \Aut_{F}(\Aff)$. %
\end{corollary}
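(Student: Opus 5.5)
The plan is to push $\Phi$ down to $\Aff \times \Aff$ and then apply \Cref{lemma: semidirect QAut} with $k = 1$, i.e.\ to $\Aff = \Gm \ltimes \Ga$ itself.

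First, observe that $\tau\times\tau = \pi\colon G\times G \to \Aff\times\Aff$ is a surjective homomorphism of algebraic groups with finite kernel $Z(G)\times Z(G)$, by \Cref{lemma: ext Aff}(\ref{ext Aff: center}). Hence $\pi(\Phi)$ is a definable (indeed closed) subgroup of $\Aff\times\Aff$ of dimension $\dim\Phi = \dim G = \dim \Aff$. Set $\Psi \defas \pi(\Phi)^0$, the connected component of the identity. This is a connected definable subgroup of $\Aff\times\Aff$ of the same dimension.

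Second, check that $\Psi$ is a quasi-automorphism of $\Aff$.
\begin{itemize}
\item \emph{Surjectivity of the projections.} The left projection of $\pi(\Phi)$ equals $\tau(\pi_L(\Phi)) = \tau(G) = \Aff$. The left projection of $\Psi$ is a closed connected subgroup of finite index in $\Aff$, because $\Psi$ has finite index in $\pi(\Phi)$. Since $\Aff$ is connected, this projection is all of $\Aff$. The right projection is handled symmetrically.
\item \emph{Finite kernels.} The kernel of the left projection $\Psi \to \Aff$ consists of elements $(e,\tau(g))$ with $(h,g)\in\Phi$ and $\tau(h)=e$, i.e.\ $h\in Z(G)$. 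Since $Z(G)$ is finite and $\pi_L\restriction_\Phi$ has finite kernel, there are finitely many such $(h,g)$, so the kernel is finite. Alternatively, this follows directly from the dimension count $\dim\Psi = \dim\Aff$ together with surjectivity.
\end{itemize}

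Third, apply \Cref{lemma: semidirect QAut} to $\Aff = \Gm\ltimes_\theta\Ga$ with $k=1$ and to the connected definable quasi-automorphism $\Psi$. It concludes that $\Psi$ is the graph of a definable automorphism $\varphi$ of $\Aff$. Definability is over $F$, since $\Phi$, $\pi$ and the connected component are all $F$-definable; hence $\varphi\in\Aut_F(\Aff)$.

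The only mild subtlety is the first two steps, namely that the image of a definable subgroup under $\pi$ is definable and closed, and that passing to the connected component preserves surjectivity of the projections. Both are standard facts about algebraic groups: images of algebraic group homomorphisms are closed, and a finite-index closed subgroup of a connected group is the whole group. So I expect no real obstacle, since the heavy lifting is already done in \Cref{lemma: semidirect QAut}.
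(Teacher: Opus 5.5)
Your proposal is correct and follows the paper's proof. You push $\Phi$ forward along $\pi$, note that $\pi(\Phi)$ is a two-dimensional subgroup projecting onto each factor with finite kernel, and apply \Cref{lemma: semidirect QAut} (with $k=1$) to its identity component; your explicit check that $\pi(\Phi)^0$ still surjects onto each factor, because a finite-index closed subgroup of the connected group $\Aff$ is all of $\Aff$, fills in a step the paper leaves implicit.
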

\begin{proof}
    The definable subgroup $\pi(\Phi) \le \Aff \times \Aff$ is two dimensional and it projects onto each coordinate of the product with finite kernel. Hence it is a quasi-automorphism, and the previous lemma applies to its identity component.
\end{proof}

\begin{lemma} \label{lemma: Aut G}
Let $\varphi \in \End_{F}(G)$ be a definable group epimorphism with finite kernel. Then there exist $b \in \Ga$, $c \in \Gm$ and $r \in \mathbb{Z}$ such that $\varphi(s,a) = (s^{p^r}, c a^{p^r} + b - \theta_{s^{p^r}}(b))$.
\end{lemma}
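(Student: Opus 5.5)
Here $G = \Gm \ltimes_\theta \Ga$ with $\theta_s(a)=s^k a$, and $\varphi$ is a definable endomorphism that is surjective with finite kernel. The plan is to pin down $\varphi$ by looking at how it acts on the two natural pieces of $G$: the derived subgroup $N=[G,G]\simeq \Ga$ and the standard complement $H=\{(s,0)\}\simeq\Gm$. We first make $\varphi$ algebraic: by \Cref{lemma: Def hom} there is $k_0\ge 0$ with $\Frob^{k_0}\circ\varphi$ a morphism of algebraic groups. Since the final formula is stable under composing with powers of Frobenius (this shifts $r$ and replaces $b,c$ by Frobenius twists), it suffices to treat the case where $\varphi$ is a morphism. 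The conclusion for definable $\varphi$ then follows by composing with $\Frob^{-k_0}$, which is why $r$ is allowed to be negative.

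\emph{Step 1: $N$ and $\Gm$.} Any homomorphism maps commutators to commutators, so $\varphi(N)\subseteq N$. Because $\varphi$ is surjective, $\varphi([G,G])=[G,G]$, and therefore $\varphi$ restricts to a surjective endomorphism of $N\simeq\Ga$ with finite kernel. This restriction is a $p$-polynomial. Passing to $G/N\simeq\Gm$, $\varphi$ induces an endogeny of $\Gm$, which must be $s\mapsto s^m$ with $m\neq 0$. Hence $\varphi(s,a)=(s^m,\ast)$.

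\emph{Step 2: using the twisted action.} For $(s,0)$ and $(1,a)$ we have $(s,0)(1,a)(s,0)^{-1}=(1,s^k a)$. Write $\varphi(1,a)=(1,P(a))$ with $P$ a $p$-polynomial. Equivariance then gives
\[
P(s^k a)=s^{mk}P(a)\quad\text{for all } s.
\]
If $P=\sum_i c_i a^{p^i}$, comparing monomials forces $s^{kp^i}=s^{mk}$ whenever $c_i\neq 0$, i.e.\ $p^i=m$. So $P$ is a single monomial $c\,a^{p^r}$ with $m=p^r$, and $c\neq 0$ by finiteness of the kernel. This is the key step: the twisting rigidly couples the $\Gm$-exponent with the Frobenius exponent on $\Ga$.

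\emph{Step 3: the translation $b$.} The image $\varphi(H)$ is a connected one-dimensional subgroup mapping isomorphically, up to finite kernel, onto $G/N$. Its closure $K$, together with $N$, generates $G$, and since $\varphi(H)\cap N$ is the image of a subgroup of $H$ meeting $N$ trivially modulo the finite kernel, we get $K\cap N$ finite and hence trivial in $\Ga$'s torsion-free setting (or we argue it directly). So $\varphi(H)$ is a complement of $N$. By \Cref{lemma: ext Aff}(\ref{ext Aff: complements}) it is conjugate to $H$ by some $(1,w)$, and concretely $\varphi(s,0)=(s^{p^r},\,w(1-s^{kp^r}))$, which is $(s^{p^r}, b-\theta_{s^{p^r}}(b))$ with $b=w$. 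Finally,
\[
\varphi(s,a)=\varphi(s,0)\,\varphi(1,s^{-k}a)
\]
(or the analogous factorization matching the multiplication convention). Combining this with Step 2 and simplifying using $m=p^r$ yields $(s^{p^r},\,c a^{p^r}+b-\theta_{s^{p^r}}(b))$, possibly after absorbing a power of $s$ into the convention for the decomposition.

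The main obstacle I expect is Step 3's bookkeeping: checking that $\varphi(H)$ really is a complement (trivial intersection with $N$), and keeping the semidirect-product multiplication convention consistent so that the constant term comes out exactly as $b-\theta_{s^{p^r}}(b)$. If the complement argument is awkward, the fallback is to apply the cocycle computation from the proof of \Cref{lemma: ext Aff}(\ref{ext Aff: complements}) directly to $\alpha(s)$, the second coordinate of $\varphi(s,0)$. That function satisfies $\alpha(st)=s^{kp^r}\alpha(t)+\alpha(s)$, and the same symmetry argument gives $\alpha(s)=w(1-s^{kp^r})$.
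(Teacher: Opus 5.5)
Your argument is correct and is essentially the paper's. The identity $P(s^k a)=s^{mk}P(a)$ forces $\varphi\restriction_N$ to be a single monomial $c\,a^{p^r}$ with $m=p^r$, and the translation term comes from conjugacy of complements (\Cref{lemma: ext Aff}(\ref{ext Aff: complements})). The only differences are bookkeeping: you apply the Frobenius twist once to all of $\varphi$, and you conjugate last rather than first, which works because conjugation on the abelian group $N$ depends only on the $\Gm$-coordinate.

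One correction: $\Ga$ is not torsion-free in characteristic $p$ (every nonzero element has order $p$), so drop that justification. Use the direct argument you allude to instead: the first coordinate of $\varphi(s,0)$ is $s^{p^r}$, which equals $1$ only for $s=1$, so $\varphi(H)\cap N$ is trivial. Alternatively, use your cocycle fallback, which needs no complement argument and yields exactly $\varphi(s,a)=\varphi(s,0)\,\varphi(1,s^{-k}a)=(s^{p^r},\,c\,a^{p^r}+w-\theta_{s^{p^r}}(w))$ with the paper's multiplication convention, with no power of $s$ to absorb.
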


\begin{proof}
The subgroup $H = \Set{ (s,0) \in G }$ is a complement of $N = [G,G]$ in the semidirect product~$G$. The epimorphism $\varphi$ maps $H$ to some other complement $H'$ of~$N$. By \Cref{lemma: ext Aff}~(\ref{ext Aff: complements}) all these complements are conjugate and thus we may assume that $\varphi(H) \subseteq H$ up to an inner automorphism.

Because $\varphi(H) \subseteq H$ and $\varphi(N) \subseteq N$, we may consider $\alpha = \varphi \restriction_H \in \End_{F}(H)$ and $\beta = \varphi \restriction_N \in \End_{F}(N)$. \Cref{lemma: Def hom} shows that $\alpha(s) = s^{n/p^u}$ for some $n, u \ge 0$, and $\beta(a) = \sum_{i=0}^m c_i a^{p^i/p^v}$ is a $p$-Laurent polynomial with $c_m \neq 0$ and $v \ge 0$. We have $\varphi(s,a) = (\alpha(s),\beta(a))$ and the semidirect product structure imposes the relation
\begin{gather}
  \beta(\theta_s(a)) = \theta_{\alpha(s)}(\beta(a)).
\end{gather}
Hence $\beta(s^k a) = s^{kn/p^u} \beta(a)$. It follows that $\beta$ is homogeneous of degree $n/p^u = p^m/p^v$. Thus, with $r = m - v \in \ZZ$ this shows $\varphi(s,a) = (s^{p^r}, c a^{p^r})$ with $c \in F^\times$.
The above deduction applies after a suitable inner automorphism in $G$ which arranges $\varphi(H) = H$. The element to conjugate by can be chosen as $(1,b) \in [G,G]$ and results in a shift in the $\Ga$-coordinate:
\[
  (1,b) (s^{p^r}, c a^{p^r}) (1,b)^{-1} = (s^{p^r}, c a^{p^r} + b(1 - s^{k p^r})) = (s^{p^r}, c a^{p^r} + b - \theta_{s^{p^r}}(b)),
\]
as desired.
\end{proof}

\subsection{Recovering an affine group}
\label{sec: Aff recover}

We study geometric configurations of the following kind: $(x, y, z)$ and $(x', y', z')$ are two group triples over $F$ of connected groups, such that the unprimed elements have rank $2$ and the primed elements have rank $1$. We assume each primed element is algebraic over the corresponding unprimed one, and that the group of the triple $(x',y',z')$ is not of exponent $p$ (and hence is not $G_a$). Further, there is a group triple $(a,b,c)$ for $\Ga$, and elements $q, \tilde{b}$, each of rank $1$, satisfying: 
\begin{itemize}
    \item $a$ is algebraic over $x$, $\tilde{b}$ is algebraic over $y$, and $c$ is algebraic over $z$.
    \item The triple $x',\tilde{b},b$ is a circuit. That is, any two of its elements are algebraically independent, but the triple has rank $2$.
    \item $x \in \acl(x',a)$, $y\in \acl(y',\tilde{b})$, and $z \in \acl(z', c)$.
    \item the triples $b, q, y'$ and $\tilde{b}, q, z'$ are circuits. In particular, $x',y',z',\tilde{b},b,q$ form a group configuration.
\end{itemize}
This is depicted in \Cref{fig: Aff conf}, together with some extra points needed in \Cref{thm: Ga transfer}.
\begin{figure}
\begin{tikzpicture}[xscale=1.4, yscale=1.4]
  \definecolor{myblue}{HTML}{0057FF}
  \definecolor{myred}{HTML}{E0002A}
  \definecolor{mypurple}{HTML}{8A00E6}
  \definecolor{mygray}{HTML}{A8A8A8}
  \definecolor{myhighlight}{HTML}{C9A227}

  \pgfdeclarelayer{behind background}
  \pgfsetlayers{behind background,background,main}
  \tikzset{
    on behind background layer/.style={
      execute at begin scope={\begin{pgfonlayer}{behind background}},
      execute at end scope={\end{pgfonlayer}}
    }
  }

  \begin{scope}[on background layer, every node/.style={draw, circle, ultra thick, inner sep=10pt, mygray}, every path/.style={ultra thick, mygray}]
    \node (x) at (-3,-2) {};
    \node (w) at (-1,-2) {};
    \node (y) at (1,-2) {};
    \node (z) at (3,-2) {};
    
    \node (t1) at (0,1) {};
    \node (t2) at ($(x)!0.40!(t1)$) {};
    \node (t3) at ($(z)!0.40!(t1)$) {};

    \begin{scope}[transform canvas={xshift=-11pt}, shorten >=4pt, shorten <=-11pt]
      \draw (t1) -- ($(t1)!0.30!(t2)$);
      \draw[dashed] ($(t1)!0.415!(t2)$) -- ($(t1)!0.575!(t2)$);
      \draw ($(t1)!0.68!(t2)$) -- (t2);
      \draw (t2) -- (x);
    \end{scope}

    \begin{scope}[transform canvas={xshift=11pt}, shorten >=4pt, shorten <=-11pt]
      \draw (t1) -- ($(t1)!0.30!(t3)$);
      \draw[dashed] ($(t1)!0.415!(t3)$) -- ($(t1)!0.575!(t3)$);
      \draw ($(t1)!0.68!(t3)$) -- (t3);
      \draw (t3) -- (z);
    \end{scope}

    \draw (x) -- ($(x)!0.3!(w)$);
    \draw[dashed] ($(x)!0.4!(w)$) -- ($(x)!0.6!(w)$);
    \draw ($(x)!0.7!(w)$) -- (w);
    \draw (w) -- (y) -- (z);

    \def\eyelash{4pt}
    \begin{scope}[on background layer]
      \foreach \a in { 45, 90, 135 } { \draw (w.\a) -- ++(\a:\eyelash); }
      \foreach \a in { 45, 90, 135 } { \draw (y.\a) -- ++(\a:\eyelash); }
      \foreach \a in { 280, 325, 370 } { \draw (t2.\a) -- ++(\a:\eyelash); }
      \foreach \a in { 160, 205, 250 } { \draw (t3.\a) -- ++(\a:\eyelash); }
    \end{scope}
  \end{scope}

  \begin{scope}[every node/.style={fill, circle, inner sep=2.5pt, outer sep=0pt, myblue}, every path/.style={ultra thick, myblue}]
    \node[above=1pt of x.center] (x') {};
    \node[above=1pt of w.center] (w') {};
    \node[above=1pt of y.center] (y') {};
    \node[above=1pt of z.center] (z') {};
    \node[above=1pt of t1.center] (t1') {};
    \node[above=1pt of t2.center] (t2') {};
    \node[above=1pt of t3.center] (t3') {};

    \begin{scope}[on background layer]
      \draw (t1') -- ($(t1')!0.375!(t2')$);
      \draw[dashed] ($(t1')!0.425!(t2')$) -- ($(t1')!0.625!(t2')$);
      \draw ($(t1')!0.675!(t2')$) -- (t2');
      
      \draw (t1') -- ($(t1')!0.375!(t3')$);
      \draw[dashed] ($(t1')!0.425!(t3')$) -- ($(t1')!0.625!(t3')$);
      \draw ($(t1')!0.675!(t3')$) -- (t3');

      \draw (t2') -- (x');
      \draw (x') -- ($(x')!0.3!(w')$);
      \draw[dashed] ($(x')!0.4!(w')$) -- ($(x')!0.6!(w')$);
      \draw ($(x')!0.7!(w')$) -- (w');
      \draw (w') -- (y') -- (z') -- (t3');
    \end{scope}

    \def\eyelash{2pt}
    \begin{scope}[on background layer]
      \foreach \a in { 45, 90, 135 } { \draw (w'.\a) -- ++(\a:\eyelash); }
      \foreach \a in { 45, 90, 135 } { \draw (y'.\a) -- ++(\a:\eyelash); }
      \foreach \a in { 280, 325, 370 } { \draw (t2'.\a) -- ++(\a:\eyelash); }
      \foreach \a in { 170, 215, 260 } { \draw (t3'.\a) -- ++(\a:\eyelash); }
    \end{scope}
  \end{scope}

  \begin{scope}[every node/.style={fill, circle, inner sep=2.5pt, outer sep=0pt, myred}, every path/.style={ultra thick, myred}]
    \node[below=1pt of x.center] (x'') {};
    \node[below=1pt of w.center, mygray] (w'') {};
    \node[below=1pt of y.center, mygray] (y'') {};
    \node[below=1pt of z.center] (z'') {};
    \node[below=1pt of t1.center, mygray] (t1'') {};
    \node[below=1pt of t2.center, mygray] (t2'') {};
    \node[below=1pt of t3.center, mygray] (t3'') {};

    \begin{scope}[on background layer]
      \draw[name path=Aff-Ga-w, mypurple] plot[smooth, tension=0.6] coordinates { (x') ($(x')!0.65!(w') + (0, 0.4)$) (w'') ($(w'')+(0.5, -1)$) };
      \draw[name path=Aff-Ga-y, mypurple] plot[smooth, tension=0.6] coordinates { (x') ($(x')!0.65!(y') + (0, 0.4)$) (y'') ($(y'')+(0.5, -1)$) };

      \draw[name path=Ga, save path=\GaPath, draw=none] plot[smooth, tension=0.6] coordinates { (x'') ($(w'')+(0.5, -1)$) ($(y'')+(0.5, -1)$) (z'') };

      \path[name intersections={of=Aff-Ga-w and Ga, by={w'''}}];
      \path[name intersections={of=Aff-Ga-y and Ga, by={y'''}}];

      \draw[dash pattern=on 35pt
        off 7pt on 3pt off 3pt on 3pt off 3pt on 3pt off 3pt on 3pt off 3pt on 3pt off 7pt
        on 300pt][use path=\GaPath];
    \end{scope}

    \node (w'''n) at (w''') {};
    \node (y'''n) at (y''') {};

    \node (t1''') at ($(0, -5)$) {};
    \node (t2''') at ($(x)!0.60!(t1''')$) {};
    \node (t3''') at ($(z)!0.60!(t1''')$) {};

    \begin{scope}[on background layer]    
      \draw (t1''') -- ($(t1''')!0.35!(t2''')$);
      \draw[dashed] ($(t1''')!0.40!(t2''')$) -- ($(t1''')!0.60!(t2''')$);
      \draw ($(t1''')!0.65!(t2''')$) -- (t2''');
      
      \draw (t1''') -- ($(t1''')!0.35!(t3''')$);
      \draw[dashed] ($(t1''')!0.40!(t3''')$) -- ($(t1''')!0.60!(t3''')$);
      \draw ($(t1''')!0.65!(t3''')$) -- (t3''');

      \draw (t2''') -- (x'');
      \draw (t3''') -- (z'');
    \end{scope}

    \def\eyelash{2pt}
    \begin{scope}[on background layer]
      \foreach \a in { -55, -100, -145 } { \draw (w'''n.\a) -- ++(\a:\eyelash); }
      \foreach \a in { -35, -80, -125 } { \draw (y'''n.\a) -- ++(\a:\eyelash); }
      \foreach \a in { 350, 395, 440 } { \draw (t2'''.\a) -- ++(\a:\eyelash); }
      \foreach \a in { 90, 135, 180 } { \draw (t3'''.\a) -- ++(\a:\eyelash); }
    \end{scope}
  \end{scope}

  \begin{scope}[on background layer, every node/.style={fill, circle, inner sep=2.5pt, outer sep=0pt, mypurple}, every path/.style={ultra thick, mypurple}]
    \draw[name path=Gm-Ga-w-1] plot[smooth, tension=0.6] coordinates { (w'') ($(w'')!0.2!(z') + (0, -0.7)$) ($(w'')!0.6!(z') + (0, -0.7)$) (z') };
    \draw[name path=Gm-Ga-w-2] plot[smooth, tension=0.6] coordinates { (w') ($(w')!0.5!(w''') + (0.3, 0)$) (w''') };
    \path[name intersections={of=Gm-Ga-w-1 and Gm-Ga-w-2, by={qw}}];
    \node (qwn) at (qw) {};

    \draw[name path=Gm-Ga-y-1] plot[smooth, tension=0.6] coordinates { (y'') ($(y'')!0.2!(z') + (0, -0.3)$) ($(y'')!0.6!(z') + (0, -0.3)$) (z') };
    \draw[name path=Gm-Ga-y-2] plot[smooth, tension=0.6] coordinates { (y') ($(y')!0.5!(y''') + (0.3, 0)$) (y''') };
    \path[name intersections={of=Gm-Ga-y-1 and Gm-Ga-y-2, by={qy}}];
    \node (qyn) at (qy) {};
  \end{scope}

  \begin{scope}[every edge/.style={draw, ->, very thick}, every node/.style={inner sep=2pt, outer sep=0pt}]
    \node[myred] (x''_label) at ($(x''.center) + (-4ex,-1ex)$) {\bm{$a$}};
    \draw[myred] (x''_label) edge (x'');
    \node[myred] (y''_label) at ($(z''.center) + ( 4ex,-1ex)$) {\bm{$c$}};
    \draw[myred] (y''_label) edge (z'');
    \node[mygray] (w''_label) at ($(w''.center) + ( 4.5ex,7ex)$) {\bm{$\tilde{d}$}};
    \draw[mygray] (w''_label) edge[bend left=40] (w'');
    \node[mygray] (z''_label) at ($(y''.center) + (-5ex,7ex)$) {\bm{$\tilde{b}$}};
    \draw[mygray] (z''_label) edge[bend right=40] (y'');

    \node[myblue] (x'_label) at ($(x'.center) + (-3.8ex,1ex)$) {\bm{$x'$}};
    \draw[myblue] (x'_label) edge (x');
    \node[myblue] (z'_label) at ($(z'.center) + ( 4.2ex,1ex)$) {\bm{$z'$}};
    \draw[myblue] (z'_label) edge (z');
    \node[myblue] (w'_label) at ($(w''.center) + (0ex,6ex)$) {\bm{$w'$}};
    \draw[myblue] (w'_label) edge (w');
    \node[myblue] (y'_label) at ($(y''.center) + (0ex,6ex)$) {\bm{$y'$}};
    \draw[myblue] (y'_label) edge (y');

    \node[myred] (w'''_label) at ($(w'''n.center) + (-0.5ex,-3.5ex)$) {\bm{$d$}};
    \draw[myred] (w'''_label) edge (w'''n);
    \node[myred] (y'''_label) at ($(y'''n.center) + (-5ex,-3.5ex)$) {\bm{$b$}};
    \draw[myred] (y'''_label) edge[bend right=20] (y'''n);

    \node[mypurple] (qw_label) at ($(qwn.center) + (3.6ex,1ex)$) {\bm{$r$}};
    \draw[mypurple] (qw_label) edge (qwn);
    \node[mypurple] (qy_label) at ($(qwn.center) + (7ex,0.85ex)$) {\bm{$q$}};
    \draw[mypurple] (qy_label) edge[bend right=15] (qyn);
  \end{scope}

\end{tikzpicture}
\caption{The configuration featuring in the theorems of \Cref{sec: Aff recover} and in \Cref{thm: main}. The points in red are part of a $\Ga$ plane; the points in blue are part of an $H$-plane, for $H$ an algebraic group which is not of exponent $p$ (hence not $\Ga$). The gray circle containing $x'$ is a rank-2 point called $x$; similarly we have rank-2 points $w,y,z$. Half-edges emanating from points indicate that some lines are omitted (these are the lines necessary to obtain group configurations, and for \Cref{thm: quasi-aut} to apply to $w$ and to $d$ in \Cref{thm: Ga transfer}).
}
\label{fig: Aff conf}
\end{figure}
\begin{theorem} \label{thm: Aff conf}
    In the situation above, $(x,y,z)$ is a group triple for a group $G\simeq \Gm \ltimes \Ga$, where the action of $\Gm$ on $\Ga$ is given by $s.a = s^k a$ for some $k \neq 0$. In particular, $G$ is quasi-isomorphic to $\Aff$. Further, the group $H$ of $(x',y',z')$ is quasi-isomorphic to the quotient $\Gm$ of $G$, and the algebraic dependence of $a$ over $x$ induces a quasi-isomorphism between $\Ga$ and the kernel $K$ of the quasi-epimorphism $G \to \Gm$ obtained from \Cref{thm: quasi-iso}.
\end{theorem}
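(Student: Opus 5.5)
The plan is to obtain $G$ as an extension of $H$ by a one-dimensional kernel, identify that kernel with $\Ga$ using the triple $(a,b,c)$, and then use the circuit condition on $x',\tilde b,b$ to show that $H$ acts nontrivially on the kernel. That forces $H$ to be $\Gm$ and $G$ to have the stated shape.

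\textbf{Step 1: the extension.} First I would apply \Cref{thm: quasi-iso} to the group triples $(x,y,z)$ and $(x',y',z')$, using $x'\in\acl(x)$, $y'\in\acl(y)$, $z'\in\acl(z)$. This gives a quasi-epimorphism $\Psi\le G\times H$. Composing $\pi_H\restriction_\Psi$ with $\pi_G\restriction_\Psi$, which has finite kernel, gives, modulo a finite normal subgroup, a surjective definable homomorphism $G\to H/(\text{finite})$. Its kernel $K$ has dimension $2-1=1$. Let $K^0$ be its identity component, a connected one-dimensional normal subgroup of the connected group $G$.

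\textbf{Step 2: the kernel is $\Ga$.} Next I would show that $K^0$ is isogenous to $\Ga$ via $a$. Since $x\in\acl(x',a)$ and $a\in\acl(x)$, the element $a$ is a coordinate of $x$ along the fibres of $G\to H$. I would pass to the ``fibre'' group triple. Take $x_1,x_2$ independent realizations of $\tp(x/F)$ in the same fibre, that is, with $x_1'=x_2'$ after adding the parameter $x'$ to $F$. Then $x_1^{-1}x_2\in K$ (up to the finite indeterminacy) is generic in $K^0$ over the enlarged base. Using the two-of-three property exactly as in the proof of \Cref{thm: quasi-iso}, with $(a,b,c)$ the given $\Ga$-triple, the correspondence $x\mapsto a$ restricted to a fibre yields a group triple for $K^0$ that is interalgebraic with a $\Ga$-triple. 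A second application of \Cref{thm: quasi-iso}, now over $F(x')^{\mathrm{alg}}$, gives a quasi-isomorphism $K^0\to\Ga$. Since $K^0$ and this quasi-isomorphism are actually $F$-definable, the parameters can then be removed. This is the last claim of the theorem. In particular $G$ is connected and solvable, with normal subgroup $K^0\simeq\Ga$ (isogeny onto $\Ga$ forces $K^0\simeq \Ga$) and quotient isogenous to $H$.

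\textbf{Step 3: the action is nontrivial, so $H\simeq\Gm$.} The remaining point, which I expect to be the main obstacle, is to show that the conjugation action of $G/K^0$ on $K^0\simeq\Ga$ is nontrivial. Definable automorphisms of $\Ga$ fixing $0$ are $u\mapsto\lambda u^{p^r}$ (\Cref{lemma: Def hom}). Since $G/K^0$ is connected, $r=0$, so the action is a definable homomorphism $\chi\colon G/K^0\to\Gm$. Here the circuit $x',\tilde b,b$ is used: $b$ is the $\Ga$-part of the product, and it is not algebraic over $\tilde b$ alone but becomes algebraic once $x'$ is added. I would write $y=x\cdot g$ and compute the $\Ga$-coordinate of $y$ relative to that of $x$: it is the $\Ga$-coordinate of $g$ twisted by $\chi(x')$. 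If $\chi$ were trivial, $G$ would be (up to isogeny) a product in which $b\in\acl(\tilde b)$ by independence, contradicting the circuit condition. I would first try to make this precise by computing ranks of $(x',\tilde b,b)$ in a central extension, using \Cref{lemma: types} to control the generic type.

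\textbf{Step 4: conclusion.} Once $\chi$ is nontrivial, its image is all of $\Gm$, and $\chi$ has finite kernel. An elliptic curve admits no nonconstant map to the affine variety $\Gm$, and $H\ne\Ga$ by hypothesis, so $H$ is isogenous to $\Gm$. This gives the quasi-isomorphism $H\sim\Gm$. Finally, $G$ is a connected solvable linear group with unipotent radical $\Ga$ and a one-dimensional torus quotient. A maximal torus $T\simeq\Gm$ is a complement (\cite[Section 19]{Humphreys}), so $G\simeq\Gm\ltimes\Ga$, and the action is the character $s\mapsto s^k$ with $k\ne0$ by Step 3. \Cref{lemma: ext Aff} then shows that $G$ is quasi-isomorphic to $\Aff$ via $\tau$.
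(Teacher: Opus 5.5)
\textbf{Step 3 has a genuine gap.} Your proposal never uses the point $q$ (the circuits $b,q,y'$ and $\tilde b,q,z'$), and without that hypothesis the statement is false.

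Here is a counterexample. Let $G=L\times\Ga$ with $L=\Gm$, or even an elliptic curve. Take independent generics $x=(x_L,x_a)$, $y=(y_L,y_a)$, set $z=xy$, and put $x'=x_L$, $y'=y_L$, $z'=x_Ly_L$. Fix a nonconstant $F$-rational function $\nu$ on $L$ and set $a=x_a+\nu(x_L)$, $c=z_a$, $b=c-a=y_a-\nu(x_L)$, and $\tilde b=y_a$. Every other hypothesis holds; in particular $x',\tilde b,b$ is a circuit. Yet the extension is central, and $L$ need not be $\Gm$.

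Your heuristic that $b\in\acl(\tilde b)$ in a product fails for a specific reason. The correspondences $x\leftrightarrow a$ and $z\leftrightarrow c$ are additive only up to shifts depending on $\pi(x)$ and $\pi(z)$; in the paper these are the translates $n$, $m$ and the cocycle $\gamma$ of a rational section. What the central case actually gives, over $E=\overline{F(\pi(y))}$, is this: $y$, and hence $\tilde b$, is interalgebraic with $b+\eta$ for some $\eta\in\acl(\pi(x))$. So $(b,\eta,b+\eta)$ is a $\Ga$-triple, and the circuit condition is fully consistent with centrality.

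\textbf{The missing idea is the multiplicative relation supplied by $q$.} The points $x',y',z',\tilde b,b,q$ form a group configuration, and its triple $(x',y',z')$ is interalgebraic with the $L$-triple $(\pi(x),\pi(y),\pi(z))$. So \Cref{thm: group config} and \Cref{cor: quasi-iso config} give $\hat b,\hat{\tilde b}\in L$, interalgebraic with $b,\tilde b$ respectively, satisfying $\pi(x)\hat b=\hat{\tilde b}$. In the central case, the $\Ga$-triple $(b,\eta,b+\eta)$ and the $L$-triple $(\hat b,\pi(x),\pi(x)\hat b)$ are then interalgebraic, both of rank $2$ over $E$. By \Cref{thm: quasi-iso}, $L$ is isogenous to $\Ga$, contradicting the assumption on $H$.

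\textbf{Step 2 needs more than you sketch.} To compute $\eta$, you need kernel-coordinate bookkeeping. Since $b\notin\acl(y)$, the relation attached to $(y,b)$ must carry $\pi(x)$ as an extra parameter. You must also show that the fibrewise correspondences coming from $(x,a)$ and from $(z,c)$ are given by one and the same quasi-isomorphism $K\to\Ga$, up to shifts; that is what makes the $\eta$-computation possible.

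\textbf{Steps 1 and 4 are fine in outline.} Ruling out elliptic curves via the absence of nonconstant maps to an affine target, and then using the semidirect-product decomposition, matches the paper, but both depend on the nontriviality that Step 3 fails to establish. You should also justify that $G$ is affine (hence linear) once $L\simeq\Gm$, before invoking Humphreys.
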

\begin{remark}
    These configurations correspond to the semidirect product decomposition $\Aff \simeq \Gm \ltimes \Ga$ as follows: write elements of $\Aff$ as pairs $x=(x',a)$ with $x' \in \Gm$ and $a \in \Ga$. The ``additive coordinate'' of $y = x^{-1} z$ depends on the additive coordinates of both $x,z$, but on the multiplicative coordinate of $x$ only. Writing $z=(z',c)$, the additive coordinate $\tilde{b}$ of $y$ is in fact $\tilde b=x'^{-1}\cdot b$, where $b=c-a$, and this multiplicative relation is encoded by the additional group configuration (for which we have added the additional point $q$). The point of the theorem is that these relations are not given, and we want to prove that they hold. This will almost be accomplished: $G$ is only isogenous to $\Aff$ and not necessarily isomorphic; and we can say slightly less about the ``additive coordinate'' points $a,\tilde{b},c$. (In particular, it turns out they are not necessarily interalgebraic with ``the additive coordinates'' of $x,y,z$ with respect to a splitting of $G$.) The proof is slightly roundabout: we first prove that there is an exact sequence $1 \to \Ga \to G \to L \to 1$, then prove that this extension is not central. Semidirect products $\Gm \ltimes \Ga$ are then the only possibilities for $G$.

    The reader may ask why we do not apply the Field Configuration Theorem, instead. Roughly, that theorem states that a faithful action of a $2$-dimensional group on a $1$-dimensional set is essentially the action of $\Aff$ on $\Ga$, and certain configurations imply the existence of such an action (see \cite[Cor.~6.6]{Bays} for an exact statement). A main difficulty is to obtain faithfulness of the action, or at least finiteness of the kernel, purely in terms of conditions on transcendence degrees. The field configuration does not achieve this on its own: it has linear representations, which, by taking linear combinations of independent transcendentals, give rise to algebraic representations; in any such algebraic representation, the group of every group configuration is a power of $\Ga$, and $\Aff$ cannot appear. (In \cite[Thm.~6.1]{Bays}, for instance, the condition guaranteeing faithfulness is phrased in terms of canonical bases: in the notation there, it is the assumption that $\acl^{\mathrm{eq}}(a)=\overline{\Cb}(xy/a)$, and similarly for $bzy$ and $czx$.)
\end{remark}
\begin{proof}
    From \Cref{thm: quasi-iso} we obtain a quasi-epimorphism between $G$ and $H$. For convenience, identify this with a morphism $\pi:G \to L$ from $G$ onto a quotient $L$ of $H$ by a finite normal subgroup. We may assume $\pi$ is a morphism of algebraic varieties by replacing $L$ with a Frobenius twist if necessary: see \Cref{lemma: Def hom}.

    If $\ker(\pi)$ happens to be disconnected, we perform the following replacement process: the connected component of the identity in $\ker(\pi)$ is normal, since conjugation in $G$ induces an automorphism of the kernel and automorphisms preserve the connected component of the identity. Denote $K = \ker(\pi)^0$ and temporarily denote $\tilde{L} = G / K$. Observe that $\tilde{L}$ must be connected, as $\tilde{L}$ is a continuous image of $G$; and $G$ is connected by assumption. Note that $\tilde{L}$ has $L$ as a quotient by a finite normal subgroup (which we can identify with the group of components of $\ker(\pi)$), so $\tilde{L}$ is still quasi-isomorphic to $H$. Now replace $L$ by $\tilde{L}$, and $\pi$ by the quotient map $G \to G/K$.

    In short, we have a short exact sequence
    \[1 \to K \to G \overset{\pi}{\to} L \to 1\]
    of connected algebraic groups, and $L$ is quasi-isomorphic to $H$.

    Similarly to the proof of \Cref{thm: quasi-iso}, we obtain relations $\Rel 1$, $\Rel 2_s$, and $\Rel 3$, definable over $F$, and where $\Rel 2_s$ depends on an additional parameter $s$ with values in $L$.
    Explicitly, $\oRel 1 \subseteq G \times G_a$ is obtained by taking an $F$-definable affine open $U \subseteq G \times G_a$ that contains $(x,a)$. In it, consider the subset $C \subset U$ of all pairs satisfying the same equations as $(x,a)$ (i.e. $C=Z(I_F((x,a)))$ is the unique minimal $F$-definable Zariski-closed subset of $U$ containing $(x,a)$). Then take $\Rel 1$ to be the closure of $C$ within $G \times G_a$. The relation $\oRel 3 \subseteq G \times G_a$ is defined similarly. For $\oRel 2 \subseteq G \times L \times G_a$, perform the analogous process with an $F$-definable affine open of $G \times L \times G_a$ containing $(y, \pi(x), b)$. For $s \in L$, $\oRel 2_s$ is notation for $\{(u,c) \in G \times G_a \mid (u,s,c) \in \oRel 2\}$.

    \begin{claim}[``Two of three'']
    If any two of
    \[\begin{cases}
    u \Rel 1 d \\ 
    v \Rel 2_{\pi(u)} e \\
    (u\cdot_G v) \Rel 3 d +_{G_a} e
    \end{cases}\]
    hold then so does the third.
    \end{claim}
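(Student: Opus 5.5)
The plan follows the proof of \Cref{thm: quasi-iso}: first prove that the generic instance of each implication holds, then spread it to all tuples by Zariski-closedness.

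\textbf{Generic instance.} Take $(u,d)$ generic in $\oRel 1$ and, independently over $F$, $(v,s,e)$ generic in $\oRel 2$. By minimality of the defining ideals, $\tp(u,d/F)=\tp(x,a/F)$ and $\tp(v,s,e/F)=\tp(y,\pi(x),b/F)$. This alone does not give $s=\pi(u)$. So we work on the closed set
\[
S=\Set{(u,d,v,e)\mid u\Rel1 d,\ (v,\pi(u),e)\in\oRel 2}.
\]
We must show that a generic point of $S$ has the same type as $(x,a,y,b)$.

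Count ranks. The tuple $(x,a,y,b)$ has rank $2+2=4$: $x,y$ are independent of rank $2$, $a\in\acl(x)$, and $b\in\acl(y,x')$. The last holds because $(x',\tilde b,b)$ is a circuit and $\tilde b\in\acl(y)$. Now we expect $\dim S=4$. The fibre of $\oRel 1$ over $u$ is finite. Given $u$, the fibre of $\oRel 2$ over $\pi(u)$ has dimension $\RM(y,b/\pi(x))=2$, since $\pi(x)$ is interalgebraic with $x'$ and $b\in\acl(y,x')$.

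Irreducibility of the relevant component comes from stationarity over the algebraically closed $F$, as in \Cref{lemma: types}. Here we use that $y\indep_F x$, so $\tp(y,b/F x)$ is the nonforking extension of $\tp(y,b/F\pi(x))$. Hence a generic point of the unique top-dimensional component of $S$ has the type of $(x,a,y,b)$. At that point $(xy, a+b)=(z,c)$, which satisfies $\Rel 3$ because $c=a+b$ in the group triple $(a,b,c)$.

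\textbf{Spreading to all tuples.} Since $\oRel 3$ is closed and multiplication in $G$ and addition in $\Ga$ are continuous, the map $(u,d,v,e)\mapsto(uv,d+e)$ sends that component into $\oRel 3$. This gives ``$1+2\Rightarrow3$'' on it.

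\textbf{Other two implications.} These go the same way, using the maps $(u,d,w,f)\mapsto(u^{-1}w,\,f-d)$ and $(v,e,w,f)\mapsto(wv^{-1},\,f-e)$. In the second case the parameter $\pi(u)=\pi(w)\pi(v)^{-1}$ is itself a continuous function of the data. The generic-type identification is the same argument, using the circuits $\{x,y,z\}$ and the pairwise independence in the group triple.

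\textbf{Main obstacle.} The hard step is making sure the implication holds on all of $S$, not only on its generic component. Extra components can appear where the fibres of $\oRel 1$ or $\oRel 2$ jump in dimension. I would first try to avoid this by redefining $\oRel1,\oRel2,\oRel3$ as the Zariski closures of the $F$-orbits of $(x,a)$, $(y,\pi(x),b)$, $(z,c)$. These are irreducible closed sets of dimensions $2$, $2+?$ and $2$. Then I would show, by comparing dimensions and using that each projection to $G$ (or to $G\times L$) is dominant with generically finite fibres, that $S$ is irreducible up to sets that do not matter.

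If that fails, I would do as \Cref{thm: quasi-iso} does: translate by $F$-definable elements to normalize the relations at the identity, then use the group structure. Every point of $S$ is a product of two generic points, and the relation $\oRel 3$ is invariant under such translations. So the generic statement propagates by writing $u=u_1u_2$ with $u_1,u_2$ generic over the data.
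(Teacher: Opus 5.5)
\textbf{Verdict: genuine gap.} Your scheme is the paper's: prove the implication at a generic point via types, then spread it using closedness of the relations and continuity of the group operations. The paper writes this out only for the middle implication. There the domain $\oRel 1\times\oRel 3$ is a product of irreducible closed sets whose generic points are independent, so \Cref{lemma: types} applies verbatim and the generic locus is dense. You correctly notice that the first and third implications live on fibre products over $L$, such as your $S$, where the independence hypothesis of \Cref{lemma: types} fails. But your way around this does not work.

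\textbf{The stationarity step.} You argue that $\tp(y,b/Fx)$ is \emph{the} nonforking extension of $\tp(y,b/F\pi(x))$, hence $S$ has a unique top-dimensional component. Uniqueness of nonforking extensions requires stationarity, and $F(\pi(x))$ is not algebraically closed. The data $\tp(u,d/F)=\tp(x,a/F)$, $\tp(v,\pi(u),e/F)=\tp(y,\pi(x),b/F)$ and $(u,d)\indep_{F\pi(u)}(v,e)$ pin down $\tp(u,d,v,e/F)$ when $\tp(x,a/F\pi(x))$ or $\tp(y,b/F\pi(x))$ is stationary, but not in general. Suppose $F(x,a)$ and $F(\pi(x),y,b)$ share an element algebraic over $F(\pi(x))$ but not in $\dcl(F\pi(x))$. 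Replace $(x,a)$ by an $F(\pi(x))$-conjugate, independent of $(y,b)$, that moves this element. This gives a generic point of $S$ of a different type, so $S$ has a second $4$-dimensional component. Nothing you have shown forces $(uv,d+e)\in\oRel 3$ on that component, and nothing in your argument rules this situation out.

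\textbf{The target is stronger, and it is the ``main obstacle'' you leave open.}
\begin{itemize}
\item The middle implication holds on all of $\oRel 1\times\oRel 3$. So the closed irreducible $4$-dimensional set $\{(u,d,v,e) : (u,d)\in\oRel 1,\ (uv,d+e)\in\oRel 3\}$ lies in $S$.
\item Also $\dim S\le 4$, since every nonempty fibre of $\oRel 2\to L$ has dimension $2$.
\item Hence the first implication for \emph{all} tuples is equivalent to $S$ being irreducible. The third implication is likewise equivalent to irreducibility of its own fibre product.
\end{itemize}
Your fallbacks do not establish this:
\begin{itemize}
\item The relations already are the closures of the $F$-loci, and $\oRel 2$ has dimension $3$.
\item $\oRel 3$ is not invariant under translations.
\item In \Cref{thm: quasi-iso}, normalizing by $F$-definable translations uses the two-of-three property, already proved on a product domain; it does not produce it.
\end{itemize}

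\textbf{What is missing.} You need an argument from the specific hypotheses of the theorem that these fibre products are irreducible. For example, suppose $\oRel 2_s$ is irreducible for every $s$ over which $\oRel 1$ has points. Then for $(u,d)\in\oRel 1$, the set $\{(u^{-1}w,f-d) : (w,f)\in\oRel 3\}$ is a $2$-dimensional closed subset of $\oRel 2_{\pi(u)}$, so it equals $\oRel 2_{\pi(u)}$, and that is the first implication. The paper's text does not supply such an argument either; it calls the first and third implications ``entirely analogous'' to the second. So this step has to be proved, not borrowed.
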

    \begin{proof}
    In what follows we suppress $\cdot_G$ and $+_{G_a}$ from the notation and just write, for example, $uv$ or $d+e$.
    Observe that the statement holds generically. In detail:
    \begin{enumerate}
    \item If $u,v \in G$ are generic and independent over $F$, and $d,e \in G_a$ satisfy $u \Rel 1 d$ and $v \Rel 2_{\pi (u)} e$, then $uv \Rel 3 d + e$.
    \item If $u,w \in G$ are generic and independent over $F$, and $d,f \in G_a$ satisfy $u \Rel 1 d$ and $w \Rel 3 f$, then $u^{-1} w \Rel 2_{\pi(u)} f - d$.
    \item If $v,w \in G$ are generic and independent over $F$, and $e,f \in G_a$ satisfy $v \Rel 2_{\pi(w v^{-1})} e$ and $w \Rel 3 f$, then $w v^{-1} \Rel 1 f - e$.
\end{enumerate}
In all three cases, the result follows from \Cref{lemma: types}. Now each of the three desired implications is of the following general form: there are closed subsets $A,B$ of some variety, and a Zariski-continuous function $f: A \times B \to C$, where $C$ is a third variety; for a dense subset $U \subset A\times B$ we have $f(U) \subset C'$, where $C' $ is a closed subset of $C$. Under these conditions, continuity implies $f(\overline U) \subset \overline{f(U)} \subset \overline{C'} = C'$. 

Concretely, for the second implication, $A = \oRel 1$, $B = \oRel 3$, the dense subset $U \subset A \times B$ is the set of tuples $((u,d),(w,f)) \in A \times B$ such that $u,w$ are generic and independent over $F$, $C = G \times L \times G_a$, $f((u,d),(w,f)) = (u^{-1} w, \pi(u), f - d)$, and $C' = \oRel 2$. The two other implications are entirely analogous.
\end{proof}

\begin{claim}
    Each of $\oRel 1$ and $\oRel 3$ has dimension $2$. There exists a finite subset of $L$ such that for all $s$ outside this finite subset, $\dim \oRel 2_s = 2$.
\end{claim}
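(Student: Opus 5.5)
The plan is to compute each dimension from the rank of the generic point that defines the relation, and then use the two-of-three property to control the special fibres of $\oRel 2$.

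\emph{The relations $\oRel 1$ and $\oRel 3$.} By construction, $\oRel 1$ is the closure in $G\times\Ga$ of the locus $C=Z(I_F((x,a)))$. This locus is irreducible, and its generic point is $(x,a)$. Hence $\dim \oRel 1 = \dim C = \RM(x,a/F)$. Since $a \in \acl(x)$ and $\RM(x)=2$, this rank is $2$. The same argument with $(z,c)$ gives $\dim \oRel 3 = 2$.

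\emph{The generic fibres of $\oRel 2$.} The relation $\oRel 2$ is the closure of the locus of $(y,\pi(x),b)$. Since $b \in \acl(y)$, we get $\dim \oRel 2 = \RM(y,\pi(x),b)$.
\begin{itemize}
\item We have $\pi(x)$ interalgebraic with $x'$, which has rank $1$.
\item The elements $y$ and $x$ are independent, so $\RM(y,\pi(x),b) = 2+1 = 3$.
\item The projection of $\oRel 2$ to $L$ is dominant, since $\pi(x)$ is generic in the one-dimensional group $L$.
\end{itemize}
By fibre dimension (definability of Morley rank), the fibres $\oRel 2_s$ have dimension $3-1=2$ for $s$ in a cofinite subset of $L$. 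Directly: $\RM(y,b/\pi(x)) = 2$, so the fibre over the generic point has dimension $2$, and the set of $s$ whose fibre has dimension $2$ is $F$-definable and contains a generic of $L$, hence is cofinite. So the set $E$ of exceptional $s$, where $\dim\oRel 2_s \ne 2$, is finite.

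\emph{Main obstacle and fallback.} The step needing most care is the identification $\RM(y,b/\pi(x))=2$, which requires $\pi(x)\indep y$; this follows from the group triple hypothesis on $(x,y,z)$. A subtler worry is whether the problematic $s$ form exactly a finite set and not just a non-generic set, but in the one-dimensional $L$ these coincide. If one also wants the upper bound $\dim\oRel 2_s\le 2$ for every $s$, I would use the two-of-three claim. Fix $u$ with $\pi(u)=s$ and some $d$ with $u\Rel 1 d$; such $d$ exists since $\oRel 1\to G$ is dominant with closed image containing the generic point, so it is surjective up to closure. Then the map $(v,e)\mapsto(uv,d+e)$ embeds $\oRel 2_s$ into $\oRel 3$. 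This bounds $\dim\oRel 2_s\le 2$ for all $s$, so the only exceptions are fibres of smaller dimension, and these occur only over a finite set by the generic computation above.
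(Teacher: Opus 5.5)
Your strategy is the paper's: get $\dim\oRel 1=\dim\oRel 3=2$ from the rank of the generic points $(x,a)$ and $(z,c)$. Then show the fibre of $\oRel 2$ over the generic $s=\pi(x)$ has dimension $2$, and use definability of dimension on the one-dimensional $L$ to get cofinitely many good $s$.

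\textbf{The gap: $b\notin\acl(y)$.} The fact you rely on for the rank computation, $b\in\acl(y)$, is false. We have $\acl(y)=\acl(y',\tilde b)$, and $\{b,\tilde b,y'\}$ is a non-collinear triple of the group configuration $x',y',z',\tilde b,b,q$, hence independent of rank $3$. So $b\notin\acl(y)$. In $\Aff$ terms, $\tilde b=x'^{-1}b$: you recover $b$ from $y$ only after scaling by $x'$. This is precisely why $\oRel 2$ carries the extra parameter $s\in L$.

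\textbf{The fix.} The correct input, which is what the paper uses, is $b\in\acl(y,\pi(x))$. This holds because $b\in\acl(x',\tilde b)$ (as $x',\tilde b,b$ is a circuit), $\tilde b\in\acl(y)$, and $x'$ is interalgebraic with $\pi(x)$. With this, $\RM(y,b/F\pi(x))=\RM(y/F\pi(x))=2$ using $y\indep\pi(x)$, and the rest of your argument goes through unchanged. Note that the equality $\dim\oRel 2=\RM(y,\pi(x),b)$ needs no algebraicity at all, since $\oRel 2$ is the closure of the locus of that tuple. In your ``main obstacle'' paragraph, the step you should have flagged is this algebraicity, not only the independence $\pi(x)\indep y$.

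\textbf{The fallback.} Your fallback paragraph is not needed for the claim, and as written it overreaches. The projection $G\times\Ga\to G$ is not closed, so the image of $\oRel 1$ in $G$ is only a dense constructible set. You therefore cannot pick $(u,d)\in\oRel 1$ with $\pi(u)=s$ for every $s$. The embedding $(v,e)\mapsto(uv,d+e)$ of $\oRel 2_s$ into $\oRel 3$ is valid only for the cofinitely many $s$ over which $\oRel 1$ has points. That is exactly how the paper uses this map in the next claim, after restricting to an open set.
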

\begin{proof}
For $\oRel 1$ and $\oRel 3$ the projection to the $G$-coordinate is dominant and has finite fibers, so the claim follows.

For $\oRel 2$, first note that for $s \in L$ generic over $F$ we have $\dim(\oRel 2_s)=2$. This is because it holds for the generic element $s = \pi(x)$: we have $\RM(y,b/Fs)=\RM(y/Fs)=2$, since $y,s$ are independent over $F$ and $b$ is algebraic over $y,s$ by assumption ($x',\tilde b, b$ is a circuit, and $\tilde b$ algebraic over $y$). 

Now let $p$ be the projection of $\oRel 2 \subset G \times L \times G_a$ to $L$. This is a morphism of algebraic varieties. The subset $\{t \in L \mid \dim(p^{-1}(t)) = 2\}$ is constructible and contains the generic element $s$, so it contains an open $V \subseteq L$; the complement of $V$ is finite since $L$ is irreducible of dimension $1$.
\end{proof}

\begin{claim}\label[claim]{claim:U in L x L}
    There exists a dense open subset $U$ of $L \times L$, definable over $F$, such that for all $(s,t) \in U$ the cosets of $K$ given by
    \[C_1 = \pi^{-1}(s),\ C_2 = \pi^{-1}(t), \text{ and } C_3 = \pi^{-1}(st)\]
    satisfy 
    \[\dim(\oRel 1 \cap (C_1 \times G_a)) = \dim(\oRel 2_{s} \cap (C_2 \times G_a)) = \dim(\oRel 3 \cap (C_3 \times G_a)) = 1.\]
\end{claim}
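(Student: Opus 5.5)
We establish the dimension statement at a generic point of $L\times L$, then spread it to a dense open set using definability of Morley rank, \cite[6.2.20]{Marker}.

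Setup. Consider the three incidence varieties
\[
E_1=\{(s,u,d)\in L\times G\times G_a : \pi(u)=s,\ u \Rel 1 d\},\qquad
E_3=\{(r,u,d) : \pi(u)=r,\ u\Rel 3 d\},
\]
\[
E_2=\{(s,t,v,e)\in L\times L\times G\times G_a : \pi(v)=t,\ (v,s,e)\in\oRel 2\}.
\]
All three are $F$-definable. By definability of rank, for each $i$ the set of parameters whose fibre has dimension exactly $1$ is $F$-constructible. Pulling these back along $(s,t)\mapsto s$, $(s,t)\mapsto (s,t)$ and $(s,t)\mapsto st$, and intersecting, gives an $F$-constructible subset $W\subseteq L\times L$. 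It therefore suffices to show that a generic pair $(s,t)$ over $F$ lies in $W$: a constructible set containing a generic point of the irreducible $L\times L$ contains a dense $F$-definable open set $U$.

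Fibres over a single parameter. For $\oRel 1$, the projection $\oRel 1\to G$ has finite fibres and $\dim\oRel 1=2$. Hence $\oRel 1\to L$, via $(u,d)\mapsto\pi(u)$, is dominant (because $\pi(x)$ is generic) with generic fibre of dimension $2-1=1$. Each nonempty fibre $\oRel 1\cap(C_1\times G_a)$ is contained in the preimage of the one-dimensional coset $C_1$ under a finite-to-one map, so it has dimension at most $1$. By the fibre dimension theorem it has dimension exactly $1$ for $s$ generic. The same argument applies to $\oRel 3$ with $r=st$; this is generic in $L$ whenever $(s,t)$ is generic, since $s,t$ are independent.

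For $\oRel 2$, first fix $s$ outside the finite exceptional set of the previous claim, and also generic. Then $\dim\oRel 2_s=2$, and the projection $\oRel 2_s\to G$ has finite fibres over generic points. This holds because $b\in\acl(y,\pi(x))$, so the fibre over $y$ in $\oRel 2_{\pi(x)}$ is finite, and this transfers to generic $s$ by an automorphism over $F$. Composing with $\pi$ gives a map $\oRel 2_s\to L$. Its image contains $\pi(v)$ for generic $v$, so a $t$ generic over $Fs$ has a fibre of dimension $1$.

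The main obstacle is exactly this point: the fibres of $\oRel 2_s\to G$ are finite only generically, and may be larger over a proper closed subset. I would handle it by intersecting with the open locus of $G$ over which the fibres are finite. The fibre of that locus over a coset $C_2$ with $t$ generic is dense in $C_2$, and this yields dimension exactly $1$ rather than $2$. The fibre cannot be $2$-dimensional for generic $t$ anyway, because $\dim\oRel 2_s=2$ and the map to $L$ is dominant.

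Conclusion. Since $(s,t)$ is generic in $L\times L$ and each of the three conditions holds at it, $(s,t)\in W$. This gives the dense open $U$ defined over $F$ (it is $F$-definable as the interior of an $F$-constructible set).
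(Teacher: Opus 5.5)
Your proof is correct. For $\oRel 1$ and $\oRel 2$ you follow the paper's strategy: compute the fibre dimension at a generic point, then use constructibility of the locus of $1$-dimensional fibres to get a dense $F$-definable open set. The difference is in how you handle $\oRel 3$.

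The paper never computes fibres of $\oRel 3$. Once the conditions on $\oRel 1$ over $s$ and on $\oRel 2_s$ over $t$ hold, it fixes $(u,d)\in\oRel 1\cap(C_1\times\Ga)$. The two-of-three property then shows that translation by $(u,d)$ is a bijection from $\oRel 2_s\cap(C_2\times\Ga)$ onto $\oRel 3\cap(C_3\times\Ga)$. So the $\oRel 3$ condition holds at every point of the set, not just generically. The translation structure exhibited here is also what the next claim uses.

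You instead observe that $st$ is generic in $L$ whenever $(s,t)$ is generic in $L\times L$. You then rerun the $\oRel 1$ argument for $\oRel 3$ and pull back along multiplication. This is more symmetric and does not use the two-of-three property at all.

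Two small remarks. First, your assertion that $\oRel 1\to G$ has finite fibres everywhere is unnecessary, and it would need an argument (e.g.\ via two-of-three). The fibre dimension theorem at generic $s$ already gives dimension exactly $1$. Second, your ``main obstacle'' for $\oRel 2$ is real: $\oRel 2_s$ need not be irreducible, and the paper's ``in the same way'' glosses over this. But it disappears if you apply the fibre dimension theorem directly to the dominant morphism $\oRel 2\to L\times L$, $(v,s,e)\mapsto(s,\pi(v))$. Here $\oRel 2$ is irreducible of dimension $3$, the generic point $(y,\pi(x),b)$ maps to the generic pair $(\pi(x),\pi(y))$, and the fibre over $(s,t)$ is exactly $\oRel 2_s\cap(C_2\times\Ga)$. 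So generic fibres have pure dimension $1$.
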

\begin{proof}
Consider the projection from $\oRel 1 \subset G \times \Ga$ to $L$ (that is, the composition of $\pi$ on the projection $\oRel 1 \to G$). This is a projection of irreducible varieties, so over a nonempty open subset $V$ of $L$ the fibers have dimension $\dim \oRel 1 - \dim L = 1$. Intersecting $V$ with the open subset of $L$ of the previous claim, we also obtain that $\dim \oRel 2_s = 2$ for all $s \in V$. In the same way, given $s \in V$, the set of elements $t \in L$ such that $\dim(\oRel 2_{s} \cap (\pi^{-1}(t) \times G_a)) = 1$ is a nonempty open. So the constructible set 
\[U = \{(s,t)\in L \times L \mid s \in V, \dim(\oRel 2_{s} \cap (\pi^{-1}(t) \times G_a)) = 1\}\]
has Morley rank $2$, and hence contains a nonempty open subset of $L \times L$.

Now let $(s,t) \in U$ and denote $C_1 = \pi^{-1}(s)$, $C_2 = \pi^{-1}(t)$, $C_3 = \pi^{-1}(st)$. Fix $(u,d) \in \oRel 1 \cap (C_1 \times \Ga)$, and observe that the ``two of three'' property implies that multiplication by $(u,d)$ induces a bijection between $\oRel 2_s \cap (C_2 \times \Ga)$ and $\oRel 3 \cap (C_3 \times \Ga)$. Hence $\dim \oRel 3 \cap (C_3 \times \Ga) = \dim \oRel 2_s \cap (C_2 \times \Ga) = 1$.
\end{proof}

\begin{claim}
    Let $(s,t) \in U$, where $U \subset L \times L$ is as in the previous claim. Fix $u\in \pi^{-1}(s)$ and $v \in \pi^{-1}(t)$, and choose $d,e \in G_a$ such that $u \Rel 1 d$ and $v \Rel 2_{s} e$. Then the relations
    $\oRel 1',\oRel 2', \oRel 3' \subset K \times G_a$ given by:
    \begin{align}
    k \Rel 1' m & \iff  k\cdot u \Rel 1 m + d\\
    k \Rel 2' m &\iff  u^{-1} k u v \Rel 2_s m + e\\ 
    k \Rel 3' m &\iff k uv \Rel 3 m + (d+e).
    \end{align}
    are all equal to each other, and are a quasi-isomorphism.
\end{claim}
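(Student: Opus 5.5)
The plan is to mimic the argument in the proof of \Cref{thm: quasi-iso}. First I establish a two-of-three property for the primed relations, then show each contains the identity, and finally conclude that the three relations coincide and form a subgroup of $K\times\Ga$ whose projections have finite kernel.

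\emph{Step 1: two-of-three.} Let $k,k'\in K$ and $m,m'\in\Ga$. I claim that any two of
\[k\Rel1' m,\qquad k'\Rel2' m',\qquad kk'\Rel3' m+m'\]
imply the third. It is cleaner to work with the conjugated second relation. Note that $u^{-1}k'uv$ lies in $\pi^{-1}(t)=C_2$, since $K$ is normal. Put $\tilde u = ku\in C_1$ and $\tilde v = u^{-1}k'uv\in C_2$. Then $\pi(\tilde u)=s$ and $\tilde u\tilde v = kk'uv\in C_3$. Now apply the ``two of three'' claim to $(\tilde u, m+d)$ and $(\tilde v, m'+e)$, using that $\oRel2$ is read at the parameter $\pi(\tilde u)=s$. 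Since $(m+d)+(m'+e)=(m+m')+(d+e)$, this is exactly the required statement.

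\emph{Step 2: identities.} I check that $e_K\Rel i' 0$ for each $i$. For $i=1,2$ this is the choice of $d$ and $e$. For $i=3$ it follows from $u\Rel1 d$, $v\Rel2_s e$ and the two-of-three claim, which give $uv\Rel3 d+e$. Then Step~1 with $k'=e_K$, $m'=0$ gives $\oRel1'\subseteq\oRel3'$. With $k=e_K$, $m=0$ it gives $\oRel2'\subseteq\oRel3'$. The reverse inclusions come from the other directions of two-of-three in the same way. So all three relations are equal; call the common relation $\Phi$. Two-of-three applied to $\Phi=\oRel1'=\oRel2'=\oRel3'$ then shows $\Phi$ is closed under products. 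It is also closed under inverses: from $k\Rel1' m$ and $e_K\Rel3' 0$ we get $k^{-1}\Rel2' -m$. Hence $\Phi\le K\times\Ga$ is a closed (hence definable) subgroup.

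\emph{Step 3: quasi-isomorphism.} This is the step I expect to be the main obstacle. $\Phi$ is the translate $(\oRel1\cap(C_1\times\Ga))\cdot(u,d)^{-1}$, so by \Cref{claim:U in L x L} it has dimension $1$, as does $K$ (of dimension $\dim G-\dim L=1$).
\begin{itemize}
\item The projection $\Phi\to K$ has finite kernel. Its fibre over $e_K$ is $\{m : u\Rel1 m+d\}$, and this is finite because $\oRel1\to G$ has finite fibres. That finiteness holds on the closed set, not just generically, since $\oRel1$ is the closure of the locus of $(x,a)$ with $a$ algebraic over $x$; if the closure acquired an infinite fibre I would instead argue via the subgroup structure below.
\item The projection $\Phi\to\Ga$ has finite kernel. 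A subgroup of $K\times\Ga$ of dimension $1$ whose projection to $K$ has finite kernel must map onto $K$, since $K$ is connected. The kernel of $\Phi\to\Ga$ is $\Phi\cap(K\times 0)$. If it were infinite it would contain $K$, so $K\times0\subseteq\Phi$. Then $\Phi^0=K\times 0$ by dimension, and $\oRel1\cap(C_1\times\Ga)$ would be a finite union of sets $C_1\times\{m\}$. I would rule this out by genericity. For $s=\pi(x)$ generic, this would say that $a$ is algebraic over $F(\pi(x))$, i.e. over $x'$. That contradicts $x\in\acl(x',a)$ together with $\RM(x)=2$ and $\RM(x')=1$. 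To pass from generic $(s,t)$ to arbitrary $(s,t)\in U$, I would use that the isomorphism type of the relation is constant on an open set, after possibly shrinking $U$.
\item Both projections are surjective: $\Ga$ is connected and the image of $\Phi$ in $\Ga$ is a closed subgroup of dimension $1$.
\end{itemize}
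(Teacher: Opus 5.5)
Your Steps 1 and 2 are the paper's argument. The paper checks the two-of-three property for the primed relations and that $e_K \Rel1' 0$, $e_K \Rel2' 0$, $e_K \Rel3' 0$. It then says that, as in \Cref{thm: quasi-iso}, the three relations coincide and form a quasi-isomorphism. Your reduction via $\tilde u = ku$, $\tilde v = u^{-1}k'uv$ is exactly the needed verification.

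\textbf{The gap is in Step 3.} The paper is terse there, so you have to supply the argument, and as written it does not cover the claim.
\begin{itemize}
\item Your genericity arguments only treat $s$ generic over $F$, such as $s=\pi(x)$. You reach the rest of $U$ by ``shrinking $U$'', which proves a weaker statement. A property that is $F$-definable and holds at the generic point of $L$ can still fail at finitely many $F$-rational points of $L$. Nothing prevents $U$ from containing pairs $(s,t)$ with such an $s$.
\item The first bullet's justification is not valid. The closure of a generically finite correspondence can acquire whole fibres $\{g\}\times\Ga$ over finitely many points; compare the closure of the graph of $(g_1,g_2)\mapsto g_1/g_2$. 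So finiteness of $\{m : u \Rel1 m+d\}$ for an arbitrary $u$ does not follow from the way $\oRel1$ was defined.
\end{itemize}

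\textbf{The fix is to re-base at a generic point.} This uses the two-of-three property, which the paper establishes for all tuples, not only generic ones.
\begin{itemize}
\item Given $u,d,v,e$, choose $(u_1,d_1)\in\oRel1$ generic over $F(u,v,d,e)$.
\item Since $uv \Rel3 d+e$, two-of-three gives $v_1 \Rel2_{\pi(u_1)} e_1$, where $v_1 = u_1^{-1}uv$ and $e_1 = d+e-d_1$.
\item Your Steps 1 and 2 never used $(s,t)\in U$, so they apply to the data $(u_1,d_1,v_1,e_1)$. The third relation they produce is $k u_1 v_1 \Rel3 m + d_1 + e_1$, i.e.\ $kuv \Rel3 m + d+e$, which is literally your $\oRel3'$.
\item Hence your $\Phi$ equals $\{(k,m) : k u_1 \Rel1 m + d_1\}$, a relation based at a generic point of $\oRel1$.
\end{itemize}

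For this relation both of your bullets go through as intended.
\begin{itemize}
\item The kernel of $\Phi\to K$ is a translate of the fibre of $\oRel1$ over the generic $u_1$, hence finite.
\item Suppose $K\times\{0\}\subseteq\Phi$. Then $d_1\in\acl(F,\pi(u_1))$. Now $(u_1,d_1)$ realizes $\tp(x,a/F)$, and $x\in\acl(x',a)$ with $x'$ interalgebraic with $\pi(x)$. So $u_1\in\acl(F,\pi(u_1))$, contradicting $\RM(u_1/F)=2$.
\end{itemize}
Together with $\dim\Phi = 1$, this gives surjectivity of both projections. It proves the claim for every $(s,t)\in U$, with no shrinking.
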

\begin{remark}
    The ``two of three'' property here means that, for instance, if $k_1 \Rel 1' m_1$ and $k_2 \Rel 2' m_2$ then $k_1 k_2 \Rel 3' m_1 + m_2$, and similarly in the other two cases. (It does not mean that if $k \Rel 1' m$ and $k \Rel 2' m$ then $k \Rel 3' m$ for the same $k,m$.)
\end{remark}
\begin{proof}
We apply the same kind of translation argument as in \Cref{thm: quasi-iso}. A straightforward verification shows that these relations satisfy the ``two of three'' property, and that for $e_K$ the neutral element of $K$ and $0 \in G_a$ we have $e_K \Rel 1' 0$, $e_K \Rel 2' 0$, and $e_K \Rel 3' 0$. As in \Cref{thm: quasi-iso} it follows that $\oRel 1' = \oRel 2' = \oRel 3'$ is a quasi-isomorphism between $K$ and $G_a$.
\end{proof}

\begin{corollary}
\label[corollary]{cor: Upsilon}
    There is a quasi-isomorphism $\Upsilon$ between $K$ and $\Ga$, definable over $F$, and a dense $F$-definable open subset $V \subset L$, such that the following hold:
    \begin{enumerate}
        \item If $g \in \pi^{-1}(V)$ then $\dim (\oRel 1 \cap (Kg \times \Ga)) = \dim(\oRel 3 \cap (Kg \times \Ga)) = 1$.
        \item If $g \in \pi^{-1}(V)$ and $g \Rel 1 d$ then
        $k \mathrel{\Upsilon} m \iff kg \Rel 1 m+d.$
        \item If $g \in \pi^{-1}(V)$ and $g \Rel 3 f$ then
        $k \mathrel{\Upsilon} m \iff kg \Rel 3 m+f.$
    \end{enumerate}
\end{corollary}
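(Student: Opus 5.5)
The corollary follows from the previous claim once we see that the quasi-isomorphism it produces does not depend on the chosen data $(s,t,u,v,d,e)$. I will take $\Upsilon$ to be $\oRel 1'$ built from one fixed choice, and then show that every other admissible choice gives the same relation.

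First I choose $V$. Let $V\subseteq L$ be the $F$-definable dense open set of $s$ for which there is some $t$ with $(s,t)\in U$, and for which also $(t,s)\in U$ for some $t$. Since $U$ is open and dense in $L\times L$, both projections of $U$ contain nonempty opens, and I shrink $V$ to their intersection. Then item (1) is immediate from \Cref{claim:U in L x L}. For $g\in\pi^{-1}(V)$ I pick $t$ with $(\pi(g),t)\in U$ and apply it with $C_1=Kg$, which gives $\dim(\oRel 1\cap (Kg\times\Ga))=1$. For $\oRel 3$ I write $\pi(g)=s't$ with $(s',t)\in U$. This is possible for $s$ in a dense open set, since the map $U\to L$, $(s',t)\mapsto s't$, is dominant. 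So I intersect $V$ with its image as well.

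Next I prove independence of choices. Fix $g\in\pi^{-1}(V)$ with $g\Rel 1 d$, and choose $t,v,e$ so that the claim applies with $u=g$. The claim then says that $k\mapsto$ (the relation $kg\Rel 1 m+d$) equals $\oRel 1'$, and also equals $\oRel 3'$, which is computed from $uv$ and $d+e$. So it suffices to show that $\oRel 1'$ for the base point $(g,d)$ agrees with $\oRel 1'$ for any other base point $(g_0,d_0)$. I do this by connecting them through $\oRel 3'$ or $\oRel 2'$ with common data. Take $(g_0,d_0)$ and $(g,d)$, and choose $v$ so that $g_0v$ and $gv'$ coincide for suitable $v,v'$. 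The two $\oRel 3'$ relations then have the same base point $w=g_0v=gv'$ and the same value $f$, namely $d_0+e=d+e'$, once I show the values agree.

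This last point is the main obstacle. Given $w\Rel 3 f$ and $w\Rel 3 f'$ with $w$ fixed, the relation $k\Rel 3' m$ ($kw\Rel 3 m+f$) and its analogue with $f'$ differ by the translation $m\mapsto m+(f-f')$. Both are subgroups, because each equals a quasi-isomorphism by the claim. A subgroup and a translate of it by $(e_K,f-f')$ can both be subgroups only if $f-f'$ lies in the finite group $\{m : e_K\mathrel{\Upsilon} m\}$. So the relations agree exactly, since a coset of a subgroup containing the identity is the subgroup itself.

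Thus all base points in $\pi^{-1}(V)$ give one common relation $\Upsilon$, which proves (2). Item (3) follows by the same translation argument applied to $\oRel 3$ with base point $g$ and $f$, using that $\oRel 3'=\oRel 1'$. Finally, $\Upsilon$ is $F$-definable, since it is invariant under $\Aut(\mathbb K/F)$: such automorphisms permute the admissible base points and fix $\oRel 1$, $\oRel 3$ and $V$.
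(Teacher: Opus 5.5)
Your argument is correct and rests on the same mechanism as the paper's: the previous claim identifies $\oRel 1'$ with $\oRel 3'$, and base points are linked through a shared product point. The paper instead runs a two-step chain from fixed $F$-definable base points, so $F$-definability is automatic. One $g_0$ over $s_0$ defines $\Upsilon$; this propagates to every $\oRel 3$-base point over $\{s_0t : (s_0,t)\in U\}$; one $F$-definable $\oRel 3$-base point over $u_0$ then propagates back to every $\oRel 1$-base point over $\{u_0t^{-1} : t\in W\}$. You compare arbitrary pairs of base points and get definability from $\Aut(\mathbb K/F)$-invariance.

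Your coset argument is valid but unnecessary. Take $e'=(d_0+e)-d$; two-of-three gives $g^{-1}w\Rel 2_{\pi(g)} e'$, since $g\Rel 1 d$ and $w\Rel 3 d_0+e$. With this choice the two values are equal outright, as in the paper.

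Two steps should be spelled out.
\begin{enumerate}
\item Existence of the common point. Write $U_s=\{t : (s,t)\in U\}$. Then $\pi(g_0)U_{\pi(g_0)}\cap\pi(g)U_{\pi(g)}$ is a nonempty open subset of the irreducible curve $L$. Choose $w$ over it with some $w\Rel 3 f$, and let two-of-three supply $e$ and $e'$.
\item Item (3). The translation argument needs $\{(k,m) : kg\Rel 3 m+f\}$ to be a subgroup for the given $f$. This holds because it is the claim's $\oRel 3'$ for the data $(u,u^{-1}g,d,f-d)$, where $u\Rel 1 d$ and $u$ lies over $s'$. At that point it equals $\Upsilon$ directly, so no translation is needed. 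Note that $s'$ lies in the first projection of $U$ but possibly not in your $V$. This is harmless, since your pairwise argument works over that whole projection.
\end{enumerate}
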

\begin{remark} This essentially means that the relations $\oRel 1'$ and $\oRel 3'$ of the previous claim do not depend on the choice of $s,t,u,v,d,e$. For $\oRel 2'$, the additional parameter makes any analogous statement somewhat tricky; but we do not need it.
\end{remark}
\begin{proof}
Let $U \subset L \times L$ be the open set of \Cref{claim:U in L x L}. Since $L$ is irreducible, so is $L \times L$, and therefore $U$ is dense in $L \times L$. So there exists an $F$-definable $s_0\in L$ in the projection of $U$ to the first coordinate. Fix an $F$-definable $g_0 \in \pi^{-1}(s_0)$, as well as an $F$-definable $d_0 \in G_a$ such that $g_0 \Rel 1 d$. Define $\Upsilon$ by 
\[k \Upsilon m \iff kg_0 \Rel 1 m+d_0.\]

Note that $\{t' \in L \mid (s_0,t')\in U\}$ is open in $L$. For any such $t'$ and any $h \in \pi^{-1}(t')$, let $f \in \Ga$ be an element satisfying $g_0h \Rel 3 f$. Then the relation $\Rel 3'$ defined by
\[ k \Rel 3' m \iff kg_0h \Rel 3 m + f \]
equals $\Upsilon$: by the ``two of three'' property we have that $h \Rel 2_{s_0} f - d_0$, and the previous claim applies. This shows that if $u$ is any element of
\[V'' \coloneqq \{u \in L \mid u=s_0 t', (s_0,t)\in U\}\]
satisfies that if $g\in \pi^{-1}(u)$ and $g \Rel 3 f$ then $k \mathrel\Upsilon m \iff kg \Rel 3 m + f$. Observe that $V''$ is an $F$-definable open subset of $L$.

Now fix an $F$-definable $u_0 \in V''$, $g_0 \in \pi^{-1}(u_0)$, and $f_0 \in \Ga$ such that $g_0 \Rel 3 f_0$. By the above argument we have $k \mathrel\Upsilon m \iff kg_0 \Rel 3 m+f_0$. The set
\[W = \{t \in L \mid (u_0 t^{-1}, t) \in U\}\]
is Zariski open over $F$: it is the preimage of $U$ under the morphism $L \to L \times L$ given by $t \mapsto (u_0 t^{-1}, t)$. Let $t \in W$ and let $s = u_0 t^{-1}$. Similarly to the above, we have that if $g \in \pi^{-1}(s)$ and $g \Rel 1 d$ then $k \mathrel\Upsilon m \iff kg \Rel 1 m + d$, for the same $\Upsilon$ as defined earlier.

Finally, define $V = V'' \cap \{u_0 t^{-1} \mid t \in W\}$.
\end{proof}

\begin{claim}
    The extension $1 \to K \to G \to L \to 1$ is not central.
\end{claim}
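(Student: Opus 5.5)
The plan is to argue by contradiction. First I would show that a central extension here forces $G$ to be commutative. Then I would show that commutativity makes $b$ algebraic over $y$ alone, which is incompatible with the rank conditions of the configuration.

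\emph{Step 1: centrality implies $G$ is commutative.} Suppose $K \le Z(G)$. Then the commutator $(g,h)\mapsto [g,h]$ depends only on $(\pi(g),\pi(h))$, so it descends to a morphism $L\times L \to K$ (after a Frobenius twist if needed, \Cref{lemma: Def hom}). For a central extension this map is multiplicative in each variable, and it is trivial because $L$ is commutative, being one-dimensional. Each partial map $L \to K$ is then a homomorphism from $L$ to $K$. Here $K$ is quasi-isomorphic to $\Ga$ by the quasi-isomorphism $\Upsilon$ of \Cref{cor: Upsilon}, and $L$ is quasi-isomorphic to $H$, which is not of exponent $p$, so $L$ is $\Gm$ or an elliptic curve. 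A connected $L$ of this kind has no nontrivial homomorphism to a finite extension of $\Ga$: the image would be a one-dimensional connected quotient of $L$ of exponent $p$. Hence all commutators vanish and $G$ is commutative.

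\emph{Step 2: the additive labels become independent of the parameter $s$.} With $G$ commutative, I would use the ``two of three'' claim together with \Cref{cor: Upsilon}. The aim is to show that, for generic $v$, the set $\{e \mid v \Rel2_s e\}$ is a finite union of translates that does not depend on $s$ for $s$ in a dense open subset of $L$. Concretely, take $u_1,u_2$ generic and independent of $v$, with $u_i \Rel1 d_i$. Using $u_2 = k\,u_1\,w$ with $k\in K$, commutativity lets me rewrite $u_2v$ as $(k u_1)(vw)$, or as $(u_1 v)\,k\,w$. Comparing both expressions through $\Rel3$ and $\Upsilon$, I would show that the difference $(f_2-d_2)-(f_1-d_1)$ is controlled by $\Upsilon$ and an $F$-definable shift. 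This should give that $\oRel2_{s}$ and $\oRel2_{s'}$ agree up to finitely many translates, so $\oRel2_s$ is $\bumpeq$-equivalent to an $F$-definable relation $\oRel2^\ast\subseteq G\times\Ga$ with finite fibres over $G$.

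\emph{Step 3: contradiction with the configuration.} Step 2 gives $b \in \acl(F,y)$. But $\tilde b\in\acl(y)$, and $b,\tilde b$ are independent, since $x',\tilde b, b$ is a circuit. As $\RM(y)=2$, this gives $\acl(y)=\acl(b,\tilde b)\ni x'$. However $x' \in \acl(x)$, and $x,y$ are independent, which is a contradiction. So the extension is not central.

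The main obstacle is Step 2: extracting from the parameterized relation $\oRel2_s$ a relation that no longer depends on $s$. The delicate part is tracking the finite indeterminacies and the open sets on which the ``two of three'' property and \Cref{cor: Upsilon} apply. If the direct comparison turns out too messy, my fallback is to apply \Cref{thm: quasi-iso} directly to the commutative group $G$: build a group triple for $K\times\Ga$ out of $(y,b)$ and show that $b$ is then interalgebraic with a function of $y$ alone.
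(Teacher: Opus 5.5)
Step~2 fails, and it cannot be repaired with the tools you list. Your argument never uses the auxiliary point $q$ (the circuits $b,q,y'$ and $\tilde b,q,z'$), and without it the conclusion $b\in\acl(F,y)$ is false.

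Here is a counterexample. Take $G=\Gm\times\Ga$ and independent generics $s,t\in\Gm$, $\xi,\eta\in\Ga$. Set $x=(s,\xi)$, $y=(t,\eta)$, $z=(st,\xi+\eta)$, $x'=s$, $y'=t$, $z'=st$, $a=\xi+s$, $c=\xi+\eta$, $b=c-a=\eta-s$, and $\tilde b=\eta$. Then:
\begin{itemize}
\item $(a,b,c)$ is a $\Ga$-triple and $x',\tilde b,b$ is a circuit;
\item $a,\tilde b,c$ are algebraic over $x,y,z$, and $x,y,z$ are algebraic over $(x',a)$, $(y',\tilde b)$, $(z',c)$;
\item $K=\{1\}\times\Ga$ and the extension is central, indeed $G$ is commutative, so commutativity alone yields no contradiction;
\item the two-of-three property and \Cref{cor: Upsilon} hold, with $\Upsilon$ the identity.
\end{itemize}
Yet $\oRel 2_s=\{((t',\eta'),\eta'-s)\mid t'\in\Gm,\ \eta'\in\Ga\}$ moves by the translation $-s$ as $s$ varies, and $b=\eta-s\notin\acl(F,y)$. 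The underlying reason is that two-of-three and \Cref{cor: Upsilon} only control $\oRel 1$ and $\oRel 3$ inside a single coset of $K$. The shifts relating different cosets, as in your comparison of $u_1v$ with $u_1vkw$, depend on $\pi(u_1)$ and $\pi(w)$ and are not $F$-definable.

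What centrality actually gives is weaker. Fix an $F$-definable rational section $\alpha$ and write $y=k\,\alpha(\pi(y))$. Over $E=\overline{F(\pi(y))}$, the element $k$ is interalgebraic with $y$, hence with $\tilde b$, and it satisfies $k\mathrel\Upsilon b+\eta$ with $\eta$ algebraic over $E(\pi(x))$. So the line $x',\tilde b,b$ carries the $\Ga$-triple $(b,\eta,b+\eta)$ over $E$, just as $(-s,\eta,\eta-s)$ does in the example.

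The contradiction therefore has to come from $q$. Since $x',y',z',\tilde b,b,q$ is a group configuration, \Cref{thm: group config} and \Cref{cor: quasi-iso config} give $\hat b\in L$ interalgebraic with $b$ such that $\pi(x)\hat b$ is interalgebraic with $\tilde b$. So the same line also carries the $L$-triple $(\hat b,\pi(x),\pi(x)\hat b)$ over $E$, and \Cref{thm: quasi-iso} would make $L$ quasi-isomorphic to $\Ga$. This is the paper's argument, carried out via the rational section and its cocycle $\gamma$.

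Your Step~1 is correct apart from one slip: because $L$ is commutative, the bimultiplicative commutator map lands in $K$, not that it is trivial. But Step~1 is not needed. Step~3 must be replaced by this comparison of an additive and a multiplicative group triple on the line $x',\tilde b,b$.
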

\begin{proof}
    Assume for a contradiction that the extension is central.

    We have $K \simeq \Ga$ since $K$ is a connected $1$-dimensional group (hence isomorphic to one of $\Gm$, $\Ga$, or an elliptic curve) and quasi-isomorphic to $\Ga$ via $\Upsilon$ (which rules out isomorphism with $\Gm$ or any elliptic curve).

    There is a rational section $\alpha: L \supseteq U \rightarrow  G$ for some dense open $U \subseteq L$ (see \cite[Ch.~VII, Sec.~1, Prop.~6]{Serre_alg_gps_and_class_fields}). A rational section $\alpha$ gives rise to a rational cocycle $\gamma$ whose cohomology class in $H^2_{\mathrm{rat}}(L, K)$ determines the extension. Concretely, $\gamma$ is the rational function
    \[\gamma: L \times L \supseteq U\times U \rightarrow K\]
    given by $\gamma(s_1,s_2) = \alpha(s_1 s_2) \alpha(s_2)^{-1} \alpha(s_1)^{-1}$ whenever $s_1,s_2 \in U$.
    It follows that if $k_1,k_2 \in K$ and $s_1,s_2 \in L$, we have
    \[(k_1 \cdot \alpha(s_1)) \cdot (k_2 \cdot \alpha(s_2)) = k_1 k_2 \alpha(s_1) \alpha(s_2) = k_1 k_2 \gamma(s_1,s_2)^{-1} \alpha(s_1s_2).\]

    Fix a section $\alpha$ as above, such that $\alpha$ is defined over $F$. By replacing $U$ with a smaller open set if necessary, we may assume it is contained in a set $V \subset L$ as in the previous claim.
    
    Observe that $\pi(z) \in U$, since $U$ is $F$-definable and $\RM(\pi(z)/F) = 1$ (for instance because $\pi(z)$ is interalgebraic with $z'$). We can write $z = \hat{c} \cdot \alpha(\pi(z))$ where $\hat{c} \in K$, and this defines $\hat{c}$ uniquely. Find $m \in \Ga$ such that $\alpha(\pi(z)) \Rel 3 m$. Then, since
    \[\hat{c} \cdot \alpha(\pi(z)) \Rel 3 (c - m) + m,\]
    we have $\hat{c} \mathrel{\Upsilon} c-m$.  %

    In exactly the same way, $x = \hat{a} \cdot \alpha(\pi(x))$ for some unique $\hat{a} \in K$. Find $n \in \Ga$ such that $\alpha(\pi(x)) \Rel 1 n$, and observe that
    \[\hat{a} \mathrel{\Upsilon} a - n.\]

    Choose $\hat{m},\hat{n} \in K$ satisfying $\hat{m} \mathrel{\Upsilon} m$ and $\hat{n} \mathrel{\Upsilon} n$. Then $\hat{a} + \hat{n} \mathrel{\Upsilon} a$ and $\hat{c} + \hat{m} \mathrel{\Upsilon} c$. So 
    \[\hat{c} - \hat{a} + (\hat{m}-\hat{n}) \mathrel{\Upsilon} b,\] 
    because $b=c-a$. Note that over $F$ we have that $\hat{m}$ is algebraic over $\pi(z)$ and $\hat{n}$ is algebraic over $\pi(x)$.

    Note that by our assumptions on the ranks, $x',y',z',\tilde{b},b,q$ form a group configuration. Further, $(x',y',z')$ is a group triple for $H$ which is interalgebraic with the group triple $(\pi(x),\pi(y),\pi(z))$ for $L$. 
    So (up to extending $F$ by transcendentals independent of everything in sight and passing to the algebraic closure,) by \Cref{thm: group config} and \Cref{cor: quasi-iso config} there exist $\hat{\tilde{b}}, \hat{b},\hat{q} \in L$, interalgebraic over $F$ with $\tilde b, b, q$ respectively, such that $\pi(x)\cdot \hat{b} = \hat{\tilde{b}}$.

    Now let $E$ be the algebraic closure of the field $F(\pi(y))$. Since $\pi(z) = \pi(x)\pi(y)$ and $\pi(x),\pi(y)$ are independent over $F$, we have that $\pi(x)$ and $\pi(z)$ are interalgebraic transcendentals over $E$. Also, $y$ and $\tilde{b}$ are interalgebraic over $E$, since over $F$ we have that $y$ is algebraic over $(y',\tilde{b})$, and $y'$ is interalgebraic with $\pi(y)$.

    To summarize, working over $E$ we have:
    \begin{itemize}
        \item Elements $\hat{m},\hat{n}\in K$ algebraic over $\pi(x)$.
        \item Elements $\hat{a},\hat{c} \in K$ such that $\hat{a}+\hat{n} \mathrel{\Upsilon} a$ and $\hat{c}+\hat{m} \mathrel{\Upsilon} c$.
        \item An element $\hat{b} \in L$ interalgebraic with $b=c-a$ such that $\pi(x)\hat{b}$ is interalgebraic with $y$.
    \end{itemize}

    Write $y = k\cdot \alpha(\pi(y))$ for $k\in K$, and note that $k$ is interalgebraic with $y$ (over $E$). The assumption that the extension is central implies 
    \[(\hat{a} \cdot \alpha(\pi(x))) \cdot (k\cdot \alpha(\pi(y))) = (\hat{a}k\cdot \gamma(\pi(x),\pi(y))^{-1})\alpha(\pi(x)\pi(y)),\]
    where $\pi(x)\pi(y)=\pi(z)$. In other words, it implies (now using additive notation for $K$) that $\hat{a} + k - \gamma(\pi(x),\pi(y)) = \hat{c}$, or $k = \hat{c} - \hat{a} + \gamma(\pi(x), \pi(y))$. Here $\gamma(\pi(x), \pi(y))$ is algebraic over $\pi(x)$. 

    We also have that $\hat c - \hat a + (\hat m - \hat n) \mathrel \Upsilon b$, since $b=c-a$. Since $(\hat m - \hat n)$ is algebraic over $\pi(x)$, also
    \[\hat \eta \coloneqq (\hat m - \hat n) - \gamma(\pi(x),\pi(y))\]
    is algebraic over $\pi(x)$. Now $k + \hat \eta \mathrel \Upsilon b.$ Find $\eta \in \Ga$ such that $\hat \eta \mathrel \Upsilon -\eta$, so that
    \[k \mathrel \Upsilon b + \eta.\]

    We now have that over $E$, $y$ is interalgebraic with $b + \eta$ (where $\eta$ is interalgebraic with $\pi(x)$) and also $y$ is interalgebraic with $\pi(x)\hat b$ (where $\hat b$ is interalgebraic with $b$). So we have two group triples
    \[(b, \eta, b+\eta) \text{ and } (\hat{b}, \pi(x), \pi(x)\hat{b}),\]
    which are interalgebraic over $E$, but one is a group triple for $\Ga$ and the other for $L$. Note that they do in fact have rank $2$ over $E$: $b, x, y'$ are independent over $F$, and $y'$ is interalgebraic over $F$ with $\pi(y)$, so $b,x$ are independent over $E$. By \Cref{thm: quasi-iso}, this gives a contradiction to our assumption that $L$ is not quasi-isomorphic to $\Ga$.
\end{proof}

This rules out the situation where $L$ is an elliptic curve: let $s \in L$ and lift $s$ to $g \in G$. Then conjugation by $g$ defines an automorphism of $K$, and the automorphism is independent of the lift. So \Cref{lem:action_of_quotient} shows that conjugating a fixed $m \in K$ by elements of $L$ and then applying a large enough power $k$ of the Frobenius defines a morphism of algebraic varieties $L \to \Frob^k(K)$. But if $L$ is an elliptic curve, any such morphism is constant (as $K \simeq \Ga$ is affine), so the conjugation action on $K$ is trivial and the extension must be central, contradiction.

We now have $L \simeq \Gm$, and it acts nontrivially on $\Ga$ by conjugation within $G$. This action is, in particular, by endomorphisms of $\Ga$; it follows that there must exist $k\in\ZZ$ such that the action satisfies $s.a = s^k \cdot a$ for all $s \in L$, $a \in K$. There is a unique extension $1 \to \Ga \to G \to \Gm \to 1$ of this form, the semidirect product: see for example \cite[Section 19.1]{Humphreys} (our $G$ is affine by \cite[Ch. 3, Sec. 7, Prop. 11]{Serre_alg_gps_and_class_fields}, hence linear). This proves \Cref{thm: Aff conf}.
\end{proof}

\begin{theorem} \label{thm: Ga transfer}
    Under the assumptions of the previous theorem, assume there are also elements $w\in G$, $w' \in H$, and $d \in G_a$, and an element $\tilde{d}$ of the field $\mathbb{K}$, such that:
    \begin{itemize}
        \item $w'$ and $\tilde{d}$ are algebraic over $w$, 
        \item $w$ is algebraic over $(w',\tilde{d})$,
        \item The triple $x',\tilde{d},d$ is a circuit (meaning that any two of its elements are algebraically independent, but the triple has rank $2$).
    \end{itemize}
    Further assume that there exist quasi-automorphisms $\Phi$ of $G$ and $\varphi$ of $G_a$ such that $d \bumpeq c - \varphi(a)$ and $w \bumpeq \Phi(x)^{-1}\cdot z$. Denote the quasi-isomorphism between $\Ga$ and $K$ constructed in the previous theorem by $\Upsilon$. Then $\Phi_{K} \coloneqq \Phi \cap (K \times K)$ is a quasi-automorphism of $K$, and the following diagram of quasi-isomorphisms is commutative up to $\bumpeq$:
    \[\xymatrix{
        K\ar[d]_\Upsilon\ar[r]^{\Phi_{K}} & K\ar[d]^\Upsilon \\
        \Ga\ar[r]^\varphi & \Ga.
    }\]
\end{theorem}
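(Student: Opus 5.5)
We first check that $\Phi_K$ is a quasi-automorphism of $K$. After this, the commutativity of the diagram comes from computing the $K$-component of $w$ in two ways: once through $\Phi$ and once through $\varphi$ and $\Upsilon$.

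\emph{Step 1: $\Phi_K$ is a quasi-automorphism.} By \Cref{thm: Aff conf}, $G\simeq \Gm\ltimes_\theta\Ga$. By \Cref{lemma: Aut Aff}, modulo $Z(G)\times Z(G)$ the identity component of $\Phi$ is the graph of a definable automorphism of $\Aff$. Such an automorphism preserves the derived subgroup $N=[G,G]=K$ (\Cref{lemma: ext Aff}). It follows that $\Phi^0\cap(K\times K)$ projects onto each factor $K$ with finite kernel. Hence $\Phi_K$ is a quasi-automorphism of $K$, and by \Cref{lemma: Aut G} its restriction is, up to $\bumpeq$, of the form $a\mapsto c_0a^{p^r}$. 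The same lemma gives $\Phi$ the form $(s,a)\mapsto (s^{p^r}, c_0a^{p^r}+b_0-\theta_{s^{p^r}}(b_0))$ up to finite indeterminacy.

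\emph{Step 2: the two expressions for $w$.} Using the identification from \Cref{thm: Aff conf}, write $x=\hat a\,\alpha(\pi(x))$ and $z=\hat c\,\alpha(\pi(z))$ with $\hat a,\hat c\in K$, exactly as in the noncentrality claim. By \Cref{cor: Upsilon}, $\hat a+\hat n\mathrel\Upsilon a$ and $\hat c+\hat m\mathrel\Upsilon c$, where $\hat n\in\acl(\pi(x))$ and $\hat m\in\acl(\pi(z))$. Then
\[w\bumpeq\Phi(x)^{-1}z=\alpha'(\pi(x))^{-1}\,\Phi_K(\hat a)^{-1}\,\hat c\,\alpha(\pi(z)),\]
with $\alpha'=\Phi\circ\alpha$ up to finite groups. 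Now work over $E=\overline{F(\pi(x),\pi(z))}$. By the hypotheses on $w$ (namely $w'\in\acl(w)$, with $w'$ corresponding to $\pi(w)$, and $w\in\acl(w',\tilde d)$), $w$ is interalgebraic over $E$ with its $K$-coordinate. Conjugating by the $E$-definable element $\alpha'(\pi(x))$, this coordinate is $\theta$-twisted by an $E$-definable scalar, so it is interalgebraic over $E$ with $\Phi_K(\hat a)^{-1}\hat c$ up to an $E$-definable scalar factor $\lambda$ and translate. On the other side, $d\bumpeq c-\varphi(a)$. Since $x',\tilde d,d$ is a circuit and $\tilde d\in\acl(w)$, we get $d\in\acl_E(w)$. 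So over $E$ we have interalgebraicity of
\[\hat c-\lambda\,\Phi_K(\hat a)\quad\text{and}\quad \Upsilon^{-1}(c)-\Upsilon^{-1}\varphi(a),\]
up to $E$-definable translates.

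\emph{Step 3: rigidity.} Over $E$, the elements $\hat a$ and $\hat c$ are independent generics of $K$; the rank count is that $(x,z)$ has rank $4$ while $(\pi(x),\pi(z))$ has rank $2$. Consider the two group triples $(\hat c,\,-\lambda\Phi_K(\hat a),\,\hat c-\lambda\Phi_K(\hat a))$ and $(\Upsilon^{-1}c,\,-\Upsilon^{-1}\varphi\Upsilon(\hat a),\,\dots)$ in $K$. They are interalgebraic over $E$, so \Cref{thm: quasi-iso} yields a quasi-automorphism $\Theta$ of $K$ with $\Theta\bumpeq\mathrm{id}$ on the first coordinate and $\Theta\circ\lambda\Phi_K\bumpeq\Upsilon^{-1}\varphi\Upsilon$ on the second. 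This gives $\Upsilon\Phi_K\bumpeq\varphi\Upsilon$ up to the scalar $\lambda$.

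The main obstacle is removing $\lambda$, which depends on $\pi(x)$. A scalar factor in $L_F(\Ga)$ is itself a quasi-automorphism. Since both $\Phi_K$ and $\varphi$ are $F$-definable, whereas $\lambda=\theta_{s}$ for $s$ a power of $\pi(x)$, the choices should be made so that the twist cancels. Concretely, I would pick the section $\alpha$ so that $\Phi\circ\alpha\bumpeq\alpha\circ\Phi_L$, using the automorphism form from \Cref{lemma: Aut G}, which makes the conjugation exactly compensate. If this does not work, the fallback is to vary $\pi(x)$ generically: an $F$-definable equality that holds for generic parameters forces $\lambda$ to be constant, as in the uniqueness remark after \Cref{thm: quasi-aut}.
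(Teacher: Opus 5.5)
Your architecture is sound: work over $E=\overline{F(\pi(x),\pi(z))}$, show that $w$ (hence $d$) is interalgebraic over $E$ with a $K$-element built from $\hat a,\hat c$, apply \Cref{thm: quasi-iso} to two interalgebraic $\Ga$-triples, and finish by rigidity. But Step 2 contains an error that creates your ``main obstacle'', and as written the proof does not conclude.

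\textbf{The error in Step 2.} Put $g=\alpha'(\pi(x))$ and $h=\alpha(\pi(z))$; both are $E$-definable. Then $w\bumpeq g^{-1}uh$ with $u=\Phi_K(\hat a)^{-1}\hat c$, i.e.\ $u=\hat c-\Phi_K(\hat a)$ additively. Left and right multiplication by $E$-definable elements are $E$-definable bijections, so $w$ and $u$ are interalgebraic over $E$ with no scalar at all. If you extract a $K$-coordinate via $g^{-1}uh=(g^{-1}ug)(g^{-1}h)$, the $\theta$-twist multiplies all of $u$. You get $\lambda(\hat c-\Phi_K(\hat a))$ plus an $E$-definable translate, not $\hat c-\lambda\Phi_K(\hat a)$.

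Your displayed interalgebraicity is in fact false for $\lambda\neq1$. The element $\Upsilon^{-1}(c)-\Upsilon^{-1}\varphi(a)$ is interalgebraic over $E$ with $\hat c-\Phi_K(\hat a)$. That element is not interalgebraic with $\hat c-\lambda\Phi_K(\hat a)$: their difference $(1-\lambda)\Phi_K(\hat a)$ is interalgebraic with $\hat a$, and $\hat a,\hat c$ are independent generics over $E$.

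\textbf{Your remedies do not close the gap.} Step 3 as written only yields $\lambda\Phi_K\bumpeq\Upsilon^{-1}\varphi\Upsilon$. A section with $\Phi\circ\alpha\bumpeq\alpha\circ\Phi_L$ still conjugates by the nonconstant element $\alpha(\Phi_L(\pi(x)))$, so the twist remains. The genericity argument, even granting that it forces $\lambda$ to be constant, only gives commutativity up to a scalar in $F^\times$, which is weaker than the statement.

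\textbf{The repair.} Drop $\lambda$. You have $d\in\acl_E(w)$, and both $d$ and $w$ have rank $1$ over $E$, so $d$ is interalgebraic over $E$ with $w$, hence with $\hat c-\Phi_K(\hat a)$. Step 3 then runs with $\lambda=1$, provided you state the rigidity fact you use twice. An $E$-definable quasi-automorphism $\{(\alpha,\beta)\mid f(\alpha)=g(\beta)\}$ of $\Ga$, with $f,g$ $p$-polynomials over $E$, that relates an element transcendental over $E$ to itself up to an $E$-definable translate satisfies $f=g$, and so is $\bumpeq$ the identity. Applied at $\hat c$ this gives $\Theta\bumpeq\mathrm{id}$. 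Applied at $\hat a$ it turns $\Phi_K(\hat a)\bumpeq\Upsilon^{-1}\varphi\Upsilon(\hat a)+e$ into $\Upsilon\circ\Phi_K\bumpeq\varphi\circ\Upsilon$.

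\textbf{Comparison with the paper.} Once repaired, your proof is essentially the paper's.
\begin{itemize}
\item The paper takes $\Phi$ connected, so \Cref{lemma: semidirect QAut} makes it the graph of an automorphism of $G$. This is cleaner than your passage through $\Aff$ modulo the center, where the preimage of $[\Aff,\Aff]$ is $Z(G)K$ and you must take identity components.
\item It writes $w=(\hat x_m^{-1}z_m,\hat x_m^{-1}(z_a-\hat x_a))$ in semidirect coordinates, so the scalar visibly multiplies the whole difference.
\item It applies \Cref{thm: quasi-iso} over $E=\overline{F(x_m,z_m)}$ to $(\Phi_K(x_a),k,z_a)$ and $(\varphi(a),d,c)$.
\item It identifies the resulting $\Theta$ with $\Upsilon$ via $z_a\mathrel\Upsilon c$ (from $z\Rel 3 c$ and \Cref{cor: Upsilon}) plus the same rigidity argument.
\end{itemize}
Your use of the section $\alpha$ with $\hat a+\hat n\mathrel\Upsilon a$ and $\hat c+\hat m\mathrel\Upsilon c$ is an equivalent way of transporting $a,c$ into $K$ before comparing.
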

\begin{proof}
We may assume $\Phi$ is connected by passing to $\Phi^0$ if necessary. \Cref{lemma: semidirect QAut} shows $\Phi$ is the graph of an $F$-definable automorphism of $G$, so the equation $w \bumpeq \Phi(x)^{-1} z$ is an honest equality. 

As an automorphism of groups, $\Phi$ restricts to an automorphism of the derived subgroup $[G,G]=K$ (see \Cref{lemma: ext Aff}), so $\Phi_K = \Phi \cap (K \times K)$ is a quasi-automorphism of $K$ (in fact the graph of an $F$-definable automorphism).

Using the isomorphism $G\simeq \Gm \ltimes_\theta \Ga$, we write $x = (x_m, x_a)$, $w=(w_m, w_a)$, and $z = (z_m, z_a)$. It is convenient to also denote $\Phi(x) = (\hat x_m, \hat x_a)$. Observe that while $\hat x_m$ depends only on $x_m$, $\hat x_a$ may depend both on $x_a$ and on $x_m$. Specifically, by \Cref{lemma: Aut G} there exist $F$-definable $q_1 \in \Gm$, $q_2 \in \Ga$, and $r\in\ZZ$, such that $\hat x_a = q_1x_a^{p^r} + q_2 - \theta_{x_m^{p^r}}(q_2)$.

By assumption we have
\[w = (\hat x_m ^{-1} z_m, \ \hat x_m^{-1}(z_a - \hat x_a)).\] 
Therefore $z_a - \hat x_a$ is in the closure of $\{\hat x_m, w\}$, which equals the closure of $x_m, w$. From the explicit description of $\hat x_a$ above, we see that also $k\coloneqq z_a - q_1x_a^{p^r}$ is in the closure of $x_m, w$. We also have that $d$ is in the closure of $x_m, w$, because $\acl(x_m) = \acl(x')$.

Define $E = \overline{F(x_m, z_m)}$. Then in $E$ we have $\acl(x_a)=\acl(a)$, $\acl(z_a)=\acl(c)$, as well as $\acl(k)=\acl(x_m,w)=\acl(x',w)=\acl(d)$: 
\begin{itemize}
    \item To see $\acl(k)=\acl(x_m,w)$ over $E$ observe that $\acl(x_m)=\acl(\emptyset)$ and $\acl(w)=\acl(w_a)$, where $w_a=x_m^{-1} \cdot (k - q_2 + \theta_{x_m^{p^r}}(q_2))$, and $q_2$ is $F$-definable. 
    \item To see $\acl(d)=\acl(x',w)$ over $E$ observe that $\acl(d)=\acl(d,x',w')$ since $x',w'$ are in $E$, and $\tilde{d} \in \acl(d,x')$. So $\tilde{d},w'\in\acl(d)$ over $E$; but $w \in \acl(\tilde{d},w')$ over $F$, hence also over $E$.
\end{itemize}

Also note that $\RM(a,c/E)=\RM(x_a,z_a/E)=2$. So over $E$, $(\Phi_K(x_a), k, z_a)$ and $(\varphi(a),d,c)$ are two elementwise interalgebraic group triples. By \Cref{thm: quasi-iso} there exists an $E$-definable quasi-isomorphism $\Theta$ taking one to the other, up to $E$-definable translations. These are $\Ga$ group triples, so the translations generate finite normal subgroups of $\Ga$, and they may be eliminated by enlarging $\Theta$. In particular, 
\[z_a \mathrel\Theta c.\] We will show that also 
\[z_a \mathrel \Upsilon c,\]
up to an $E$-definable translation that can be eliminated by enlarging $\Upsilon$ by a finite subgroup.
Observe that this implies $\Theta \bumpeq \Upsilon$: first, it implies that $c \mathrel{\Theta \circ \Upsilon^{-1}} c$. Now, writing $\Theta \circ \Upsilon^{-1}$ as $\{(\alpha,\beta) \in \Ga \times \Ga \mid f(\alpha) = g(\beta)\}$ for appropriate $p$-polynomials $f,g\in \End^\mathrm{alg}_E(\Ga)$, we see $f(c)=g(c)$. But $f,g$ are in particular polynomials with coefficients in $E$, and $c$ is transcendental over $E$. So $f = g$. Hence $\Theta \circ \Upsilon^{-1} \supseteq \{(x,x)\mid x \in \Ga\}$. But the right-hand side of the inclusion (the identity quasi-automorphism) is of dimension $1$, so it must have finite index in $\Theta \circ \Upsilon^{-1}$, and the two are $\bumpeq$-equivalent.

It remains to show that $z_a \mathrel \Upsilon c$, up to an $E$-definable translation. In the notation of \Cref{thm: Aff conf} (and \Cref{cor: Upsilon}), we have $z \Rel 3 c$. Let $\sigma$ be the section of the quotient map $G \to \Gm$ that takes $s\in\Gm$ to $(s,0) \in G$. Then $z = z_a \sigma(z_m) \Rel 3 c$. There exists $\lambda \in E$ such that $\sigma(z_m) \Rel 3 \lambda$, since $\sigma$ and $\oRel 3$ are both $F$-definable, and $z_m \in E$. So $z_a \Upsilon c - \lambda$ by \Cref{cor: Upsilon} (and by the fact that $z_m$ is transcendental over $F$, hence in the open set $V$ of that corollary). The theorem follows.

\end{proof}

\section{Undecidability}

The affine group, through its group of quasi-automorphisms, connects the endomorphism ring of $\Gm$ to that of $\Ga$. We use this connection and the von~Staudt technique due to Evans--Hrushovski to give an existential definition of the field $\FF_p(t)$ inside $F(x; \Frob)$ with a distinguished element~$t$. This element is constructed first in $L_F(\Gm)=\QQ$ and then transferred to $L_F(\Ga)=F(x;\Frob)$. The following theorem due to Pheidas (for characteristic $\neq 2$) and Videla (for characteristic $2$) then yields undecidability.

\begin{theorem}[\cite{Pheidas,Videla}] \label{thm: Pheidas--Videla}
For each prime power $q$ the existential theory of the field $\FF_q(t)$ with a constant symbol for $t$ is undecidable.
\end{theorem}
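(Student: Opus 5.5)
This theorem is quoted from the literature rather than proved here, so the plan is to reduce it to the precise statements in \cite{Pheidas} and \cite{Videla}, and to recall the architecture of those proofs as a sanity check. First I will check that the formulations match. Pheidas proves that Hilbert's tenth problem for $\FF_q(t)$ with $q$ odd is undecidable: there is no algorithm deciding whether a finite system of polynomial equations with coefficients in $\ZZ[t]$ has a solution in $\FF_q(t)$. Videla proves the same for $q$ a power of $2$. A finite system $f_1=\dots=f_n=0$ is equivalent to the single existential sentence $\exists \bar y\colon f_1(\bar y)=0\wedge\dots\wedge f_n(\bar y)=0$ in the language of rings with a constant for $t$. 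So deciding the existential theory would decide this Diophantine problem. This covers every prime power $q$, odd or even.

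Next I will recall the mechanism behind these results, in case a self-contained sketch is wanted. The strategy is to interpret a structure with undecidable positive existential theory inside $\FF_q(t)$ by Diophantine formulas. One uses a Pell-type equation; in odd characteristic this is $X^2-(t^2-1)Y^2=1$ over $\FF_q[t]$. Its solutions are powers of a fundamental unit, and the exponent $n$ serves as a code for natural numbers. One then shows that the following are existentially definable: the set of such solutions; a formula singling out the integers (polynomials) among the elements of $\FF_q(t)$; addition of exponents; and the relation ``$m=p^s n$ for some $s$''. Together these give a Diophantine model of $(\mathbb N,+,\mid_p)$, where $n\mid_p m$ means $m=p^s n$. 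Pheidas showed that the positive existential theory of this structure is undecidable, since multiplication can be recovered from it by an existential trick. This transfers undecidability to $\FF_q(t)$.

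The step I expect to be the main obstacle, if one actually carried this out rather than cited it, is characteristic $2$. There the Pell equation degenerates (squaring is additive), so the parametrization of units by exponents must be replaced by an Artin--Schreier-type equation, and the definition of $\mid_p$ has to be redone. This is exactly what Videla's paper handles. For the purposes of this paper I would simply cite both papers and note the equivalence between Diophantine solvability and the existential theory with the constant $t$.
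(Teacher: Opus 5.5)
Your actual argument is correct, and it is exactly how the paper treats this theorem: it is cited, not proved. You quote Pheidas for odd $q$ and Videla for even $q$. You observe that solvability of a finite system over $\ZZ[t]$ is a single positive existential sentence in the language with a constant for $t$. So a decision procedure for the existential theory would decide Hilbert's tenth problem for $\FF_q(t)$.

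The mechanism you recall as a sanity check, however, is not the one in these papers, and one of its steps would fail if carried out.

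\emph{Where the sketch fails.} Coding $n$ by exponents of a Pell unit is the method for the \emph{polynomial ring} $\FF_q[t]$ (Denef). Over the field $\FF_q(t)$, the conic $X^2-(t^2-1)Y^2=1$ has the point $(1,0)$ and hence a rational parametrization. So most of its solutions, e.g.\ $X=t^2/(2-t^2)$, $Y=2/(2-t^2)$, are not $\pm$ powers of $t+\sqrt{t^2-1}$. To discard them you need your step ``a formula singling out the polynomials among the elements of $\FF_q(t)$'', i.e.\ an existential definition of $\FF_q[t]$ in $\FF_q(t)$. As far as I know no such definition is known; it is the function-field analogue of existentially defining $\ZZ$ in $\QQ$. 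Pheidas's proof is designed to avoid it.

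\emph{What Pheidas does instead.} As the paper's remark after the theorem indicates, integers are encoded in $t$-adic orders. One existentially defines two things. The first is the single valuation ring $\{x : \operatorname{ord}_t x \ge 0\}$, a local rather than global integrality condition. The second is a $p$-power relation, roughly ``$y=x^{p^s}$ for some $s$''; as I recall, this uses Artin--Schreier conditions of the shape $y-x=u^p-u$, $y^{-1}-x^{-1}=v^p-v$. Since $\operatorname{ord}_t$ turns products into sums and $p$-powers into $\mid_p$, this gives an existential interpretation of $(\ZZ,+,\mid_p)$. Restricting to $\operatorname{ord}_t x\ge 0$ gives one of $(\mathbb N,+,\mid_p)$. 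So the final step of your sketch does go through, once the coding is done this way.

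Accordingly, Videla's contribution in characteristic $2$ is to supply these existential definitions there, not to repair a degenerate Pell equation. None of this affects your citation-based argument, but the sketch should not be offered as an outline of the cited proofs.
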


The proof encodes the arithmetic of $\ZZ$ into the $t$-adic orders of elements of $\FF_q(t)$. %
The constructed existential formulas require $t$ to be available in the language and they are not uniform in the finite base field but require its cardinality~$q$ to be known in advance. %

\begin{proposition} \label{prop: Gm to Aff quasi-aut}
Consider quasi-automorphisms $\Phi \le G \times G$ and $\Phi' \le \Gm \times \Gm$ and a quasi-epimorphism $\Psi \le G \times \Gm$ such that $\Psi \circ \Phi \bumpeq \Phi' \circ \Psi$.
If $\Phi'$ is equivalent to the Frobenius $s \mapsto s^p$ then $\Phi$ is equivalent to the group automorphism $(s,a) \mapsto (s^p, c a^p + b - \theta_{s^p}(b))$ for constants $b \in \Ga$ and $c \in \Gm$.
\end{proposition}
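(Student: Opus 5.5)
Replace $\Phi$ by its identity component $\Phi^0$, which is $\bumpeq$-equivalent to $\Phi$. By \Cref{lemma: semidirect QAut}, $\Phi^0$ is the graph of an $F$-definable automorphism $\phi$ of $G=\Gm\ltimes_\theta\Ga$. \Cref{lemma: Aut G} then gives $b\in\Ga$, $c\in\Gm$ and $r\in\ZZ$ with
\[\phi(s,a)=(s^{p^r},\, c a^{p^r}+b-\theta_{s^{p^r}}(b)).\]
So it remains to show $r=1$. We will read $r$ off from the action induced on the quotient $\Gm$ and compare it with $\Phi'$.

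Next we analyse $\Psi$. Its graph is a definable subgroup of $G\times\Gm$; pass to $\Psi^0$. The projection to the left factor has finite kernel, so composing with a quotient of $\Gm$ by a finite subgroup, $\Psi$ becomes (up to $\bumpeq$) a surjective definable homomorphism $f\colon G\to \Gm/\mu$. Its kernel contains $[G,G]=N\simeq\Ga$ by \Cref{lemma: ext Aff}(\ref{ext Aff: commutators}), since $\Gm$ is commutative. Hence $f$ factors through $G/N\simeq\Gm$, i.e.\ $f(s,a)=\chi(s)$ for a definable surjective endomorphism $\chi$ of $\Gm$ (up to finite indeterminacy). By \Cref{lemma: Def hom}, $\chi(s)=s^{m}$ for a nonzero $m\in\ZZ[1/p]$. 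We work throughout in $\QAut(\Gm)\simeq L_F(\Gm)^\times=\QQ^\times$ (\Cref{prop: quasi-aut as skew field}), where $\chi$ corresponds to $m$ and the $p$-th power map corresponds to $p$. Since $\Gm$ is one-dimensional, these rational numbers commute.

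Now evaluate both sides of $\Psi\circ\Phi\bumpeq\Phi'\circ\Psi$ on $(s,a)$. The left side gives $\chi(s^{p^r})=s^{mp^r}$ and the right side gives $(s^m)^p=s^{mp}$, modulo finite subgroups. The defining equation of the equivalence is then an equality of the connected components of the graphs of $s\mapsto s^{mp^r}$ and $s\mapsto s^{mp}$ in $\Gm\times\Gm$. By the injectivity claim in \Cref{prop: quasi-aut as skew field} this means $mp^r=mp$ in $\QQ^\times$. Since $m\neq 0$, we get $p^r=p$, so $r=1$. The form of $\phi$ is therefore exactly the one claimed, with the constants $b,c$ from \Cref{lemma: Aut G}. (If \Cref{lemma: Aut G} was applied after composing with an inner automorphism, that conjugation is already absorbed into the constant $b$.)

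The main obstacle is the bookkeeping in the middle step: justifying that $\Psi$, a quasi-epimorphism rather than a map, reduces to the character $s\mapsto s^m$ on the quotient, including the finite kernels and possible Frobenius twists. To handle this we take identity components throughout and use that every definable subgroup of $G\times\Gm$ whose connected component projects onto $\Gm$ must contain $N\times\{1\}$, since $N=[G,G]$ and the target is abelian. The remaining comparisons are then of the form ``two connected graphs of characters of $\Gm$ are equal,'' which is exactly the situation handled by \Cref{prop: quasi-aut as skew field}.
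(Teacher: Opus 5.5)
Your proposal is correct and uses the same ingredients as the paper's proof. Both arguments factor $\Psi$ through $G/[G,G]\simeq\Gm$ to get a power map $s\mapsto s^m$, use \Cref{lemma: semidirect QAut} with \Cref{lemma: Aut G} to force the shape $(s,a)\mapsto(s^{p^r},ca^{p^r}+b-\theta_{s^{p^r}}(b))$, and compare exponents, $mp^r=mp$, to conclude $r=1$. The only difference is that you apply \Cref{lemma: semidirect QAut} directly to $\Phi^0\le G\times G$, whereas the paper pushes $\Phi$ down to $\Aff\times\Aff$ via $\tau$, invokes \Cref{lemma: Aut Aff}, and lifts back. Your shortcut is legitimate, since the lemma is stated for any $\Gm\ltimes_\theta\Ga$ and the paper itself uses it this way in \Cref{thm: Ga transfer}; one small correction is that $m$ should range over $\QQ^\times$ rather than $\ZZ[1/p]$ (e.g.\ $\Psi=\{((s,a),t)\mid t^{\ell}=s\}$ for a prime $\ell\neq p$), which is harmless because you only use $m\neq 0$.
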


\begin{proof}
Since $\Psi$ is a quasi-epimorphism of $G$ onto $\Gm$, its kernel $K = \Set{ (s,a) \in G \mid (s,a) \mathrel{\Psi} 1 }$ is quasi-isomorphic to~$[G, G] \simeq \Ga$. Indeed, $\Psi$ induces an epimorphism $\psi\colon G \to \Gm/N$ with a finite $N \trianglelefteq \Gm$. The image is abelian, hence $[G,G] \subseteq \ker \psi \bumpeq K$. Moreover, $(G/[G,G])/(K/[G,G]) \simeq G/K \simeq \Gm/N$ shows that the index of $[G,G]$ in $K$ is finite and thus $K \bumpeq [G,G]$ in~$G$. In~particular $\Psi(1,a) = \Set{ s \in \Gm \mid (1,a) \mathrel{\Psi} s } \subseteq N$ for all $(1,a) \in [G,G]$, i.e., $\Psi(1,a) \bumpeq 1$.
Using that $\Psi$ is a subgroup, we have $\Psi(s,a) \bumpeq \Psi(s,0) \cdot \Psi(1,a) \bumpeq \Psi(s,0)$. The restriction $\psi \restriction_H$ to $H = \Set{ (s,0) \in G } \simeq \Gm$ is, up to a finite quotient, a definable surjective endomorphism of~$\Gm$. By~\Cref{lemma: Def hom} it is hence of the form $\psi(s,0) = s^{n/p^m} N$ for some integers~$n \neq 0$ and $m \ge 0$. This~means $\Psi(s,a) \bumpeq s^{n/p^m}$.

The quasi-isomorphism $\Phi'$ acts on $\Gm$ like the Frobenius, up to a finite normal subgroup~$N' \trianglelefteq \Gm$. Thus, modulo the finite normal subgroup $N \cdot N' \trianglelefteq \Gm$ we have $(\Phi' \circ \Psi)(s,a) \bumpeq s^{np/p^m}$.
On~the other hand, let $\Phi(s,a) \bumpeq (t,d)$ and consider $(\Psi \circ \Phi)(s,a) \bumpeq \Psi(t,d) \bumpeq t^{n/p^m}$. The~assumption $\Psi \circ \Phi \bumpeq \Phi' \circ \Psi$ now reads $s^{np/p^m} \bumpeq t^{n/p^m}$. By~enlarging the finite normal subgroup of $\Gm$ implicit in this statement by $n$-th roots of unity and applying the Frobenius automorphism $m$ times, we obtain $t \bumpeq s^p$. Therefore, $\Phi(s,a) \bumpeq (s^p, d)$ for some~$d$ that depends~on~$(s,a)$.

Recall that $\tau\colon G \to \Aff$, $(s,a) \mapsto (s^k, a)$, has kernel $Z(G)$.
Let~$\pi\colon G \times G \to \Aff \times \Aff$ be the canonical quotient map modulo $Z = Z(G) \times Z(G)$. The subgroup $\pi(\Phi)$ is $\bumpeq$-equivalent to $\pi(\Phi)^0$. By~\Cref{lemma: Aut Aff}, this is the graph of a definable group automorphism $\varphi \in \Aut_{F}(\Aff)$, and \Cref{lemma: Aut G} guarantees that $\varphi(s,a) = (s^{p^r}, c a^{p^r} + b - s^{p^r} b)$ for $b \in \Ga$, $c \in \Gm$ and $r \in \mathbb{Z}$.
Consider the following relation $\ol{\varphi}$ on $G$ which is essentially the inverse image of $\varphi$ under $\tau \times \tau$:
\[
  (s,a) \mathrel{\ol{\varphi}} (t,d) \iff %
  \left(t^k = s^{kp^r} \;\land\;\, d = c a^{p^r} + b - s^{kp^r} b\right).
\]
This defines a subgroup of $G \times G$ with $\pi(\ol{\varphi}) = \varphi = \pi(\Phi)$ which shows that $\ol{\varphi} \bumpeq \Phi$. Hence,
\[
  (s^p, d) \bumpeq \Phi(s,a) \bumpeq \ol{\varphi}(s,a) \bumpeq (s^{p^r}, c a^{p^r} + b - s^{kp^r} b),
\]
so $r = 1$ and $\Phi(s,a) \bumpeq (s^p, c a^p + b - \theta_{s^p}(b))$ as claimed.
\end{proof}

\begin{lemma} \label{lemma: x centralizer}
The centralizer of $x$ in $F(x; \Frob)$ is the ring~$\FF_p(x)$ which is isomorphic to the ordinary function field~$\FF_p(t)$. Moreover, if $\varphi$ is conjugate to $x$, then its centralizer is $\FF_p(\varphi)$.
\end{lemma}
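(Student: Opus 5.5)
We first compute the centralizer inside the polynomial ring $F[x;\Frob]$ and then pass to the skew field of fractions.

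\textbf{Step 1: the polynomial ring.} Let $f=\sum_i c_i x^i\in F[x;\Frob]$. Since $x c = c^p x$ for $c \in F$, we get
\[
xf=\sum_i c_i^p x^{i+1}, \qquad fx=\sum_i c_i x^{i+1}.
\]
So $f$ commutes with $x$ exactly when $c_i^p=c_i$ for every $i$, that is, when every $c_i\in\FF_p$. The centralizer of $x$ in $F[x;\Frob]$ is therefore $\FF_p[x]$. This is a commutative polynomial ring, since the elements of $\FF_p$ commute with $x$.

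\textbf{Step 2: the skew field.} Let $u=\psi^{-1}\phi\in F(x;\Frob)$ commute with $x$. I would clear the denominator with a degree argument in the $x$-adic valuation. The ring $F[x;\Frob]$ has a degree function, and $F(x;\Frob)$ embeds into the skew Laurent series ring $\Laurent{F}{x^{-1}}$ (twisted), where expansions in powers of $x^{-1}$ make sense because $\Frob$ is an automorphism of the perfect field $F$. Write $u=\sum_{i\le n}c_ix^i$ there. Then $xu=ux$ gives $c_i^p=c_i$ for every $i$, so $u\in\Laurent{\FF_p}{x^{-1}}$.

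It remains to show that $u$ is rational over $\FF_p$. I would argue as follows. The element $u$ lies in $F(x;\Frob)\cap\Laurent{\FF_p}{x^{-1}}$, and $u$ satisfies a linear relation $\psi u=\phi$ with $\psi,\phi\in F[x;\Frob]$. Apply a Galois-type averaging: for any $\sigma\in\Aut(F/\FF_p)$, extended coefficientwise, $\sigma$ fixes $u$. Hence $\sigma(\psi)u=\sigma(\phi)$. By choosing a basis of $F$ over $\FF_p$, and using that the space of pairs $(\psi,\phi)$ with $\psi u=\phi$ and bounded degrees is an $F$-vector space stable under coefficientwise $\Frob$, one can find $\psi$ with coefficients in $\FF_p$. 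This is the standard descent argument (Hilbert 90 / Lang style for $\Frob$-semilinear spaces). Then $u=\psi^{-1}\phi\in\FF_p(x)$.

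A cleaner alternative for this step uses the Ore condition directly. The centralizer $C$ of $x$ is a division subring containing $\FF_p(x)$. Extend the valuation argument to show that $C$ has finite dimension over the commutative field $\FF_p(x)$, then use that $C\subseteq\Laurent{\FF_p}{x^{-1}}$, which is commutative. The main obstacle is this descent from $F$-coefficients to $\FF_p$-coefficients, and I would try the semilinear-descent route first.

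\textbf{Step 3: identification and the conjugate case.} The map $\FF_p(t)\to\FF_p(x)$, $t\mapsto x$, is an isomorphism. It is injective because $\FF_p[x]$ is a commutative polynomial ring, which is a domain, so no nonzero polynomial maps to $0$. For the ``moreover'' part, suppose $\varphi=gxg^{-1}$. Conjugation by $g$ is an automorphism of $F(x;\Frob)$ carrying $x$ to $\varphi$, so it carries the centralizer of $x$ onto the centralizer of $\varphi$. This gives $g\,\FF_p(x)\,g^{-1}=\FF_p(\varphi)$, because conjugation fixes $\FF_p$: it is contained in the center.
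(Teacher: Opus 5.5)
Your argument is correct and is essentially the paper's. You embed $F(x;\Frob)$ in a skew Laurent series division ring (you expand in $x^{-1}$, the paper in $x$), deduce from $xu=ux$ that all coefficients of $u$ lie in $\FF_p$, cut back to $F(x;\Frob)$, and transport centralizers by conjugation, which you do directly in $F(x;\Frob)$ rather than in the series ring. The only real difference is the cutting-back step: the paper cites Cohn's Corollary~1.5.7, while you appeal to Lang-type semilinear descent.

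That step is in fact elementary. It needs neither averaging over $\Aut(F/\FF_p)$ nor your ``cleaner alternative'', which would not conclude anyway, since $\Laurent{\FF_p}{x^{-1}}$ contains proper finite extensions of $\FF_p(x)$. Products of series with $\FF_p$-coefficients are untwisted. So take an $\FF_p$-basis $(e_\beta)$ of $F$ and write $\psi=\sum_\beta e_\beta\psi_\beta$ and $\phi=\sum_\beta e_\beta\phi_\beta$ with $\psi_\beta,\phi_\beta\in\FF_p[x]$. Comparing coefficients in $\psi u=\phi$ then gives $\psi_\beta u=\phi_\beta$ for all $\beta$, so $u=\psi_\beta^{-1}\phi_\beta\in\FF_p(x)$ for any $\beta$ with $\psi_\beta\neq 0$.
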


\begin{proof}
Let $S = \Laurent{F}{x; \Frob}$ be the skew Laurent series ring which contains $R = F(x; \Frob)$ as a subring \cite[Section~1.5]{CohnFIR}.
Let $C_S(x) = \Set{ f \in S \mid fx = xf }$ denote the centralizer of $x$ in~$S$. With the Frobenius canonically extended to $S$ by acting on the coefficients, it holds that $fx = xf$ if and only if $f = \Frob(f)$. By coefficient comparison every such $f$ has coefficients in $\FF_p$, thus $C_S(x) = \Laurent{\FF_p}{x; \Frob}$. From the definition of centralizer and \cite[Corollary~1.5.7]{CohnFIR} it then follows that $C_R(x) = C_S(x) \cap R = \FF_p(x; \Frob)$.
Finally, observe that the subring $C_R(x) = \FF_p(x; \Frob) \subseteq R$ is commutative and thus just the ordinary field of rational functions~$\FF_p(x)$. This observation also applies to $C_S(x)$ and shows that it equals the ordinary Laurent series ring $\Laurent{\FF_p}{x}$.

If $\varphi = y^{-1} x y$ then, using that $y$ commutes with every Laurent series coefficient from $\FF_p$,
\[
  C_S(\varphi) = y^{-1} \, C_S(x) \, y = y^{-1} \, \Laurent{\FF_p}{x} \, y = \Laurent{\FF_p}{y^{-1}xy} = \Laurent{\FF_p}{\varphi}.
\]
As before, cutting back to $R$ gives $C_R(\varphi) = C_S(\varphi) \cap R = \FF_p(\varphi)$.
\end{proof}

\begin{theorem}[Theorem 1.1(1)] \label{thm: main}
For each $p > 0$ the recognition problem for algebraic matroids in characteristic $p$ is undecidable.
\end{theorem}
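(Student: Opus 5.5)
We reduce the existential theory of $\FF_p(t)$ (with a constant for $t$), which is undecidable by \Cref{thm: Pheidas--Videla}, to algebraicity of matroids in characteristic $p$. The reduction is computable and goes through a fixed matroid gadget $M_0$, built from the configuration of \Cref{fig: Aff conf}, to which we attach von Staudt constructions.

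\textbf{Step 1: the gadget $M_0$.} We take the points of \Cref{fig: Aff conf}: rank-2 points $x,y,z,w$, an $H$-plane on the primed points, a $\Ga$-plane through $a,b,c,d$, and the auxiliary points $q,r,\tilde b,\tilde d$. We add three further pieces.
\begin{itemize}
\item Triple-stacked configurations (\Cref{fig: triple stacked}) placing $w$ as the ``$b_2$'' point over the rank-2 triple $(x,\cdot,z)$. By \Cref{thm: quasi-aut}, $w\bumpeq\Phi(x)^{-1}z$ for a quasi-automorphism $\Phi$ of $G$.
\item The analogous primed gadget in the $H$-plane, labelling $w'$ by a quasi-automorphism $\Phi'$ of $H$.
\item A gadget in the $\Ga$-plane labelling $d$ by a quasi-automorphism $\varphi$, so that $d\bumpeq c-\varphi(a)$.
\end{itemize}
In the $H$-plane we then use von Staudt constructions (addition of $1$ with itself $p$ times) to force $\Phi'$ to be the element $p\in L_F(H)$.

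\textbf{Step 2: the gadget forces $\varphi$ to be conjugate to $x$.} Let $M_0$ be realized in some $\mathbb{K}/F$ with $F$ algebraically closed of characteristic $p$.
\begin{itemize}
\item By \Cref{thm: Aff conf}, $G\simeq\Gm\ltimes_\theta\Ga$ and $H$ is quasi-isomorphic to $\Gm$. Since $H$ is not of exponent $p$, the element $p$ makes sense and is nonzero in $L_F(H)=\QQ$, and $\Phi'$ is equivalent to $s\mapsto s^p$.
\item \Cref{thm: Gm transfer}, applied with $\Psi_1$ the quasi-epimorphism $G\to H$, gives $\Psi_1\circ\Phi\bumpeq C\circ\Phi'\circ\Psi_1$. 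The target is commutative, so the inner automorphism $C$ is trivial.
\item \Cref{prop: Gm to Aff quasi-aut} then gives $\Phi(s,a)\bumpeq(s^p,ca^p+\dots)$. Hence $\Phi_K$ acts on $K\simeq\Ga$ as $a\mapsto ca^p$, i.e.\ as $cx$, which is conjugate to $x$ in $F(x;\Frob)$.
\item By \Cref{thm: Ga transfer}, $\varphi=\Upsilon\Phi_K\Upsilon^{-1}$ in $L_F(\Ga)^\times$ (via \Cref{prop: quasi-aut as skew field}). So $\varphi$ is conjugate to $x$, and by \Cref{lemma: x centralizer} its centralizer is $\FF_p(\varphi)\cong\FF_p(t)$ with $t\mapsto\varphi$.
\end{itemize}

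\textbf{Step 3: the reduction and its converse.} Given an existential sentence $\exists \bar y\,\bigwedge_i f_i(t,\bar y)=0$, we add to $M_0$ one von Staudt point in the $\Ga$-plane for each variable $y_j$. We then impose by von Staudt constructions the equations $f_i(\varphi,\bar y)=0$ and $y_j\varphi=\varphi y_j$. This produces a matroid $M$ computably from the sentence.
\begin{itemize}
\item If $M$ is algebraic, Step 2 shows that the $y_j$ lie in $\FF_p(\varphi)$ and solve the system, so the sentence holds in $\FF_p(t)$.
\item Conversely, suppose the sentence holds in $\FF_p(t)$. We realize $M$ explicitly in $\ol{\FF_p}(\text{transcendentals})$ using the genuine group $\Aff$ with $\Phi=(s,a)\mapsto(s^p,a^p)$ and $\varphi=x$. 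The plane points are taken as the Evans--Hrushovski points $\alpha(g)+\beta(h)+\gamma(k)$ in the respective groups, and the rank-2 points as elements of $\Aff$ built from independent generics. We must check that all ranks of $M$ are attained, which amounts to a genericity computation.
\end{itemize}

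\textbf{Main obstacle.} The hardest part is specifying $M_0$ precisely: every hypothesis of \Cref{thm: Aff conf} and \Cref{thm: Ga transfer} must be encoded purely as rank conditions. In particular, the rank-2 point $x$ must be algebraic over $(x',a)$, and the circuits $x',\tilde b,b$ and $x',\tilde d,d$ must be imposed. Alongside this we must verify that the intended $\Aff$-realization satisfies exactly these ranks and no unintended dependencies. I would first write down the realization and read off $M_0$ as its matroid restricted to the listed points, which guarantees the converse automatically, and then confirm that the listed rank conditions suffice for the forward direction.
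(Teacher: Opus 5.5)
Your architecture matches the paper's: the configuration of \Cref{fig: Aff conf}, a von Staudt label $p$ on $w'$, transfer through \Cref{thm: Gm transfer}, \Cref{prop: Gm to Aff quasi-aut} and \Cref{thm: Ga transfer}, conjugacy of $\varphi$ to $x$, and \Cref{lemma: x centralizer}. There is, however, a genuine gap at the first bullet of your Step~2.

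\Cref{thm: Aff conf} assumes that $(a,b,c)$ is a group triple \emph{for $\Ga$}, and nothing in your $M_0$ forces this. Calling the red plane ``a $\Ga$-plane'' is not a rank condition, and the incidences of \Cref{fig: Aff conf} do not determine the red group. Here is an explicit realization where the red group is $\Gm$:
\begin{itemize}
\item take $G=\Gm\times\Gm$ with independent generics $x=(x_1,x_2)$, $y=(y_1,y_2)$, and $z=xy$;
\item set $x'=x_1$, $y'=y_1$, $z'=z_1$, $a=x_2$, and $c=z_2z_1^m$ with $m\neq 0$;
\item set $b=c/a$, $\tilde b=y_2y_1^m$, and $q=by_1^m=\tilde b z_1^m$.
\end{itemize}
Every circuit and algebraicity condition you list holds; for instance $b=\tilde b\,x_1^m$. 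Yet $(a,b,c)$ is a $\Gm$-triple and $G$ is commutative.

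In realizations of this kind the red skew field has characteristic $0$: it is $\QQ$, or an imaginary quadratic field or $B_{p,\infty}$ if an elliptic curve appears. Then $\varphi$ does not live in $F(x;\Frob)$ at all, and your von Staudt gadgets solve the system over the wrong ring. So the forward implication in Step~3 is unjustified. Reading $M_0$ off the intended $\Aff$-realization does not help unless the relevant red von Staudt points are among the points you list.

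The missing ingredient is the one the paper uses: add a von Staudt gadget in the red plane imposing $p=0$ in its quasi-automorphism skew field. Since $L_F(\Gm)=\QQ$ and the skew fields of elliptic curves have characteristic zero, this forces the red group to be $\Ga$.

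Likewise, you simply assert that $H$ is not of exponent $p$, which \Cref{thm: Aff conf} also needs. This does follow from your construction. The $p$-fold sum $1+\dots+1$ labels $w'$, so it must be a nonzero element of $L_F(H)$, which is impossible in $L_F(\Ga)$ since that has characteristic $p$. Alternatively it can be imposed explicitly as in the paper. Either way it should be argued.

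With both constraints built into $M_0$ (and into the intended realization you read the matroid from), the rest of your argument coincides with the paper's.
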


\begin{proof}
We begin by recalling the framework for the reduction which is pictured in \Cref{fig: Aff conf}. The figure contains three group configurations: a gray one of rank two, a blue one of rank one, and a red one also of rank one. Each gray node is composed of two rank-one points which span~it.
Some lines are omitted from this figure for clarity. Their absence is indicated by three stubs coming out of a node. These lines make (superimposed) group configurations (as in \Cref{fig: triple compact}) as necessary so that \Cref{thm: quasi-aut} applies and recovers quasi-automorphism labels for all the points of interest. This enables von~Staudt constructions in the red and blue configurations of rank one (the dashed lines accommodate more nodes for these constructions).

We may force the red group to be~$\Ga$ by requiring $p = 0$ inside its quasi-automorphism skew~field. Similarly, we can force the blue group to be non-$\Ga$, i.e., either $\Gm$ or an elliptic curve, by imposing $p \neq 0$ in its skew field. By~\Cref{thm: Aff conf} the purple gadget then ensures that the blue group is~$\Gm$ and the gray rank-two group is $G = \Gm \ltimes_\theta \Ga$ with a non-trivial action $\theta_s(\alpha) = s^k \alpha$, $k \neq 0$.

In the $\Gm$ configuration we construct the quasi-automorphism $p \in \QQ = L_F(\Gm)$ using von~Staudt constructions. Suppose that the blue point $w'$ is labeled by the quasi-automorphism~$p$. By using the construction from \Cref{thm: Gm transfer} we obtain a quasi-automorphism $\Phi$ of $G$ which labels the whole grey point~$w$ and a quasi-epimorphism $\Psi \le G \times \Gm$ such that $\Psi \circ \Phi \bumpeq p \circ \Psi$. Since $p$ acts on $\Gm$ like the Frobenius, \Cref{prop: Gm to Aff quasi-aut} shows that $\Phi(s,\alpha) \bumpeq (s^p, q_1\alpha^p + q_2 - \theta_{s^p}(q_2))$ for some $q_1 \in \Gm$ and $q_2 \in \Ga$.

Denote $K = [G,G] = \{\, (1,\alpha) \mid \alpha \in \Ga \,\}\simeq\Ga$.
\Cref{thm: Aff conf} yields a quasi-isomorphism $\Upsilon \le K \times \Ga$ with the red $\Ga$ configuration. The induced quasi-automorphism $\Phi_K = \Phi \cap (K \times K)$ is given by $\Phi_K(1,\alpha) \bumpeq (1, q_1\alpha^p)$. Thus, by identifying $K$ with the $\Ga$ group in its second coordinate, $\Phi_K$ becomes $q_1x \in F(x; \Frob) = L_F(\Ga)$; similarly we view $\Upsilon$ as a quasi-automorphism of~$\Ga$. By~\Cref{thm: Ga transfer} the element $\varphi = \Upsilon (q_1x) \Upsilon^{-1} \in L_F(\Ga)$ is a quasi-automorphism of the red $\Ga$ configuration labeling the red point~$d$.

Since $F$ is algebraically closed, there exists $u \in F$ with $u^{p-1} = q_1$ and hence $\varphi = (\Upsilon u^{-1}) x (u\Upsilon^{-1})$ is conjugate to~$x$. By \Cref{lemma: x centralizer}, $C(\varphi) = \FF_p(\varphi)$ is an existentially definable copy of the (usual, commutative,) univariate function field inside $F(x; \Frob)$ which we have given together with its distinguished generator~$\varphi$. Using further von~Staudt constructions in the red $\Ga$ configuration, we can encode arbitrary existential sentences about $\FF_p(\varphi)$ in the algebraicity of a matroid in characteristic~$p$. Undecidability then follows from \Cref{thm: Pheidas--Videla}.
\end{proof}
\begin{corollary}[Theorem 1.1(2)]
    The recognition problem for algebraic matroids is undecidable.
\end{corollary}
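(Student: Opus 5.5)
The plan is a many-one reduction from the characteristic-$p$ problem of \Cref{thm: main} to the unrestricted problem, which forces characteristic $p$ using only rank data. Fix a prime $p$; $p=2$ is the simplest choice, though any fixed prime works. Given a matroid $M$ from the reduction in the proof of \Cref{thm: main}, I will produce effectively a matroid $M'$. The requirement is that $M'$ is algebraic over some pair of fields if and only if $M$ is algebraic in characteristic $p$. Undecidability of the unrestricted problem then follows from that of the characteristic-$p$ problem.

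To build $M'$, I take $M$ and add to its red $\Ga$-plane (or to a separate group configuration, joined to it by a direct sum) a von~Staudt gadget that encodes the Diophantine condition $p=0$ in the coordinatizing skew field. This is exactly the gadget already used in the proof of \Cref{thm: main} to force the red group to be $\Ga$. The configuration realizes $1+1+\dots+1$ ($p$ summands) and requires the resulting point to coincide with the zero point $[0:1:0]$ in the Evans--Hrushovski coordinates of that plane. The key observation is that $p=0$ can hold in $L_F(G)$ only if the characteristic is $p$. In characteristic $0$, \Cref{prop: picking skew field}'s list and the remark after it show that every available skew field contains $\QQ$. In characteristic $\ell\neq p$, the skew fields are $\QQ$, imaginary quadratic fields, $B_{\ell,\infty}$, or $F(x;\Frob_\ell)$; in all of these $p$ is nonzero, since $p$ is invertible in characteristic zero and $p\neq 0$ in $F(x;\Frob_\ell)$, whose prime field is $\FF_\ell$. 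So every algebraic realization of $M'$ lives in characteristic $p$.

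Conversely, if $M$ is realized in characteristic $p$, then the red group in that realization is $\Ga$, and the added gadget points can be placed at the appropriate coordinates of the $\Ga$-plane $\Pi(\Ga\colon g,h,k)$. Here $p=0$ holds in $F(x;\Frob)$, so we get a realization of $M'$ in the same field extension. In fact, the construction in \Cref{thm: main} already contains this constraint for the red plane. Hence the reduction may simply be the identity map, provided we check that the matroids built in that proof force characteristic $p$ in every realization.

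The main obstacle is making sure this argument is sound for arbitrary characteristic. The Evans--Hrushovski coordinatization and the von~Staudt encoding must remain valid in characteristic $0$ and in characteristic $\ell\neq p$, so that a hypothetical realization there still forces the relation $p=0$ in some $L_F(G)$. This requires the planar coordinatization theorem and the gadget of \Cref{fig: endo labeling} to be characteristic-independent, which they are as cited from \cite{EH}. Given that, the contradiction follows from the case analysis of skew fields above.
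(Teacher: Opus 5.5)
Your proof is correct, but you obtain the characteristic restriction by a different route than the paper.

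\textbf{The paper's route.} It treats characteristic-forcing as a black box. It cites Lindstr\"om's family of matroids $N_p$, each algebraic only in characteristic $p$, and uses $M \mapsto M\oplus N_p$. This reduces characteristic-$p$ algebraicity to unrestricted algebraicity for \emph{every} input matroid $M$. It never needs to look inside the construction of \Cref{thm: main}.

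\textbf{Your route.} You derive the restriction from the Evans--Hrushovski machinery itself. The group configuration theorem and the coordinatization of \cite{EH} do not depend on the characteristic. So any realization of the red ``$p=0$'' gadget, in any characteristic, gives a one-dimensional $G$ with $p=0$ in $L_F(G)$. Your case check then forces characteristic $p$: the skew fields $\QQ$, $\QQ(\sqrt d)$, $B_{\ell,\infty}$, and $F$ in characteristic $0$ all have characteristic $0$, and $F(x;\Frob)$ in characteristic $\ell\neq p$ has characteristic $\ell$.

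\textbf{What each buys.} Your argument is self-contained; in effect you rebuild a Lindstr\"om-type $N_p$ (the Fano matroid when $p=2$). It also shows that the matroids of \Cref{thm: main} already force characteristic $p$, so the identity map is a reduction. The cost is that your reduction covers only those specific instances. That is enough for undecidability, but less general than the paper's. It also depends on checking that the von Staudt encoding is valid in every characteristic, including $0$.

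If you do add a separate gadget, use the direct-sum variant. Inserting new points into the red plane of $M$ would force you to specify the ranks of all mixed subsets with the rest of $M$; a direct sum avoids that.
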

\begin{proof}
    Recognition in fixed characteristic $p>0$ can be reduced to the general recognition problem: for each prime $p$ there exists an algebraic matroid $N_p$ which is algebraically realizable only in characteristic $p$; there is a family of such matroids $N_p$ for which the description is uniform in $p$ (\cite{LindstromChar}). Thus, $M$ is algebraic in characteristic $p$ if and only if $M\oplus N_p$ is algebraic.
\end{proof}

\section*{Acknowledgements}

\setlength{\intextsep}{5pt}%
\setlength{\columnsep}{5pt}%
\begin{wrapfigure}{R}{0.12\linewidth}
\vspace{-.5\baselineskip}%
\centering%
\href{https://doi.org/10.3030/101110545}{%
\includegraphics[width=0.9\linewidth]{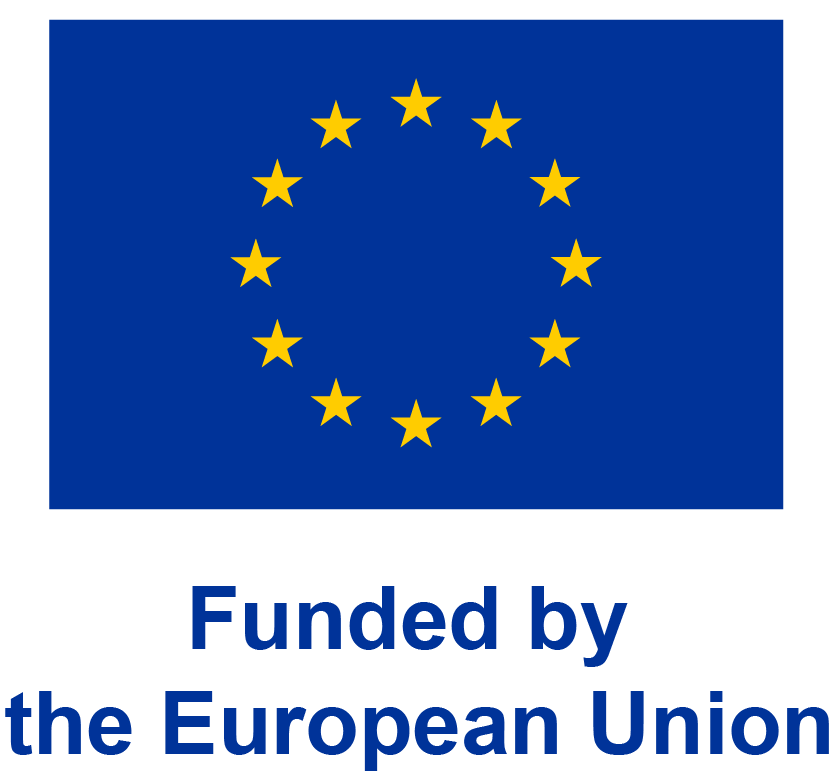}%
}
\end{wrapfigure}
T.B.~was funded by the European Union's Horizon 2020 research
and innovation programme under the Marie Skłodowska-Curie grant agreement
No.~101110545.

\hphantom{a} \\
\hphantom{a}

\bibliographystyle{preprint}
\bibliography{refs}

\end{document}